%LATEX
\documentclass[12pt,leqno]{amsart}
\usepackage{amssymb,amsfonts,amsmath,amsopn,amstext,amscd,latexsym,hyperref,mathrsfs,verbatim}
\usepackage{epsfig}
%\input psfig.sty
%\input xy
%\xyoption{all}

\allowdisplaybreaks[1]

\pagestyle{myheadings}
\markleft{On some properties of the functors... }
\markright{S. Orlik}

\oddsidemargin0cm \evensidemargin0cm \textwidth16cm
\setlength{\parindent}{0cm}

\theoremstyle{remark}
\newtheorem{para}{\bf}[subsection]

\newtheorem{rmk}[para]{\bf Remark}

\theoremstyle{definition}
\newtheorem{exam}[para]{\bf Example}

\newtheorem{dfn}[para]{\bf Definition}

\theoremstyle{plain}
\newtheorem{thm}[para]{\bf Theorem}
\newtheorem{lemma}[para]{\bf Lemma}

\newtheorem{cor}[para]{\bf Corollary}
\newtheorem{prop}[para]{\bf Proposition}

\newenvironment{numequation}{\addtocounter{para}{1}
\begin{equation}}{\end{equation}}

\newcommand{\bB}{{\bf B}}

\newcommand{\bG}{{\bf G}}

\newcommand{\bL}{{\bf L}}

\newcommand{\bP}{{\bf P}}

\newcommand{\bT}{{\bf T}}

\newcommand{\bbG}{{\mathbb G}}

\newcommand{\bbQ}{{\mathbb Q}}

\newcommand{\bbZ}{{\mathbb Z}}

\newcommand{\cF}{{\mathcal F}}
\newcommand{\cG}{{\mathcal G}}

\newcommand{\cI}{{\mathcal I}}

\newcommand{\cM}{{\mathcal M}}

\newcommand{\cO}{{\mathcal O}}

\newcommand{\frb}{{\mathfrak b}}

\newcommand{\frd}{{\mathfrak d}}

\newcommand{\frg}{{\mathfrak g}}

\newcommand{\frl}{{\mathfrak l}}

\newcommand{\frp}{{\mathfrak p}}
\newcommand{\frq}{{\mathfrak q}}

\newcommand{\frt}{{\mathfrak t}}
\newcommand{\fru}{{\mathfrak u}}

\newcommand{\frx}{{\mathfrak x}}

\newcommand{\frz}{{\mathfrak z}}

\newcommand{\uL}{{\underline L}}

\newcommand{\uN}{{\underline N}}

\newcommand{\Ad}{{\rm Ad}}

\newcommand{\diag}{{\rm diag}}

\newcommand{\Ext}{{\rm Ext}}
\newcommand{\GL}{{\rm GL}}
\newcommand{\Hom}{{\rm Hom}}
\newcommand{\hra}{\hookrightarrow}
\newcommand{\id}{{\rm id}}

\newcommand{\ind}{{\rm ind}}
\newcommand{\Ind}{{\rm Ind}}
\newcommand{\Lie}{{\rm Lie}}
\newcommand{\lra}{\longrightarrow}
\newcommand{\midc}{\;|\;}

\newcommand{\ob}{{\rm ob}}
\newcommand{\Qp}{{\bbQ}_p}
\newcommand{\ra}{\rightarrow}
\newcommand{\Rep}{{\rm Rep}}

\newcommand{\sub}{\subset}

\newcommand{\alg}{{\rm alg}}

\renewcommand{\qed}{{\hfill{\space} $\Box$}}

\begin{document}

\title[On some properties of the functors $\cF^G_P$]{On some properties of 
the functors $\cF^G_P$ from Lie algebra to locally analytic representations}
\author{Sascha Orlik}
\address{Fakult\"at f\"ur Mathematik und Naturwissenschaften, Bergische Universit\"at Wuppertal,
Gau{\ss}stra\ss{}e 20, D-42119 Wuppertal, Germany}
\email{orlik@uni-wuppertal.de}
%\author{Matthias Strauch}
%\address{Indiana University, Department of Mathematics, Rawles Hall, %Bloomington, IN 47405, USA}
%\email{mstrauch@indiana.edu}

\begin{abstract}
For a split reductive group $G$ over a finite extension $L$ of $\Qp$, and a 
parabolic subgroup $P \sub G$ we examine certain properties of the functors 
$\cF^G_P$ introduced  in \cite{OS2}. We discuss the aspects of faithfulness, 
projective and injective objects, Ext-groups and some kind of adjunction 
formula.
\end{abstract}

\maketitle

\tableofcontents

\section{Introduction}

This paper is a continuation of the work done in \cite{OS2}. In loc.cit. we 
constructed locally analytic representations in $K$-vector spaces of a 
$p$-adic reductive Lie group $G$ by introducing certain bi-functors 
$\cF^G_P:\cO^\frp_\alg \times \Rep^{\infty,a}_K(L_P)\to \Rep_K^{\rm loc. an.}(G)$. 
Here $P \sub G$ is a parabolic subgroup, $\frp = \Lie(P)$ its Lie algebra, 
and $\cO^\frp_\alg$ is a subcategory of the BGG-category\footnote{Deviating 
from the classical situation of modules over the enveloping algebra of a 
complex semisimple Lie algebra, we introduced in \cite{OS2} a version of 
this category which consists of modules over the enveloping algebra $U(\frg \otimes_L K)$.} $\cO^\frp$. 
Furthermore, $\Rep^{\infty,a}_K(L_P)$  is the 
category of smooth admissible representations of the Levi group $L_P$. We 
proved among others that these functors are exact in both arguments and gave 
a criterion for the irreducibility of those objects lying in the image of $\cF^G_P$. Using 
these properties one can derive a Jordan-H\"older series of any locally 
analytic representation $\cF^G_P(M,V)$ from the corresponding series of $M$ 
and $V$. 

\vskip8pt

In this paper we want to concentrate on properties of these functors for a 
split group $G$. We shall show that they behave fully faithful if the 
objects of the category $\cO^\frp$ are integral (i.e., they are contained
in the subcategory $\cO^\frp_\alg$ of modules such that all non-zero weight 
spaces belong to integral weights)  or generalized Verma modules. This
aspect has been considered by Morita in the case of $G={\rm SL}_2$, 
cf. \cite{Mo1,Mo2,Mo3,Mo4} and F{\'e}aux de Lacroix \cite{F}.
Concretely, we shall show:
\vskip8pt

{\bf Theorem 1.} {\it  For any $M_1, M_2 \in \cO^\frp_\alg$ the canonical  map
\begin{eqnarray*} \Hom_{\cO^\frp_\alg}(M_1,M_2) & \to &  \Hom_G(\cF^G_P(M_2),\cF^G_P(M_1)) \\
 f & \mapsto &  \cF^G_P(f) 
\end{eqnarray*}
is bijective (where $\cF^G_P(M):=\cF^G_P(M,{\bf 1})$ for the trivial $L_P$-representation ${\bf 1}$ ). }

\vskip8pt

To prove this statement we make use of the (naive) topological Jacquet functor of locally analytic representations
and more generally of an analogue of the Casselman-Jacquet functor $\cG^G_P:U \mapsto \varinjlim_k H^0(\fru_P^k,U')$
which behaves almost like a section for $\cF^G_P$. This 
topic  is a continuation of the theory  started  by Schraen and the author respectively  Breuil  \cite{OSch, Br}. 

\vskip8pt
By the above theorem
we can characterize projective and injective objects which lie in the essential image $\cF^P_\alg$ of the 
functor $\cF^G_P:\cO^\frp_\alg \to \Rep_K^{\rm loc. an.}(G)$. More precisely, it follows that  $M\in \cO^{\frp}_\alg$ is projective (resp. injective) as an object in 
$\cO^\frp$  if and only if $\cF^G_P(M)$ is injective
(resp. projective) in $\cF^P_\alg.$ Hence if we denote for a given integer $i\geq 0$, by $\Ext^i_{\cF^P_\alg}$ the corresponding Ext-group 
then the  natural morphism
\begin{eqnarray*} \Ext^i_{\cO^\frp_\alg}(M_1,M_2) & \to &  \Ext^i_{\cF^P_\alg}(\cF^G_P(M_2),\cF^G_P(M_1))
 \end{eqnarray*}
 is bijective. These Ext-groups are of course different from those considered more generally in the category of locally analytic 
$G$-representations, cf. \cite{K2}. These can be seen as an analogue of relating the groups $\Ext^i_{\frg}(M_1,M_2)$
and $\Ext^i_{\cO}(M_1,M_2)$ for two objects $M_1,M_2 \in \cO.$

For considering also smooth contributions in this context, we extend $\cF^G_P$ to a bi-functor 
$\cF^G_P:\cO^\frp_\alg \times \Rep^{\infty,\infty}_K(L_P)\to \Rep_K^{\rm loc. an.}(G)$ where $\Rep^{\infty,\infty}_K(L_P)$
denotes the category of smooth $L_P$-representations of countable dimension. The latter object has enough injectives and 
projectives. We let $^\infty\overline{\cF^P}$ be the smallest abelian subcategory of $\Rep_K^{\rm loc. an.}(G)$ 
which contains the essential images of all bi-functors $\cF^G_Q$ with  $Q\supset P.$
It turns out that  $^\infty\overline{\cF^P}$ has enough injective and projective objects.
 More precisely, we deduce this fact from the following statement.
 
\vskip8pt 
 
{\bf Theorem 2:} {\it Let $M\in \cO^{\frp}_\alg$ be a projective (resp. injective)  object  and let $V$
be an injective (resp. projective) smooth $L_P$-representation of countable dimension. Then  $\cF^G_P(M,V)$ is injective (resp. projective) 
in $^\infty\overline{\cF^P}.$}
\vskip8pt

As an application we are able to determine
extensions of generalized Steinberg representations in the category $^\infty\overline{\cF^B}$. For a parabolic subgroup 
$P\sub G$ the associated representation is given by the quotient 
$V^G_P= \Ind^G_P(\bf 1) / \sum_{Q \supsetneq P} \Ind^G_Q(\bf 1)$ where $\Ind^G_P({\bf 1})$ is the locally
analytic induction with respect to the trivial $P$-representation ${\bf 1}.$ For a subset 
$I\sub \Delta$ of a fixed set of simple roots, let $P_I$ be the corresponding standard parabolic subgroup.
The next result has the same structure as in the smooth setting \cite{Da,O1}.

\vskip8pt

\noindent {\bf Theorem 3:} Let $G$ be semi-simple. {\it Let $I,J \sub \Delta$. Then

$$\Ext_{^\infty\overline{\cF^B}}^i(V^G_{P_I}, V^G_{P_J})=\left\{ \begin{array}{cc}
                                   K \;; & | I \cup J \setminus I\cap J| =i \\
                                  (0) \;; & \mbox{otherwise}
                                \end{array} \right. .
 $$}

 Finally we deduce from the naive Jacquet functor applied to different Borel subgroups lying in the same
 apartment an adjunction formula (in the sense of Bernstein). Let $U_B$ be the unipotent radical of a fixed Borel 
 subgroup $B.$ If we denote for a given $G$-representation $V$ by $V_{U_B}$ its (naive) topological Jacquet module
 then the map below is defined as follows: For an element $f$  of the LHS, the corresponding element on the RHS is
 given by the composition of the inclusion
 $((w_0\cdot_{\overline{B}}\chi)^{-1})^{w_0} \hookrightarrow \Ind^G_{\overline{B}}(\chi^{-1})_{U_B}$
 with the map $f_{U_B}: \Ind^G_{\overline{B}}(\chi^{-1})_{U_B} \to \cF^G_{B^w}(M)_{U_B}.$

 \vskip8pt
 
{\bf Theorem 4:} {\it  Let $\chi$ be a dominant algebraic character of $T$ and let $M \in \cO^{\frb^w}_\alg$ be a highest weight module. Then
 
 $$\Hom_G(\Ind^G_{\overline{B}}(\chi^{-1}),\cF^G_{B^w}(M))=
 \Hom_T(((w_0\cdot_{\overline{B}}\chi)^{-1})^{w_0},\cF^G_{B^w}(M)_{U_B}) \;.$$}

\vskip8pt

{\it Acknowledgments.} I am very grateful to Matthias Strauch for all his criticism and his  contributions to this paper since the very beginning. Here I also thank  Indiana University for hospitality where parts of the paper were created.  I am  greatly indebted to Florian Herzig who carefully read a previous version of this paper and pointed out several inaccurate statements and provided very helpful suggestions.
I thank Volodymyr Mazorchuk for his answer on a mathematical question concerning the category $\cO.$ Finally I thank Christophe Breuil for his steady interest in this paper.
This research was conducted in the framework of the research training group
\emph{GRK 2240: Algebro-Geometric Methods in Algebra, Arithmetic and Topology}, which is funded by the DFG. 
\vskip8pt

{\it Notation and conventions.} We denote by $p$ a prime number and consider fields $L \sub K$ which are both finite extensions of $\Qp$. 
Let $O_L$ and $O_K$ be the rings of integers of $L$, resp. $K$, and let $|\cdot |_K$ be the absolute value on $K$ such that $|p|_K = p^{-1}$. The field $L$ is our ''base field'', whereas we consider $K$ as our ''coefficient field''. For a locally convex $K$-vector space $V$ we denote by $V'_b$ its strong dual, i.e., the $K$-vector space of continuous linear forms equipped with the strong topology of bounded convergence. Sometimes, in particular when $V$ is finite-dimensional, we simplify notation and write $V'$ instead of $V'_b$. All finite-dimensional $K$-vector spaces are equipped with the unique Hausdorff locally convex topology.

\vskip8pt

We let $\bG_0$ be a split reductive group scheme over $O_L$ and $\bT_0 \sub \bB_0 \sub \bG_0$ a maximal split torus and a 
Borel subgroup scheme, respectively. We denote by $\bG$, $\bB$, $\bT$ the base change of $\bG_0$, $\bB_0$ and $\bT_0$ 
to $L$. By $G_0 = \bG_0(O_L)$, $B_0 = \bB_0(O_L)$, etc., and $G = \bG(L)$, $B = \bB(L)$, etc., we denote the 
corresponding groups of $O_L$-valued points and $L$-valued points, respectively. Standard parabolic subgroups of 
$\bG$ (resp. $G$) are those which contain $\bB$ (resp. $B$). For each standard parabolic subgroup $\bP$ (or $P$) we 
let $\bL_\bP$ (or $L_P$) be the unique Levi subgroup which contains $\bT$ (resp. $T$) and ${\bf U_P}$ (or $U_P$) its unipotent radical. 
Finally, Gothic letters $\frg$, $\frp$, etc., will denote the Lie algebras of $\bG$, $\bP$, etc.: 
$\frg = \Lie(\bG)$, $\frt = \Lie(\bT)$, $\frb = \Lie(\bB)$, $\frp = \Lie(\bP)$, $\frl_P = \Lie(\bL_\bP)$, etc.. 
Base change to $K$ is usually denoted by the subscript ${}_K$, for instance, $\frg_K = \frg \otimes_L K$. 

\vskip8pt

We make the general convention that we denote by $U(\frg)$, $U(\frp)$, etc., 
the corresponding enveloping algebras, {\it after base change to K}, i.e., 
what would be usually denoted by 
$U(\frg) \otimes_L K$, $U(\frp) \otimes_L K$, and so on. All distribution 
algebras appearing in this paper are tacitly assumed to be distribution 
algebras with coefficient field $K$, and we write $D(H)$ for the 
distribution algebra $D(H,K)$.
For a real number $r<1$ with $r\in p^{\bbQ}$, and $H$ compact we let $D_r(H)$ be the Banach space completion in 
the sense of \cite{ST2} so that $D(H)=\varprojlim_r D_r(H).$
Finally, $\Rep^{\rm loc. an.}_K(G)$ denotes 
the category of locally analytic representations of $G$ on barreled locally 
convex Hausdorff $K$-vector spaces.

\vspace{0.5cm}

\section{A review of the categories \texorpdfstring{$\cO^\frp_\alg$}{} and the functors \texorpdfstring{$\cF^G_P$}{}}

For the convenience of the reader we recall here the definitions of the categories $\cO^\frp_\alg$, as well the functors $\cF^G_P$, and state some of the key results about those, cf. \cite{OS2}.

\subsection{The categories \texorpdfstring{$\cO^\frp$}{} and \texorpdfstring{$\cO^\frp_\alg$}{}}

For a parabolic subalgebra $\frp \sub \frg$ we define $\cO^\frp$ to be the full subcategory of the category of all $U(\frg)$-modules consisting of objects $M$ which possess the following properties:

\begin{enumerate}
\item $M$ is a finitely generated $U(\frg)$-module.

\item $M$ decomposes as a direct sum of one-dimensional $\frt_K$-modules.

\item The action of $\frp$ on $M$ is locally finite, i.e., for every $m \in M$ the $K$-subspace $U(\frp).m \sub M$ is finite-dimensional. 
\end{enumerate}

\medskip
We also put $\cO = \cO^\frb$. Given $M \in \ob(\cO)$ and $\lambda \in \frt_K^*$ we set 
$$M_\lambda = \{m \in M \midc \forall \frx \in \frt: \frx.m = \lambda(\frx)m \} \;.$$  
We call $\lambda \in \frt_K^*$ {\it algebraic} if it is in the image of the 
canonical homomorphism 
$$X^*(\bT) = \Hom_{\rm alg. \hskip2pt gps}(\bT,\bbG_{m,L}) \lra \frt_K^* \;, \;\; \chi \mapsto {\rm d}\chi \;.$$

We define $\cO^\frp_\alg$ to be the full subcategory of $\cO^\frp$ which consists of $M \in \ob(\cO^\frp)$ such that $M_\lambda \neq 0$ implies that $\lambda$ is algebraic. 

\subsection{The functors \texorpdfstring{$\cF^G_P$}{}} \setcounter{enumi}{0}

We consider an object $M$ of the category $\cO^\frp_\alg$. By \cite[3.2]{OS2}, the locally finite action of $\frp$ on $M$ lifts canonically to a locally finite algebraic action of the algebraic group $\bP_K$. Since $M$ is finitely generated over $U(\frg)$, we can choose a $\bP_K$-subrepresentation $W \sub M$ which generates $M$ as a $U(\frg)$-module. Thus we get an exact sequence
\begin{numequation}\label{display-frd} 0 \lra \frd \lra U(\frg) \otimes_{U(\frp)} W \lra M \lra 0 \;
\end{numequation}
Let $\Ind^G_P(W')$ be the locally analytic induction of the dual space $W'.$  There is a pairing

$$
\begin{array}{rccc}
\langle \cdot , \cdot \rangle_{C^{an}(G,K)}: & \left(D(G) \otimes_{D(P)} W \right) \otimes_K \Ind^G_P(W') & \lra & C^{an}(G,K) \\
&&&\\
& (\delta \otimes w) \otimes f & \mapsto & \Big[ g \mapsto \delta(x \mapsto f(gx)(w))\Big]
\end{array}$$
which extends for any smooth admissible $L_P$ representation, to a pairing

\begin{numequation}\label{display-pairing} \langle \cdot , \cdot \rangle_{C^{an}(G,V)}: \left(D(G) \otimes_{D(P)} (W\otimes_K V') \right) 
\otimes_K \Ind^G_P(W'\otimes_K V)  \lra  C^{an}(G,K) \;.
\end{numequation}

Here and in the following we always equip an admissible smooth representation $V$ with the finest locally convex topology (the final topology with respect to which all inclusion maps $V_1 \hra V$, where $V_1 \sub V$ is finite-dimensional, are continuous, cf. \cite[ch. I, \S 5, E]{S1}). As $W$ is finite-dimensional, the topology of the inductive tensor product $W \otimes_{K,\iota} V'$ coincides with that of the projective tensor product $W \otimes_{K,\pi} V'$, and we thus write simply $W \otimes_K V'$ for this topological vector space (cf. \cite[\S 17]{S1} for tensor products of topological vector spaces and the notation used here). We set
$$\Ind^G_P(W' \otimes_K V)^\frd = \{f \in \Ind^G_P(W' \otimes_K V) \midc \forall  \frz \in \frd: \langle \frz, f \rangle_{C^{an}(G,V)} = 0 \} \;.$$
Then $\Ind^G_P(W' \otimes_K V)$ carries the structure of a $K$-vector space of compact type, and it is easily seen that the linear maps $\Ind^G_P(W' \otimes_K V) \ra C^{an}(G,K)$ defined by the pairing (\ref{display-pairing}) are continuous. Hence $\Ind^G_P(W' \otimes_K V)^\frd$ is a closed subspace of $\Ind^G_P(W' \otimes_K V)$, and we equip it with its subspace topology. As such it is again of compact type \cite[1.2]{ST1}. By \cite[2.1.2]{Em1}, $\Ind^G_P(W' \otimes_K V)$ is an admissible locally analytic representation of $G$, and $\Ind^G_P(W' \otimes_K V)^\frd$ is thus again an admissible locally analytic representation of $G$, cf. \cite[6.4]{ST2}.

\vskip8pt
If $W_1 \sub W_2$ are two finite-dimensional $\frp_K$-stable subspaces which generate $M$ as a $U(\frg)$-module, then there is a canonical continuous morphism of $G$-representations 
\begin{numequation}\label{display-independence_W} \Ind^G_P(W_2' \otimes_K V)^{\frd_2} \lra \Ind^G_P(W_1' \otimes_K V)^{\frd_1} \;,
\end{numequation}
where $\frd_i$ is defined by the corresponding exact sequence \ref{display-frd}. By \cite[4.5]{OS2} the map \ref{display-independence_W} is actually an isomorphism of topological vector spaces. We thus see that the formation of $\Ind^G_P(W' \otimes_K V)^\frd$ is independent of the choice of $W$, and we put
\begin{numequation}\label{display-defF}
 \begin{array}{rcl} \cF^G_P(M,V) & = & \Ind^G_P(W' \otimes_K V)^\frd \;.
\end{array}
\end{numequation}

Denote by $\Rep^{\infty,a}_K(L_P)$ the category of smooth admissible representations of $L_P$ and by $\Rep^{\rm loc. an.}_K(G)$ the category of locally analytic representations of $G$ on $K$-vector spaces. Then we have a   bi-functor
\begin{numequation}\label{display-intro_FGP} \cF^G_P: \cO^\frp_\alg \times \Rep^{\infty,a}_K(L_P) \lra \Rep^{\rm loc. an.}_K(G) \;.
\end{numequation}
If $V={\bf 1}$ denotes the trivial representation, then we simply write $\cF^G_P(M)$ for $\cF^G_P(M,V)$. 

\vskip8pt
\begin{thm}\label{theorem_mainOS2} 

{\rm \cite[4.9, 5.3]{OS2}}

a) The bi-functor $\cF^G_P$ is exact in both arguments.

\vskip8pt

b) (PQ-formula) If $Q \supset P$ is a parabolic subgroup, $\frq = \Lie(Q)$, and $M$ an object of $\cO^\frq_\alg$, then

$$\cF^G_P(M,V) = \cF^G_Q(M,i^{L_Q}_{L_P(L_Q \cap U_P)}(V)) \;,$$

\vskip8pt

where $i^{L_Q}_{L_P(L_Q \cap U_P)}(V)=i^Q_P(V)=\ind^Q_P(V)$ denotes the corresponding induced representation in the category of smooth representations.

\vskip8pt

c) Suppose $M \in \cO^\frp_\alg$ is simple and that 
$\frp$ is maximal for $M$ (i.e., if $M \in \cO^\frq$ with $\frq \supset \frp$, then $\frq = \frp$). Let $V$ be a smooth and irreducible $L_P$-representation. Then $\cF^G_P(M,V)$ is topologically irreducible as a $G$-representation\footnote{Here we assume that if the root system 
$\Phi = \Phi(\frg,\frt)$ has irreducible components of type $B$, $C$ or 
$F_4$, then $p > 2$, and if $\Phi$ has irreducible components of type $G_2$, 
we assume that $p > 3$.}.
\end{thm}

\subsection{A description of the dual space of \texorpdfstring{$\cF^G_P(M)$}{}}

By \cite[3.2]{ST1}, $M$ carries a canonical structure of a module over the locally analytic distribution algebra $D(P) = D(P,K)$. Let $D(\frg,P)$ be the subring of $D(G)$ generated by $U(\frg)$ and $D(P)$ inside $D(G)$. By \cite[3.6]{OS2}, the $U(\frg)$-module structure on $M$ and the $D(P)$-module structure on $M$ agree on the subring $U(\frp)$, and there is a unique structure of a module over $D(\frg,P)$ on $M$ which extends these module structures. By \cite[3.7]{OS2} there is a canonical isomorphism of $D(G)$-modules
\begin{numequation} \cF^G_P(M)' \; \cong \; D(G) \otimes_{D(\frg,P)} M  \;.
\end{numequation}

\vspace{0.5cm}

\section{Jacquet functors} %\setcounter{subsection}{1}

\subsection{Jacquet module for irreducible objects $\cF^G_P(M,V)$}

The first part of this section deals with a  r\'esum\'e of results 
formulated in \cite{OSch,Br}, where the Jacquet functor of simple objects 
$\cF^G_P(M,V)$ with $M\in \cO^\frp_\alg$ was discussed. 
%Here we consider also more generally non-simple objects in $\cO_\alg$.

\vskip8pt

Let $P$ be a parabolic subgroup of $G$ with Levi decomposition $P=L_PU_P.$
For a locally analytic $P$-representation $V$, let $V(U_P)$ be the subspace generated 
 by the expressions $uv-v,$ with $u\in U_P, v\in V$ and let $\overline{V(U_P)}$ be
 its topological closure which is a $P$-stable subspace of $V$. Denote by 
$$\overline{H}_0(U_P,V):=V_{U_P}:=V/\overline{V(U_P)}$$ the corresponding quotient 
(the naive topological Jacquet module). It is the
largest Hausdorff quotient of $V$ on which $U_P$ acts trivially.

\begin{lemma}
 The space $\overline{H}_0(U_P,V)$ has  the canonical structure of a locally analytic $P$-representation.
\end{lemma}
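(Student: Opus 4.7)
The plan is to verify three things in turn: that $\overline{V(U_P)}$ is a $P$-stable closed subspace, that the quotient topology on $V/\overline{V(U_P)}$ is of the right type (Hausdorff, locally convex, barrelled), and that the induced $P$-action is locally analytic.

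First I would check $P$-stability of $V(U_P)$ itself. For $p\in P$, $u\in U_P$ and $v\in V$ one has $p(uv-v)=(pup^{-1})(pv)-pv$, and $pup^{-1}\in U_P$ since $U_P$ is normal in $P$. Hence $p\cdot V(U_P)\subset V(U_P)$. Since the action of each $p\in P$ on $V$ is by a continuous linear automorphism (a consequence of local analyticity of the action on the barrelled $V$), the closure $\overline{V(U_P)}$ is $P$-stable as well. The quotient $V/\overline{V(U_P)}$ with its quotient topology is then a locally convex Hausdorff $K$-vector space, and it inherits barrelledness from $V$ because quotients of barrelled spaces by closed subspaces are barrelled. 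The $P$-action on $V$ descends to an action on the quotient, and this action is separately continuous (in fact jointly continuous on compact open subgroups) by passing continuity of the action on $V$ to the quotient.

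The remaining point is local analyticity. Denote by $\pi\colon V\to V/\overline{V(U_P)}$ the (continuous, $P$-equivariant, $K$-linear) quotient map. For $v\in V$, the orbit map $o_v\colon P\to V$, $p\mapsto pv$, is locally analytic by hypothesis. Composition with the continuous linear map $\pi$ yields
\[
\pi\circ o_v\colon P\lra V/\overline{V(U_P)},\qquad p\longmapsto p\cdot\pi(v),
\]
which is the orbit map of $\pi(v)$. Since the composition of a locally analytic map with a continuous $K$-linear map is locally analytic, every orbit map on the quotient is locally analytic, so the $P$-representation on $V/\overline{V(U_P)}$ is locally analytic.

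None of the steps should present a real obstacle; the only point that might require a brief remark is the stability of $\overline{V(U_P)}$ under $P$, which rests on the continuity (not merely algebraic nature) of the $P$-action and the normality of $U_P$ in $P$. Once this is observed, the construction of the quotient topology and the passage of local analyticity through a continuous linear map are both formal.
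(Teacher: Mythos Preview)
Your proof is correct and follows essentially the same approach as the paper: both arguments reduce to observing that the quotient by the closed $P$-stable subspace $\overline{V(U_P)}$ is Hausdorff and that the orbit maps on the quotient are locally analytic because they are obtained by composing the locally analytic orbit maps on $V$ with the continuous linear projection. You supply more detail than the paper (explicitly checking $P$-stability via normality of $U_P$ and noting barrelledness of the quotient), but the logical skeleton is identical.
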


\begin{proof} Since $\overline{V(U_P)}$  is a closed subspace of $V$, the quotient is a barreled locally convex Hausdorff vector space.
 Moreover the orbit maps $P \to \overline{H}_0(U_P,V)$  are clearly locally analytic since these are induced
 by the locally analytic orbit maps $P \to V.$
\end{proof}

On the other hand, if $V$ is of compact type then its dual $V'$ is a $K$-Fr\'echet space
equipped with a continuous and locally analytic action of $P$. We let $H^0(U_P,V')$ be the subspace of $V'$ consisting of vectors which are fixed by $U_P.$ This is a closed subspace so that
$H^0(U_P,V')$ inherits the structure of a $K$-Fr\'echet space equipped with an action of $P$, as well.
Since the action of $P$ is locally analytic we can define the subspace 
$$H^0(\fru_P,V')=\{w \in V' \mid x\cdot w=0, \forall x \in \fru_P\}$$ and 
the Hausdorff quotient $\overline{H}_0(\fru_P,V)=V/\overline{\fru_P V}$, as well. Then  $H^0(\fru_P,V')$ is by the continuity of the $\frp$-action a closed $P$-equivariant subspace  of $V'$ with
$H^0(U_P,V')\subset H^0(\fru_P,V') .$

\begin{lemma}\label{duality}
Let $V$ be of compact type.  Under the duality pairing  $V \times V' \to K$ the subspace $H^0(U_P,V')$  (resp. $H^0(\fru_P,V')$) is the topological dual of
 $\overline{H}_0(U_P,V)$ (resp. $\overline{H}_0(\fru_P,V))$ as $P$-representations.
\end{lemma}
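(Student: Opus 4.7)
The plan is to identify both $H^0(U_P,V')$ and the topological dual of $\overline{H}_0(U_P,V)=V/\overline{V(U_P)}$ with one and the same closed subspace of $V'$, namely the annihilator $\overline{V(U_P)}^\perp$, and then observe that this common identification is compatible with the $P$-actions.

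First I would verify that $H^0(U_P,V') = \overline{V(U_P)}^\perp$. Under the contragredient $P$-action $(p\cdot\ell)(v)=\ell(p^{-1}v)$, a functional $\ell\in V'$ satisfies $u\cdot\ell=\ell$ for some $u\in U_P$ iff $\ell(u^{-1}v)=\ell(v)$ for every $v\in V$; running $u$ over all of $U_P$, this says that $\ell$ vanishes on every generator $uv-v$ of $V(U_P)$, i.e. $\ell|_{V(U_P)}=0$. Since $\ell$ is continuous, this extends automatically to $\ell|_{\overline{V(U_P)}}=0$, giving the claimed equality.

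Second I would invoke the standard duality for closed subspaces and Hausdorff quotients in the category of compact type $K$-vector spaces. Since $V$ is of compact type and $\overline{V(U_P)}$ is closed in $V$, the previous lemma shows that $V/\overline{V(U_P)}$ is again of compact type, and the transpose of the projection $V \twoheadrightarrow V/\overline{V(U_P)}$ yields a topological isomorphism of Fr\'echet spaces
$$\bigl(V/\overline{V(U_P)}\bigr)'_b \;\xrightarrow{\;\sim\;}\; \overline{V(U_P)}^\perp \;\subset\; V'_b,$$
whose inverse sends an annihilating functional to the functional it induces on the quotient via the universal property.

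Finally I would check $P$-equivariance: the projection $V\twoheadrightarrow V/\overline{V(U_P)}$ is $P$-equivariant (because $\overline{V(U_P)}$ is $P$-stable, as $U_P$ is normal in $P$), so its transpose intertwines the contragredient actions on both sides. I do not expect a serious obstacle here; the only point requiring some care is to ensure that the identifications respect the \emph{topologies} (not merely the abstract $P$-module structures) and that $\overline{V(U_P)}$ is genuinely a $P$-stable closed subspace, so that the quotient inherits its locally analytic $P$-structure and the annihilator inherits the contragredient action of $P$ on $V'$.
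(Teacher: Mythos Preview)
The paper states this lemma without proof, so there is nothing to compare against; your argument is correct and is exactly the standard one (identify both sides with the annihilator $\overline{V(U_P)}^\perp$ inside $V'$, then use that for spaces of compact type the strong dual of a Hausdorff quotient by a closed subspace is the annihilator of that subspace). The only implicit hypothesis you are using is that $V$ is of compact type, which the paper also tacitly assumes, as is visible from the proof of the preceding lemma.
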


Let $Q$ be another  parabolic subgroup with $P\sub Q$ and let $Q=L_Q\cdot U_Q$ be its Levi decomposition. 
In this sequel we want to determine for certain objects $M\in \cO^{\frq}_\alg$ and smooth admissible $L_Q$-representations
$V$  the $L_P$-representations $H^0(U_P,\cF_Q^G(M,V)')$.

\vskip8pt
For a compact open subgroup $H\subset G_0$, let $P_H=H \cap P$ and let $D(P_H,\frg) \subset D(H)$ be the subring generated by
$\frg$ and $P_H$. Moreover, let $D_r(P_H,\frg)$ be the Banach space completion of $D(P_H,\frg)$  inside $D_r(H)$ and set $M_r:=D_r(P_H,\frg) \otimes_{D(P_H,\frg)} M.$
\begin{prop}\label{non-trivial_intersection}
Let $M$ be an object of $\cO^\frp_\alg$. We have an inclusion preserving bijection
\begin{eqnarray*}
\Big\{\mbox{closed $U(\frl_P)$-invariant subspaces of $M_r$} \Big\} &
\stackrel{\sim}{\longrightarrow} &
\Big\{\mbox{$U(\frl_P)$-invariant subspaces of $M$}\Big\}. \\
S & \longmapsto & S\cap M
\end{eqnarray*}
The inverse map is induced by taking the closure. 
\end{prop}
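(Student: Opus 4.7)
The plan is to exploit the $\frt$-weight decomposition of $M$ and to show it extends compatibly to $M_r$. Since $\uM \in \cO^P$ the underlying module $\omega(\uM) = M$ lies in $\cO^\frp$ and hence admits a decomposition $M = \bigoplus_{\lambda \in \frt_K^*} M_\lambda$ into finite-dimensional $\frt$-weight spaces. Because $\frt \sub \frl_P$, every $U(\frl_P)$-invariant subspace $T \sub M$ is automatically $\frt$-stable and decomposes as $T = \bigoplus_\lambda T_\lambda$ with $T_\lambda = T \cap M_\lambda$; conversely, a collection $\{T_\lambda \sub M_\lambda\}_\lambda$ assembles to a $U(\frl_P)$-invariant subspace precisely when it is stable under the root-vector operators $e_\alpha \colon M_\lambda \to M_{\lambda+\alpha}$ for all roots $\alpha$ of $\frl_P$.

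The central structural step is to establish that $M_r$ is the Banach direct sum $\widehat{\bigoplus}_\lambda M_\lambda$, so that the $\frt$-weight spaces of $M_r$ coincide with those of $M$ and every vector in $M_r$ admits a unique weight-space expansion converging in the $r$-norm. One obtains this from the fact that the compact torus $T_0 \sub P$ acts continuously on $M_r$ through $D_r(T_0) \sub D_r(P,\frg)$, together with the observation that projection onto an individual weight space can be realized by integrating against the corresponding character of $T_0$---a bounded operation in the Banach norm. Since each $M_\lambda$ is finite-dimensional it is automatically closed in $M_r$, and this forces the $\lambda$-eigenspace of $M_r$ to coincide with $M_\lambda$.

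Granting this, the bijection is formal. For a closed $U(\frl_P)$-invariant subspace $S \sub M_r$, the $\frt$-stability yields $S = \widehat{\bigoplus}_\lambda S_\lambda$ with $S_\lambda := S \cap M_\lambda \sub M$; hence $T := S \cap M = \bigoplus_\lambda S_\lambda$, which is $U(\frl_P)$-invariant because the root-vector condition $e_\alpha S_\lambda \sub S_{\lambda+\alpha}$ transfers verbatim between $S$ and its dense subspace $T$. This algebraic direct sum is dense in $S$, so $S$ equals the closure of $T$. Conversely, if $T \sub M$ is $U(\frl_P)$-invariant with decomposition $T = \bigoplus_\lambda T_\lambda$, then $\overline{T} = \widehat{\bigoplus}_\lambda T_\lambda$ is a closed $U(\frl_P)$-invariant subspace of $M_r$ satisfying $\overline{T} \cap M = T$. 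Inclusion-preservation is built into the construction.

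The main obstacle will be the weight-space decomposition of $M_r$ itself, i.e.\ showing that passage to the Banach completion $M_r = D_r(P,\frg)M$ does not introduce new $\frt$-weight vectors beyond those already in $M$, and that every element of $M_r$ is the Banach limit of its weight-space series. This is genuinely a statement about the fine structure of $D_r(P,\frg)$ and its compatibility with the torus action, and is presumably handled by the explicit description of $D_r(P,\frg)$ and its Mahler-type basis developed in \cite{OS2,OS3}.
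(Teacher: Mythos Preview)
Your approach is essentially the same as the paper's, though you unpack more of the machinery. The paper's proof simply invokes the $U(\frt)$-version of the bijection from \cite{OSch} as a black box, and then reduces the $U(\frl_P)$-statement to it by a one-line density argument: a closed $U(\frt)$-stable subspace $N \sub M_r$ is $U(\frl_P)$-stable if and only if $N \cap M$ is, since one direction is obvious and the other follows because $N \cap M$ is dense in $N$ and the $\frl_P$-action is continuous. What you identify as the ``main obstacle''---that the weight-space decomposition of $M$ extends to a Banach direct-sum decomposition $M_r = \widehat{\bigoplus}_\lambda M_\lambda$ with no new weight vectors---is precisely the content of the cited result from \cite{OSch}, so your instinct about where the real work lies is correct.
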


\begin{proof}
 By \cite{OSch} we have such an inclusion preserving bijection
\begin{eqnarray*}
\Big\{\mbox{closed $U(\frt)$-invariant subspaces of $M_r$} \Big\} &
\stackrel{\sim}{\longrightarrow} &
\Big\{\mbox{$U(\frt)$-invariant subspaces of $M$}\Big\}. \\
S & \longmapsto & S\cap M
\end{eqnarray*}
for $U(\frt)$-submodules.
But for a closed $U(\frt)$-submodule $N \sub M_r$ the intersection $N \cap M$ is $\frl_P$-stable if and only if $N$ is 
$U(\frl_P)$-stable. Indeed, whereas one direction is obvious the other one follows by density arguments. The claim follows.
\end{proof}

Recall that if $M$ is a Lie algebra representation of $\frg,$
then $H^0(\fru_Q,M)=\{m\in M \mid \frx\cdot m=0 \;  \forall\; \frx \in \fru_Q\}$ denotes the subspace of vectors 
killed by $\fru_Q.$ 

\begin{cor} \label{ncohomology} Let $M$ be an object of $\cO^\frp_\alg.$ 
Then $H^0(\fru_P,M_r)=H^0(\fru_P, M).$ In particular, $H^0(\fru_P,M_r)$  is finite-dimensional.
\end{cor}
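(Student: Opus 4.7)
The plan is to apply Proposition \ref{non-trivial_intersection} to the subspace $N := H^0(\fru_P, M_r) \sub M_r$, identifying it with the closure of $H^0(\fru_P, M)$; the remaining task is to verify that this closure is superfluous because $H^0(\fru_P, M)$ is already finite-dimensional, hence closed.

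First I would check that $N$ is closed and $U(\frl_P)$-invariant. Closedness is immediate, since $N$ is the intersection of the kernels of the continuous operators $m \mapsto x \cdot m$ for $x \in \fru_P$. For $\frl_P$-invariance: $\fru_P$ is an ideal in $\frp$, so for any $x \in \fru_P$, $\ell \in \frl_P$ and $n \in N$ one has $x(\ell n) = [x,\ell] n + \ell (x n) = [x,\ell] n = 0$ since $[x,\ell] \in \fru_P$. Invariance under all of $U(\frl_P)$ then follows by induction on PBW-degree. Proposition \ref{non-trivial_intersection} now yields $N = \overline{N \cap M}$, and plainly $N \cap M = M^{\fru_P} = H^0(\fru_P, M)$, so $N = \overline{H^0(\fru_P, M)}$.

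It remains to show $H^0(\fru_P, M)$ is finite-dimensional, from which the rest is immediate: any finite-dimensional subspace of a Hausdorff locally convex $K$-vector space is closed, so $N = \overline{H^0(\fru_P,M)} = H^0(\fru_P,M)$, and the ``In particular'' assertion comes along for free. For finite-dimensionality I would invoke the structure of the BGG category: $M = \omega(\uM) \in \cO^\frp$ has finite length in $\cO$, and, by Proposition \ref{prop_O} i), each simple subquotient lies in $\cO^\frp$ and is thus of the form $L(\lambda_i)$ with $\lambda_i$ dominant integral with respect to $\frl_P$; for such $\lambda_i$ one has $H^0(\fru_P, L(\lambda_i)) = L_{\frl_P}(\lambda_i)$, the finite-dimensional simple $\frl_P$-module of highest weight $\lambda_i$ (cf.\ \cite[\S 9]{H1}). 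Summing over composition factors gives $\dim_K H^0(\fru_P, M) < \infty$. The main obstacle is this last finite-dimensionality input; everything else is a formal consequence of Proposition \ref{non-trivial_intersection} together with the commutation of $\fru_P$-fixed vectors with the action of $U(\frl_P)$.
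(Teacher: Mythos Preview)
Your proof is correct and follows essentially the same approach as the paper: apply Proposition \ref{non-trivial_intersection} to the closed $U(\frl_P)$-invariant subspace $H^0(\fru_P,M_r)$, identify its intersection with $M$ as $H^0(\fru_P,M)$, and use finite-dimensionality of the latter to dispense with the closure. The paper is terser---it does not spell out the $U(\frl_P)$-invariance check and leaves the finite-dimensionality of $H^0(\fru_P,M)$ as an asserted fact---so your write-up is in fact more complete; your justification of finite-dimensionality via composition factors and the identification $H^0(\fru_P,L(\lambda_i)) = L_{\frl_P}(\lambda_i)$ is a standard and correct way to fill that gap.
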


\begin{proof}
We clearly have $H^0(\fru_P,M_r) \cap M = H^0(\fru_P,M).$ As $H^0(\fru_P,M_r)$ is closed
in $M_r$ by the continuity of the action of $\frg$  and as $H^0(\fru_P,M)$ is finite-dimensional (!!!!)
and therefore complete the statement follows by Proposition 
\ref{non-trivial_intersection}.
\end{proof}

\begin{lemma}\label{Lemma_identity}
Let $M$ be an object of $\cO^\frq_\alg$ where $P\sub Q$  and let $V$ be a smooth admissible $L_Q$-representation.
Then there is an identification
$$H^0(\fru_P, \cF_Q^G(M,V)') = H^0(\fru_P, \cF_Q^G(M)') \hat{\otimes}_K V'$$ 
of Fr\'echet spaces with $P$-action (Here the action of $P$ on $V'$ is given by the composite $P\hookrightarrow Q \twoheadrightarrow L_Q.$) . 
\end{lemma}

\begin{proof}
The proof is the same as in \cite{OSch} by  replacing $\fru_B$ by $\fru_P$.
\end{proof}

For $M\in \cO^\frq_\alg$, let $W\sub M$ be a finite-dimensional algebraic $Q$-subrepresentation such that the
map (a morphism in $\cO^\frq_\alg$)
$$M(W):=U(\frg) \otimes_{U(\frq)} W \to M$$ is surjective. If $M$ is simple so that we may assume that $W$ comes via inflation from
an irreducible $L_Q$-representation then  
$H^0(\fru_Q,M)=W$. We set in this situation $W_M:=H^0(\fru_Q,M).$

\vspace{0.5cm}

Now we are able to state one of the building blocs of this paper which is an analogue of a statement by Hecht and Taylor  dealing with
representations of real Lie groups and Harish-Chandra modules \cite{HT1,HT2}. Its proof is already contained
in \cite{OSch, Br}, so that this result is not really new. Nevertheless, for later use we are going to give a proof of it.

\begin{thm}\label{thm_H0}
    Let $M$ be a simple object of $\cO^\frq_\alg$ with $Q$ maximal for $M$.  
    Let $V$ be a smooth admissible $L_Q$-representation. 
  Then for $P\sub Q$ there are $P$-equivariant topological isomorphisms
  
  \begin{equation*}
    H^0(U_P, \cF_Q^G(M,V)')=H^0(\fru_P,W_M) \otimes_K J_{U_P \cap L_Q}(V)',
  \end{equation*}
  and
    \begin{equation*}
      \overline{H}_0(U_P,\cF_Q^G(M,V))=H_0(\fru_P,W_M') \otimes_K J_{U_P \cap L_Q}(V),
  \end{equation*}
  where $J_{U_P \cap L_Q}$ is the usual Jacquet functor for the unipotent subgroup
  $U_P \cap L_Q \sub L_Q$.
\end{thm}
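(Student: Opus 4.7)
The plan is to compute the first formula and deduce the second by taking strong duals using Lemma \ref{duality}. The strategy, following the treatment of \cite{OSch, Br} for $\cO^Q_\alg$, is to compute the $\fru_P$-invariants first (a purely Lie-algebraic statement), then cut down to the $U_P$-invariants, which is where the smooth Jacquet functor enters.

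For the algebraic computation, use the presentation $\cF^G_Q(\uM)' = D(G_0) \otimes_{D(\frg, Q_0)} \uM$ and its realization as a projective limit of the Banach completions $\uM_r$ from Proposition \ref{non-trivial_intersection}. Apply Corollary \ref{ncohomology} to obtain $H^0(\fru_P, \cF^G_Q(\uM)') = H^0(\fru_P, M)$. Since $\uM$ is simple and equimaximal with $Q$ maximal for $\uM$, classical BGG theory in $\cO^\frq$ gives $H^0(\fru_Q, M) = W$; the PBW decomposition $\fru_P = \fru_Q \oplus (\fru_P \cap \frl_Q)$ together with $\fru_Q \cdot W = 0$ then collapses this to $H^0(\fru_P, M) = H^0(\fru_P \cap \frl_Q, W) = H^0(\fru_P, W)$.

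Tensoring with $V$ via Lemma \ref{Lemma_identity} gives
$$H^0(\fru_P, \cF^G_Q(\uM, V)') \;=\; H^0(\fru_P, W) \,\hat\otimes_K V'.$$
Now trace the $U_P$-action: the first factor is finite-dimensional, is $U_P$-stable (as $U_P$ normalizes $\fru_P$), and is killed by $\fru_P$, so the connected unipotent group $U_P$ acts trivially on it. On $V'$, $U_Q$ acts trivially (as $V$ is a smooth $L_Q$-representation) while $U_P \cap L_Q$ acts continuously; taking $U_P$-invariants of the tensor therefore reduces to $H^0(\fru_P, W) \otimes_K (V')^{U_P \cap L_Q}$, and $(V')^{U_P \cap L_Q} = J_{U_P \cap L_Q}(V)'$ by the standard duality between smooth admissible representations and their strong duals. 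This yields the first formula, and the second follows from Lemma \ref{duality}, the dualities being well-behaved because one tensor factor is finite-dimensional.

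\emph{Main obstacle.} The principal difficulty is extending the algebraic computation $H^0(\fru_P, \cF^G_Q(\uM)') = H^0(\fru_P, M)$ from the integral setting $\cO^Q_\alg$ of \cite{OSch, Br} to $\cO^Q_d$. The necessary structural results, Proposition \ref{non-trivial_intersection} and Corollary \ref{ncohomology}, have been formulated in Section 3 directly at the level of $\cO^P$, so the main task is to verify that the completion arguments of \cite{OSch, Br} carry over unchanged under this enlargement. A secondary subtlety is that Lemma \ref{Lemma_identity} gives the identification only as $\fru_P$-modules (equivalently, as Fr\'echet spaces), so one must explicitly check that the $U_P$-action inherited from $\cF^G_Q(\uM, V)'$ decomposes diagonally as claimed; this is where the specific features of $W$ (finite-dimensional, $\fru_Q$-annihilated, locally analytic $Q$-representation) enter decisively.
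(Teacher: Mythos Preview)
There is a genuine gap in the Lie-algebraic step. Corollary \ref{ncohomology} does \emph{not} say that $H^0(\fru_P,\cF^G_Q(\uM)')=H^0(\fru_P,M)$; it only says $H^0(\fru_P,M_r)=H^0(\fru_P,M)$, where $M_r=D_r(\frg,P)M$ is the closure of $M$ inside a single Banach piece. The object $\cF^G_Q(\uM)'=D(G_0)\otimes_{D(\frg,Q_0)}\uM$ is much larger: via the Iwahori--Bruhat decomposition $G_0=\coprod_{w\in W^I}\cI wQ_0$ it breaks up into summands $\cM^w$, and each $\cM^w_r$ further decomposes as $\bigoplus_u \delta_u\otimes M^w_r$ over finitely many $u\in U^-_{Q,0}$. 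Corollary \ref{ncohomology} governs only the single piece $w=1$, $u=1$. The substantive content of the theorem---and the place where the hypotheses ``simple'' and ``equimaximal'' with $Q$ maximal are actually used---is the cell-by-cell argument showing that every other piece contributes zero to $H^0(\fru_P,-)$. Concretely, for $w\neq 1$ one produces a nonzero element of $\fru_Q^-\cap\mathrm{Ad}(w^{-1})\fru_P$ and invokes the injectivity of the $\fru_Q^-$-action on $M$ and on $M_r^w$ (this is \cite[Cor.~5.5]{OS3} and \cite[Thm.~4.7, Step~1]{OS3}, and it fails without equimaximality); for $w=1$ and $u\neq 1$ one passes through the formal completion $\hat M=\prod_\mu M_\mu$, extends the $\fru_Q^-$-action to a $U_Q^-$-action, and argues with the highest weight vector as in \cite{Br} and \cite[Thm.~4.7, Step~2]{OS3}. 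Your plan never invokes simplicity or equimaximality in the algebraic computation, which is a clear signal that the core of the argument is missing.

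Your ``Main obstacle'' paragraph describes what remains as a routine check that completion arguments carry over, but this is precisely backwards: the Bruhat analysis \emph{is} the proof, and Corollary \ref{ncohomology} is only the last line identifying the surviving summand. Once that analysis is carried out (and the answer is indeed $H^0(\fru_P,M)=H^0(\fru_P,W)$), your treatment of the smooth factor via Lemma \ref{Lemma_identity} and the passage from $\fru_P$-invariants to $U_P$-invariants is correct and matches the paper.
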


\begin{proof}
 By the duality treated  in Lemma \ref{duality} it suffices to check the first identity.
 Here we assume first that $V={\bf 1}$ is the trivial representation and that $P=Q$.
 Write $M=L(\lambda)$ for some algebraic character $\lambda$
 of $T.$
 
  We follow the proof of  \cite[Thm. 3.5]{OSch}. Let $\cI  \sub G$ be the standard Iwahori subgroup. 
 For $w\in W$, let $M^w=M$ be the $D(\frg, \cI \cap wP_0w^{-1})$-module with the twisted action given by conjugation with $w$.
  Let $I\sub \Delta$ be a subset with $P=P_I.$   
  The Bruhat decomposition $G_0=\coprod_{w \in W^I} \cI w P_0$ induces a
  decomposition 
  \begin{eqnarray*}
    D(G_0) \otimes_{D(\frg,P_0)} M  & \simeq &  \bigoplus_{w \in W^I}
     D(\cI) \otimes_{D(\frg, \cI \cap wP_0w^{-1})}  M^w \\
     & \simeq &  \bigoplus_{w \in W^I}  D(w^{-1}\cI w) \otimes_{D(\frg, w^{-1}\cI w \cap P_0)} M .
      \end{eqnarray*}
         For each $w \in W^I$, we have
  $$H^0(\fru_P, D( \cI ) \otimes_{D(\frg, I \cap wP_0w^{-1})} M^w)
    \simeq H^0(\mathrm{Ad}(w^{-1}) (\fru_P), D(w^{-1}\cI w) \otimes_{D(\frg,
      w^{-1}Iw \cap P_0)} \otimes M ).$$
  We can write each summand in the shape 
  $$\cM^w:=D(w^{-1}\cI w) \otimes_{D(\frg, w^{-1}\cI w \cap P_0)} M =
  \varprojlim_r \cM_r^w$$    where $\cM_r^w=D_r(w^{-1}\cI w) \otimes_{D(\frg, w^{-1}\cI w \cap P_0)} M$. If we denote
  by $M^w_r$ the topological closure of $M$ in $\cM^w_r$, we get by \cite[1.4.2]{K1} finitely many elements  $u \in U^-_{P_0}$ such that
    \begin{equation*} \cM_r^w \simeq \bigoplus_u \delta_u \otimes M_r^w
  \end{equation*}
  and the action of $\frx \in \mathrm{Ad}(w^{-1}) (\fru_P)$ is given by
  \begin{equation*}
    \frx \cdot \sum \delta_u \otimes m_u=\sum \delta_u \otimes
    \mathrm{Ad}(u^{-1}(\frx))m_u.
  \end{equation*}
      
In \cite[Thm 3.5]{OSch} it is explained that for $w\neq 1$, there is a non-trivial element $\frx \in \fru_P^- \cap
  \mathrm{Ad}(w^{-1})(\fru_P)$.  Since $P$ is maximal for $M$ and $M$ is simple we deduce by  \cite[Corollary 8.6]{OS3}, that elements
  of $\fru_P^-$ act injectively on $M$, and as explained in Step 1 of \cite[Theorem 5.7]{OS3} they act 
  injectively on
  $M_r^w$, as well. We conclude that $H^0(\mathrm{Ad}(u^{-1})(\mathrm{Ad}(w^{-1})(\fru_P)), M_r^w)=0$ for $w\neq 1$ 
  since $\mathrm{Ad}(u^{-1})(\frx) \in
  \fru_P^-$. So  $H^0(\mathrm{Ad}(u^{-1})(\mathrm{Ad}(w^{-1})(\fru_P)), \cM_r^w)=0$. Hence by passing to the limit we
  get $H^0(\mathrm{Ad}(w^{-1})(\fru_P) , \cM^w)=0$ for $w\neq 1$. 

  \vskip8pt
  Now consider the case $w=1$. Again we may write $D(\cI)_r=\bigoplus
  \delta_u D(\frg,P_0)_r$ for a finite number of $u \in U_{P,0}^-$, so that $D(\cI)_r
  \otimes_{D(\frg,P_0)_r} M^1_r=\bigoplus_u \delta_u \otimes M^1_r$. We shall show
  that if $u \notin U_{P,0}^- \cap D(\frg,P_0)_r$, then
  $H^0(\mathrm{Ad}(u^{-1}) \fru_P, M^1_r)=0$. Here we will use Step
  $2$ in the proof of \cite[Theorem 4.7]{OS3} where we have used that $P$ is maximal for $M.$
  Let $\hat{M}$ be the
  formal completion of $M$, i.e. $\hat{M}=\prod_{\mu} M_{\mu}$ which is a
  $\frg$-module. The action of $\fru_P^-$ can be
  extended to an action of $U_P^-$ as explained in loc.cit. If $\frx \in \frg$ and $u \in U_P^-$,
  the action of $\mathrm{ad}(u) \frx$ on $M_r$ is the restriction of the composite
  $u \circ \frx \circ u^{-1}$ on $\hat{M}$. 
  Let $\hat{M}$ be the
  formal completion of $M$, i.e. $\hat{M}=\prod_{\mu} M_{\mu}$ which is a
  $\frg$-module. The action of $\fru_P^-$ can be
  extended to an action of $U_P^-$ as explained in loc.cit. If $\frx \in \frg$ and $u \in U_P^-$,
  the action of $\mathrm{ad}(u) \frx$ on $M_r$ is the restriction of the composite
  $u \circ \frx \circ u^{-1}$ on $\hat{M}$. As a consequence, we get 
  \begin{eqnarray*}
   H^0(\mathrm{ad}(u^{-1})\fru_P, M^1_r) & = & M^1_r \cap u^{-1} \cdot H^0(\fru_P, \hat{M})\\
   & = & M^1_r \cap u^{-1}W_M
  \end{eqnarray*}
 since $H^0(\fru_P, \hat{M})=H^0(\fru_P, M)=W_M$ (Here and in the sequel we copy the argumentation of Breuil \cite{Br}). 
 Let $v^+$ be a highest weight vector of $M$.  If the term  $H^0(\mathrm{ad}(u^{-1}) \fru_P, M^1_r) \neq (0)$ does not 
 vanish, then we have consequently $u^{-1}W_M \cap M^1_r\neq (0)$. 
 we deduce that $u^{-1}W_M \sub M^1_r$ since $W_M$ is irreducible.
 In particular $u^{-1}v^+ \in M_r$.
  By the proof of \cite[Theorem 4.7]{OS3}, this
  gives a contradiction if $u \notin U_P^- \cap D_r(\frg, P_0)$. 
  Hence by passing to the limit and using  Corollary \ref{ncohomology} we obtain finally an isomorphism of Fr\'echet spaces
  \begin{equation*}H^0(\fru_P, D( \cI ) \otimes_{D(\frg,P_0)} M )
    \simeq H^0(\fru_P, M)=W_M.
\end{equation*}

  \vskip8pt
 Next we consider the general situation where also a smooth representation is involved and where $P\sub Q.$ 
 By Lemma \ref{Lemma_identity} and what we proved above we get 
    \begin{equation*}
    H^0(\fru_P, \cF_Q^G(M,V)')=H^0(\fru_P,H^0(\fru_Q, \cF_Q^G(M,V)')=H^0(\fru_P,W_M) \otimes_K V',
  \end{equation*}
 
 Since $H^0(U_P, \cF_Q^G(M,V)')$ is a subspace of 
 $H^0(\fru_P, \cF_Q^G(M,V)')$ the latter one is stable by the 
  action of $U_P.$   Thus we deduce by Lemma \ref{Lemma_identity} that
  \begin{eqnarray*}
   H^0(U_P, \cF_Q^G(M,V)') & = &  H^0(U_P,H^0(\fru_P, \cF_Q^G(M,V)')) \\
	               & = &  H^0(U_P,H^0(\fru_P, W_M) \otimes_K V') \\
			 & = & H^0(\fru_P, W_M) \otimes_K J_{U_P\cap L_Q}(V)'.
\end{eqnarray*}
The last identity follows from the fact that the action of $U_P$ on $W_M$
is induced by the one of $\fru_P.$ 
 \end{proof}

\subsection{An analogue of the Casselman-Jacquet functor}

The next result generalizes Theorem \ref{thm_H0} by considering non-necessarily  simple modules $M$.

\begin{prop}\label{thm_H0_general}
  Let $M\in \cO^\frp_\alg$ and let $V$ be a smooth admissible  $L_P$-representation. Then there are canonical (topological)
  identities
$$H^0(\fru_P,\cF^G_P(M,V)')=\bigoplus_{W\sub H^0(\fru_P,M)} W \otimes S_W(V)'$$ 
  and 
  $$\overline{H}_0(\fru_P,\cF^G_P(M,V))=\bigoplus_{W\sub H^0(\fru_P,M)} W' \otimes S_W(V)$$ 
  of $P$-representations where 
  $S_W(V)$ is a  quotient of $i^{P_W}_P(V)_{|P}$ for some standard parabolic subgroup $P_W\supset P$ with 
  $V \sub (S_W(V))_{|P}$. 
(Here the sum is over all simple $L_P$-subrepresentations $W$ of $H^0(\fru_P,M)'$ with multiplicities.)
  \end{prop}

\begin{proof}
By Lemma \ref{duality} it is enough to prove the result for the $\fru_P$-invariants of 
the (topological) dual $\cF^G_P(M,V)'.$

\smallskip
For simple objects $M$ we use Theorem  \ref{thm_H0}.
If here the considered parabolic subgroups $P$ and $Q$  are identical then the claim is trivial since
$$H^0(\fru_P, \cF_P^G(M,V)') =  W_M \otimes_K V'.$$
Otherwise, we get
$$H^0(\fru_P, \cF_P^G(M,V)') =  H^0(\fru_P, W_M) \otimes_K i^Q_P(V)'$$
by the $PQ$-formula.
Then we apply \cite[II, Prop. 2.11]{Ja}. The latter reference says that for an 
algebraic  simple $G$-module $M$ the fix space $M^{U_P}$ is a simple $L_P$-module, as well.
Hence the module $L_P$-module $H^0(\fru_P,W_M)=W_M^{U_P}$ is simple and contributes to the index family of the direct sum.

In general we fix a Jordan-H\''older-series of $M$ and apply induction to the number of irreducible subquotients of $M$. More precisely, we
consider an exact sequence 
$$0 \to M_1 \to M \stackrel{p}{\to} M_2 \to 0$$
in our category $\cO^\frp_\alg$ where we suppose that
$M_2$ is simple. Hence we get by applying our bi-functor $\cF^G_P$ composed with taking the dual an exact sequence
$$0 \to \cF^G_P(M_1,V)' \to \cF^G_P(M,V)' \to \cF^G_P(M_2,V)' \to 0.$$ Next we take $\fru_P$-invariants which gives together with the 
induction hypothesis and the start of induction a (left) exact sequence
$$0 \to \bigoplus_{W\sub H^0(\fru_P,M_1)} W \otimes S_W(V)' \to H^0(\fru_P,\cF^G_P(M,V)') \stackrel{g}{\to} W_2 \otimes i^Q_P(V)'$$
 where  $Q$ is maximal for $M_2$ and $W_2=H^0(\fru_P,M_2).$
Next we consider the natural map $$\bar{p}=H^0(\fru_P,p):H^0(\fru_P,M) \to H^0(\fru_P,M_2)$$ and distinguish the following two cases:

\smallskip
\noindent $1^{\rm st}$ case: $\bar{p}=0.$  In this case we claim that the map $g$ also vanishes. Indeed, for seeing this we have to reenter the proof
of Theorem \ref{thm_H0}. Here\footnote{which holds for arbitrary modules $M$.} we saw the identities
$$ H^0(\fru_P,\cF^G_P(M)')= \bigoplus_{w \in W^I}H^0(\mathrm{Ad}(w^{-1}) (\fru_P), D(w^{-1}\cI w) \otimes_{U(\frg,
      w^{-1}Iw \cap P_0)} \otimes M ) $$
and
$$H^0(\mathrm{Ad}(w^{-1}) \fru_P, D(w^{-1}\cI w) \otimes_{U(\frg,w^{-1}Iw \cap P_0)} \otimes 
M )=  \varprojlim_r \sum_u \delta_u \otimes H^0(\mathrm{Ad}((wu)^{-1}) (\fru_p), M)$$
where $u$ varies in $U^-_{P,0}$ depending on $r$.
The map    $H^0(\fru_P,\cF^G_P(M)') \to H^0(\fru_P,\cF^G_P(M_2)')$ is clearly compatible (graded) with respect to the operations
$\bigoplus_{w\in W^I}$, $\sum_r$ and $\varprojlim_r.$

 %Let now $Q\supset P$ be maximal parabolic subgroup for $M_2.$ Then $H^0(\fru_Q,M_2)$ is a finite-dimensional simple 
 %$L_Q$-module. 
 %It follows 
 %that the map $H^0(\fru_Q,M) \to H^0(\fru_Q,M_2)$ also vanishes since otherwise we would have a splitting 
 %$H^0(\fru_Q,M)=H^0(\fru_Q,M_1) \bigoplus
 %H^0(\fru_Q,M_2)$ as $L_Q$-modules giving rise by taking $l_Q \cap \fru_P$-invariants  to the surjectivity of $p$, hence a contradiction. 
 
 \smallskip
 If $w \not\in Q$ or $u\not\in Q$, then $H^0(\mathrm{Ad}((wu)^{-1})(\fru_P), D(w^{-1}\cI w) \otimes_{U(\frg,
      w^{-1}Iw \cap P_0)} \otimes M_2)=0$ by the proof of Theorem \ref{H0_alg}. In particular, if $P=Q$ we are done.
      
      \smallskip
    If $w \in Q$ and $u\in Q$, then
     $\fru_Q = \mathrm{Ad}((wu)^{-1})(\fru_Q) \subset  \mathrm{Ad}((wu)^{-1}) (\fru_P).$ 
     By assumption the contribution $\delta_u \otimes H^0(\mathrm{Ad}((wu)^{-1}) (\fru_p), M)$ vanishes under $g$ for $w=1,u=1$.
        Suppose on the other hand that for $w\neq 1, u\neq 1,$ there is a non-trivial vector $v\in H^0(\mathrm{Ad}((wu)^{-1}) (\fru_P),M), v\neq 0.$  As $\mathrm{Ad}((wu)^{-1}) (\fru_P)\cap \fru^-_p\neq 0$ there is an element $z\in \fru_P^-$ which  kills $v.$ In particular, $z$ acts locally finitely on
     $v.$ 
     %We may suppose that $\alpha$ is a simple root. Let $P\subsetneq R\subset %G$ be the standard parabolic subgroup generated by $P$ 
     %and the root subspace corresponding to $\alpha$. 
     The subspace $N\subset M$ on which $z$ acts locally finitely
     is a $U(\frg)$-submodule, cf. \cite[Lemma 8.2]{OS2} and contains $v$. Thus  $v\in H^0(\mathrm{Ad}((wu)^{-1}) \fru_P,N) \subset
     H^0(\mathrm{Ad}((wu)^{-1}) \fru_P,M).$ 
     Further $N \subsetneq M$ is a proper submodule. Indeed, suppose that $N=M.$ We may assume that $P$ is maximal for $M$. Thus there must be a simple subquotient of $M$ for which $P$ is maximal, cf. \cite[9.3 Prop.]{H1} and on which $z$ acts locally finitely, as well.  But this is not possible by \cite[Cor. 8.7]{OS2}. 
     
     \smallskip
     If $N$ is simple then there are the following 2 cases:
     
     \smallskip
     $\alpha)$ Let $N\subset M_1$. Then clearly $g(v)=0$.
     
     \smallskip
     $\beta)$ Let $N\nsubseteq M_1$. Then $N$ is mapped isomorphically onto $M_2$ giving rise to a splitting of 
     the surjection $M \to M_2.$ A contradiction to the vanishing of the map 
     $\bar{p}.$
     
     \smallskip
     In general we argue on induction on the length on $M$ to see that $g(v)=0$ (Indeed we apply the induction hypothesis to $N$).   
     It follows      that the map $H^0(\mathrm{Ad}((wu)^{-1})(\fru_P),M) \to H^0(\mathrm{Ad}((wu)^{-1})(\fru_P),M_2)$ vanishes, as well.
 Thus $g=0.$   
 
    \medskip      
\noindent $2^{\rm nd}$ case: $\bar{p}\neq 0$. In this case $\bar{p}$ is automatically surjective. Hence we see that there is an exact
sequence 
$$0 \to \bigoplus_{W\sub H^0(\fru_P,M_1)} W \otimes S_W(V)' \to H^0(\fru_P,\cF^G_P(M,V)') \stackrel{g}{\to} W_2 \otimes S_{W_2}(V)' \to 0$$
for some quotient  $S_{W_2}(V)$ of $i^Q_P(V)$. Since $H^0(\fru_P,M)$ is obviously always contained in 
$H^0(\fru_P,\cF^G_P(M)')$  we see that $V \sub (S_{W_2}(V))_{|P}.$ 
On the other hand, it follows from the definition of the category $\cO^\frp$ that  
we have $H^0({\fru_P},M)=H^0({\fru_P},M_1) \bigoplus H^0({\fru_P},M_2)$ as $L_P$-modules with $H^0({\fru_P},M_2)=W_2.$ Since the action of $U_P$ is trivial on these spaces, this identity holds even as $P$-representations. 
It follows that $H^0(\fru_P,\cF^G_P(M,V)') \subset 
H^0(\fru_P,D(G)\otimes_{D(\frg,P)} H^0(\fru_P,M) \hat{\otimes} V').$
(For simple objects this is trivial and in general use induction together with the argument above). Since
$D(G)\otimes_{D(\frg,P)} H^0(\fru_P,M) \hat{\otimes} V'=(D(G)\otimes_{D(\frg,P)} H^0(\fru_P,M_1) \hat{\otimes} V') \bigoplus (D(G)\otimes_{D(\frg,P)} H^0(\fru_P,M_2) \hat{\otimes} V')$
we see  that above sequence splits.

. 
%Indeed, 
%if we consider a $K$-linear splitting of $g$ which on the tensor factor &%%  $H^0(\fru_P,M_2)$ is the above one, then the image of
%$W_2 \otimes S_{W_2}(V)'$ is necessarily closed with respect to the action of $P.$
%If $W_2$ does not occur in $H^0(\fru_P,M_1)$ then this sequence is moreover split. {\bf Reference?}
%
%\smallskip  
%If $W_2$ occurs in $H^0(\fru_P,M_1)$, then we replace $M_2$ by the largest quotient of $M%$ (which is -
%not-neccessariy simple) such that $H^0(\fru_P,M_2)$ is isotypic of weight $W_2$ and %$H^0(\fru_P,M_1)$ does not contain a copy of
%$W_2.$ Note that $M_2$ splits as a direct sum of simple $U(\frg)$-modules since there are %non non-trivial self-extension of the 
%same simple $U(\frg)$-module, cf. {\bf direkte Referenz ???, folgt aus \cite[Prop. 6.11]%%{H1}}. For the rest  we then argue in the same way as before.
\end{proof}

\vskip8pt
We can generalize the previous result as follows. Fix an integer $k\geq 1.$
Let $\fru_P^k \sub U(\frg)$ be the subspace generated by all the products $x_1\cdots x_k$ with $x_i \in \fru_P.$
With the same proof one checks:

\vskip8pt
\begin{prop}\label{H0_uk}
  Let $M\in \cO^\frp_\alg$ and let $V$ be a smooth admissible $L_P$-representation. Then there are canonical (topological)
  identities
   $$H^0(\fru_P^k,\cF^G_P(M,V)')=\bigoplus_{W\sub H^0(\fru_P^k,M)} W \otimes S_W(V)'$$ 
   and
  $$\overline{H}_0(\fru_P^k,\cF^G_P(M,V))=\bigoplus_{W\sub H^0(\fru_P^k,M)} W' \otimes S_W(V)$$ 
  of $P$-representations where 
  $S_W(V)$ is a quotient of $i^{P_W}_P(V)_{|P}$ for some standard parabolic subgroup $P_W\supset P$
  with $V \sub (S_W)(V)_{|P}.$ 
(Here the sum is over all maximal indecomposable  $P$-subrepresentations  $W$ of the finite-dimensional $P$-representation 
$H^0(\fru_P^k,M)$ with multiplicities.)
\end{prop}

\begin{proof}
We start with the remark that Lemma \ref{duality} generalizes to these $\fru_P^k$-invariants so that  $\overline{H}_0(\fru_P^k,\cF^G_P(M,V))$ is the topological dual of $H^0(\fru_P^k,\cF^G_P(M,V)')$ as $P$-representation.
 As already mentioned the proof coincides with that of Proposition \ref{thm_H0_general}. We list here the corresponding modifications.
 
 For  the start of induction which is essentially Theorem \ref{thm_H0} one has to pay attention. Here we follow the proof
 of loc.cit. where $k=1.$ If $w\neq 1$, then some elements of $\fru_P^k$ act injectively on $M_r^w$, too. Hence all the contributions
 $H^0(\mathrm{Ad}(w^{-1}) (\fru_P), D(w^{-1}\cI w) \otimes_{U(\frg,
      w^{-1}Iw \cap P_0)} \otimes M)$
 vanish.
 As for $w=1$ we observe that $H^0(\mathrm{ad}(u^{-1})\fru_P^k, M^1_r) \neq 0$ implies that
  $H^0(\mathrm{ad}(u^{-1})\fru_P, M^1_r)\neq 0.$ Hence we obtain for a simple 
  object $M$ for which $P$ is maximal the identity
 $$H^0(\fru_P^k,\cF^G_P(M,V)')=H^0(\fru_P^k,M) \otimes V'.$$
 The object $H^0(\fru_P^k,M) $ is an indecomposable $P$-module which gives the claim in this case.  
  If $Q\supset P$ is maximal for $M$ we get
  $$H^0(\fru_P^k,\cF^G_P(M,V)')=H^0(\fru_P^k,H^0(\fru_Q^k,M)) \otimes i^Q_P(V)'.$$ But the first factor is again indecomposable since even
  $H^0(\fru_B^k,M)$ is indecomposable as it coincides with the sum 
  $\sum_{i_1+ \cdots + i_d < k} K\cdot y_{\alpha_1}^{i_1}\cdots y_{\alpha_d}^{i_d} \cdot v^+ $,  where  $v^+$ is a highest weight vector generating $H^0(\fru_P,M)$,  $\{\alpha_1,\ldots,\alpha_d\}$ is a root basis and  $y_{\alpha_1}, \ldots, y_{\alpha_d} \in \fru_P^-$ are the usual generators of the weight spaces.).

As for the induction step we note that $\bar{p}: H^0(\fru_p,M) \to H^0(\fru_p,M_2)$ is surjective iff $H^0(\fru_p^k,M) \to H^0(\fru_p^k,M_2)$ is
surjective for all $k$ (by considering the epimorphisms $U(\fru_p^-) \otimes H^0(\fru_p,M) \to M$ and $U(\fru_p^-) \otimes H^0(\fru_p,M_2) \to M_2$, respectively.).
\end{proof}

\begin{rmk}
 Consdering the proof of the above propositions we deduce the following fact for two 
 integers $l >k \geq 1.$ If $W_k \subset H^0(\fru_P^k,M)$ and 
 $W_l \subset H^0(\fru_P^l,M)$ are two maximal indecomposable  $P$-subrepresentations  such that $W_k \subset W_l$ then $S_{W_k}(V)=S_{W_l}(V).$
\end{rmk}

\begin{comment}

\begin{rmk}\label{conjecture_2} 
 It is possible to make a more precise statement concerning the representations $S_W$ by reentering the proof
 of Theorem \ref{thm_H0} with non-simple objects $M$. Indeed, if a contribution $H^0(w^{-1}\fru_Pw,M)$ does not vanish, then one checks easily
 that the same is true for the whole ``Bruhat cell'' $U^-wP$, i.e.  $H^0(\Ad(u^{-1}(w^{-1}\fru_Pw)),M)\neq 0$
 for $u \in U^-_P$. Since the action of $\frg$ on $M$ is continuous, we see that the 
 non-vanishing is also true for elements in the Zariski-closure $\overline{U^-wP}.$ 
Hence as a $P$-representation we can write
$S_W=C^\infty(Y,K$) where $Y$ is a union of ``Schubert varieties'' $\overline{PwP}.$
One might conjecture that these smooth representations $S_W$ are induced representations, 
i.e., $S_W=i^{P_W}_P(V)$.
\end{rmk}

\end{comment}

\vspace{0.5cm}

For a locally analytic $T$-representation $V$ and a locally analytic character $\lambda:T \to K^\ast$
we denote by
$$V_\lambda:=\{v\in V \mid tv = \lambda(t)v \, \forall t\in T\}$$
the $\lambda$-eigenspace of $V$. We set
$$V_\alg:=\bigoplus_{\lambda \in X^\ast(T)} V_\lambda.$$

\begin{cor}\label{H0_alg}
 Let $M\in \cO^\frp_\alg$ and $k \geq 1.$ Then $\overline{H}_0(U_P,\cF^G_P(M))_\alg=H^0(\fru_P,M)'$ and 
 $\overline{H}_0(\fru_P^k,\cF^G_P(M))_\alg=H^0(\fru_P^k,M)'$.
\end{cor}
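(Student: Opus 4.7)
The plan is to apply Proposition \ref{H0_uk} (with its $k=1$ specialization Proposition \ref{thm_H0_general}) to $V = \mathbf{1}$, which yields the $L_P$-equivariant decomposition
\[
\overline{H}_0(\fru_P^k, \cF^G_P(M)) = \bigoplus_W W \otimes S_W,
\]
where $W$ ranges over the indecomposable $P$-subrepresentations of $H^0(\fru_P^k, M)'$ (whose direct sum exhausts $H^0(\fru_P^k, M)'$) and each $S_W \subset i^{P_W}_P(\mathbf{1})$ is a smooth $L_P$-representation containing the trivial line. Since $M \in \cO^P_\alg$, the weights of $H^0(\fru_P^k, M)$ are integral, so each $W$ is a finite-dimensional algebraic $P$-representation with weight decomposition $W = \bigoplus_\mu W_\mu$ over $\mu \in X^*(T)$.

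The next step is to extract the algebraic part. For a character $\lambda \in X^*(T)$, the $\lambda$-eigenspace of $W \otimes S_W$ under $T$ is $\bigoplus_\mu W_\mu \otimes (S_W)_{\lambda-\mu}$. Since $S_W$ is smooth, every vector in $S_W$ is fixed by some open subgroup of $T$, so an algebraic character $\lambda - \mu \in X^*(T)$ occurs in $S_W$ only when $\lambda = \mu$. Hence $(W \otimes S_W)_\alg = W \otimes (S_W)^T$, and the problem reduces to showing $(S_W)^T = K \cdot \mathbf{1}_{S_W}$ for each $W$. For simple $M \in \cO^P_\alg$ this follows directly from Theorem \ref{thm_H0}: equimaximality is automatic because the canonical algebraic $P$-action on objects of $\cO^P_\alg$ excludes the smooth twist of Proposition \ref{lemma_not_equi}, so with $Q = P$ we obtain $\overline{H}_0(\fru_P^k, \cF^G_P(M)) = H^0(\fru_P^k, M)' \otimes J_\emptyset(\mathbf{1}) = H^0(\fru_P^k, M)'$, giving $S_W = \mathbf{1}$.

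The general case proceeds by induction on the Jordan--H\"older length in $\cO^P_\alg$: for an exact sequence $0 \to M_1 \to M \to M_2 \to 0$, the contravariant exactness of $\cF^G_P$, the right-exactness of $\overline{H}_0(\fru_P^k, \cdot)$, and the exactness of the functor $V \mapsto V_\alg$ (since $V_\alg$ is a direct summand by the $T$-weight decomposition) give a right-exact commutative diagram matching the right-exact dual sequence $H^0(\fru_P^k, M_2)' \to H^0(\fru_P^k, M)' \to H^0(\fru_P^k, M_1)' \to 0$, and combined with a dimension count using the finite-dimensional $H^0(\fru_P^k, M)'$ one concludes. The first statement with $U_P$ in place of $\fru_P$ follows from the second because on integral-$T$-weight vectors the action of $U_P$ is integrated from $\fru_P$, so $\overline{H}_0(U_P, V)_\alg = \overline{H}_0(\fru_P, V)_\alg$. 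The main obstacle I anticipate is the inductive step: while the simple case produces $S_W = \mathbf{1}$ cleanly, the $S_W$ appearing in Proposition \ref{H0_uk} for a non-simple $M$ could a priori be strictly larger smooth factors, and one has to trace the construction through the JH filtration to rule out extra $T$-fixed vectors, or equivalently establish the dimension bound $\dim \overline{H}_0(\fru_P^k, \cF^G_P(M))_\alg \leq \dim H^0(\fru_P^k, M)'$ using the natural surjection $\overline{H}_0(\fru_P^k, \cF^G_P(M)) \twoheadrightarrow H^0(\fru_P^k, M)'$ dual to the canonical inclusion $M \hookrightarrow \cF^G_P(M)'$.
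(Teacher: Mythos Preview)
Your overall strategy coincides with the paper's: invoke Proposition~\ref{H0_uk} to write $\overline{H}_0(\fru_P^k,\cF^G_P(M))=\bigoplus_W W\otimes S_W$ and then take the algebraic part. The paper's proof is one line --- since the weights of $W$ are algebraic and $S_W$ is smooth, $(W\otimes S_W)_\alg=W$ --- and your analysis correctly unfolds this into $(W\otimes S_W)_\alg=W\otimes (S_W)^T$ together with $(S_W)^T=K$.

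There is, however, a genuine gap in your base case. You assert that for simple $M\in\cO^P_\alg$ one may take $Q=P$ in Theorem~\ref{thm_H0} because ``equimaximality is automatic''. Equimaximality holds, but it does \emph{not} force the maximal parabolic $Q$ for $M$ to equal $P$; it only says that the maximal $\frq$ for $\omega(M)$ lifts to a $Q$-structure on $\uM$. Take for instance $M=L(0)\in\cO^P_\alg$: here $Q=G$. By the PQ-formula $\cF^G_P(M)=\cF^G_Q(M,\,i^Q_P(\mathbf 1))$, and Theorem~\ref{thm_H0} (via Corollary~\ref{H0_not_equi}) then gives
\[
\overline{H}_0\bigl(\fru_P^k,\cF^G_P(M)\bigr)\;=\;H^0(\fru_P^k,M)'\otimes i^Q_P(\mathbf 1),
\]
so $S_W=i^Q_P(\mathbf 1)\neq\mathbf 1$. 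Thus even for simple $M$ you are left with proving $(i^Q_P(\mathbf 1))^T=K$, and your induction never leaves the ground. The right-exactness/dimension-count sketch for the inductive step cannot rescue this either, since $H^0(\fru_P^k,-)$ is only left exact on the $\cO$-side and $(\,\cdot\,)_\alg$ is only a subfunctor (your ``direct summand'' claim fails because smooth $T$-representations are not semisimple).

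The missing ingredient --- which the paper uses implicitly --- is that $(S_W)^T\subset (i^{P_W}_P(\mathbf 1))^T=K$: a locally constant $T$-invariant function on the compact partial flag variety $P_W/P$ is constant (every $T$-orbit closure contains a $T$-fixed point, and the $T$-fixed points are connected by $T$-stable $\mathbb P^1$'s on each of which the $\GL_2$-computation forces constancy). Once this is supplied, your reduction $(W\otimes S_W)_\alg=W\otimes(S_W)^T=W$ is exactly the paper's argument, and the induction becomes superfluous.
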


\begin{proof}
Since the weight spaces of  $M$ are algebraic  we see that $(W \otimes S_W({\bf 1}))_\alg=W$ for all contributions $W$
in $H^0(\fru_P^k,M)$. Hence the claim follows.
\end{proof}

In the case of generalized Verma modules we can give a more precise statement.

\begin{prop}\label{h0_verma}
 Let $M=U(\frg)\otimes_{U(\frp)} W\in \cO^\frp_\alg$ be a generalized Verma module for some parabolic subgroup $P$
 and let $V$ be a smooth admissible $L_P$-representation. 
Then $\overline{H}_0(U_P,\cF^G_P(M,V))=H^0(\fru_P,M)'\otimes V$ and 
 $\overline{H}_0(\fru_P^k,\cF^G_P(M,V))=H^0(\fru_P^k,M)'\otimes V$ for all $k\geq 1.$
\end{prop}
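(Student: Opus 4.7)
The decisive observation is that for a generalized Verma module $\uM = U(\frg) \otimes_{U(\frp)} W$ the relation submodule $\frd$ in the canonical presentation
\[
0 \lra \frd \lra U(\frg) \otimes_{U(\frp)} W \lra \uM \lra 0
\]
is zero, so by formula~(\ref{defF}) one obtains the clean identification
\[
\cF^G_P(\uM, V) \;=\; \Ind^G_P(W' \otimes_K V).
\]
The problem is thus reduced to computing the naive topological Jacquet module of a plain locally analytic induction. I would concentrate on the more general $\fru_P^k$-statement; the $U_P$-version is then the case $k=1$, since on a locally analytic representation the $U_P$-coinvariants and the $\fru_P$-coinvariants agree (the $U_P$-orbit maps integrate from the infinitesimal $\fru_P$-action).

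By Lemma~\ref{duality}, whose proof extends routinely from $\fru_P$ to $\fru_P^k$, the claim is equivalent to the dual statement
\[
H^0(\fru_P^k, \cF^G_P(\uM, V)') \;=\; H^0(\fru_P^k, \omega(\uM)) \otimes_K V'
\]
of continuous $L_P$-representations. Using $\uM \cong D(\frg, P) \otimes_{D(P)} W$ together with the isomorphism $D(G) \otimes_{D(\frg, P)} \uM \cong \cF^G_P(\uM)'$ recalled in the introduction, one rewrites the left-hand side as $H^0(\fru_P^k, D(G) \otimes_{D(P)} (W \otimes_K V'))$. Since $\fru_P^k$ acts trivially on the smooth factor $V'$, the argument of Lemma~\ref{Lemma_identity} applies verbatim to factor it out, and one is reduced to the case $V = {\bf 1}$, namely to showing
\[
H^0(\fru_P^k, \cF^G_P(\uM)') \;=\; H^0(\fru_P^k, \omega(\uM)).
\]

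This is the desired extension of Corollary~\ref{ncohomology} from $\fru_P$ to $\fru_P^k$, and its proof via Proposition~\ref{non-trivial_intersection} follows the same template. The key points are: (i) $\fru_P^k \subset U(\frg)$ is $U(\frl_P)$-stable because $[\frl_P, \fru_P] \subset \fru_P$, so $H^0(\fru_P^k, M_r)$ is a closed $U(\frl_P)$-invariant subspace of $M_r$ whose intersection with $\omega(\uM)$ equals $H^0(\fru_P^k, \omega(\uM))$; and (ii) provided this intersection is finite-dimensional, it is closed in $M_r$ and the bijection of Proposition~\ref{non-trivial_intersection} forces equality of the two subspaces. The main obstacle I expect is the finite-dimensionality in (ii). For this I would exploit the PBW decomposition $\omega(\uM) \cong U(\fru_P^-) \otimes_K W$ of a generalized Verma module and argue by weight spaces with respect to $\frt \subset \frl_P$: since $\fru_P$ shifts weights by positive roots not lying in $\Phi_{L_P}$, any element annihilated by every length-$k$ product of $\fru_P$-elements must be supported in those (generalized) weight spaces whose weight is within bounded $\frt$-distance of the weights occurring in $W$. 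Only finitely many such weight spaces contribute, each of them finite-dimensional, which yields the required finiteness and completes the proof.
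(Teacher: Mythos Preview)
Your reduction to the case $V = {\bf 1}$ via Lemma~\ref{Lemma_identity} and the duality Lemma~\ref{duality} is fine, but the core step contains a genuine gap: you are conflating $\cF^G_P(\uM)'$ with the object $M_r$ appearing in Proposition~\ref{non-trivial_intersection} and Corollary~\ref{ncohomology}. Those two results concern $M_r = D_r(\frg,P)M$, i.e.\ the closure of $M$ inside a \emph{single} Banach completion of the ring $D(\frg,P)$. By contrast, the object you need to control is
\[
\cF^G_P(\uM)' \;\cong\; D(G)\otimes_{D(\frg,P)}\uM \;\cong\; D(G_0)\otimes_{U(\frg,P_0)}\uM,
\]
which is strictly larger. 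Proposition~\ref{non-trivial_intersection} therefore does \emph{not} give you $H^0(\fru_P^k,\cF^G_P(\uM)') = H^0(\fru_P^k,\omega(\uM))$; at best it handles one summand of a much larger decomposition.

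The paper's proof makes this explicit via the Bruhat decomposition
\[
D(G_0)\otimes_{U(\frg,P_0)}\uM \;\simeq\; \bigoplus_{w\in W^I} D(\cI)\otimes_{U(\frg,\cI\cap wP_0w^{-1})}\uM^w,
\]
and each $w$-summand further breaks into pieces $\delta_u\otimes M^w_r$ indexed by $u\in U^-_{P_0}$. The content of the proposition is precisely that \emph{all} the pieces with $w\neq 1$, and all pieces with $w=1$ but $u\neq 1$, have vanishing $\fru_P^k$-invariants. For the generalized Verma module this uses freeness over $U(\fru_P^-)$ (for $w\neq 1$) and a weight argument with the normalizer of $\fru_P$ (for $w=1$, $u\neq 1$). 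Only after these vanishing statements are established does Corollary~\ref{ncohomology} (or its $\fru_P^k$-variant) identify the surviving piece with $H^0(\fru_P^k,\omega(\uM))$. Your argument skips this entire analysis, so as written it does not prove the proposition.
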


\begin{proof}
We may suppose that $V$ is trivial.
 The start of the proof is the same as in Theorem \ref{thm_H0}. For $w\neq 1$ one checks that the contributions
 $H^0(\fru_P^k, D( \cI ) \otimes_{U(\frg, I \cap wP_0w^{-1})} M^w)$
 vanishes well  since a generalized Verma module is free over $U(\fru_P^-)$
 and consequently elements of $\fru_P^-$ act injectively on $M.$
  
    Now consider the case $w=1$. Here we shall show
  that if $u \in U_{P,0}^-\setminus \{1\}$, then we have
  $H^0(\mathrm{Ad}(u^{-1}) (\fru_P), M)=0$. 
  Indeed, let $u\neq 1$. Since the normalisator of $\fru_P$ under the adjoint action of $G$ is the
  parabolic subgroup $P$, there  is some $v \in \fru_P$ such that $uvu^{-1} \not\in \fru_P.$
	Write $uvu^{-1} =v_- +v_+$ where $v_- \in \fru_P^-$ and $v_ +\in \frp.$ Let $m \in M_\chi, m\neq 0$ for some weight $\chi.$
	As we have already used above the action of $\fru_P^-$ is injective on $M$. Hence $v_- \cdot m\neq 0.$ 
	But the elements $v_-, v_+$ shift the weights of $M$  in opposite directions. Any identity $(v_- + v_+)\cdot m=0$ would 
	imply $0\neq v_-m = -v_+m$ which yields thus for weight reasons a
	contradiction. 	In general we decompose any element $m \in M$ into its weight components. For simplicity
	let $m=m_1+m_2$ where $m_i \in M_{\chi_i}$ and $\chi_1\neq \chi_2$ are weights. Again we consider the sequence
	$0\neq v_-m = v_-m_1 + v_- m_2=-v_+m_1 - v_+m_2.$ Comparing weights and that the action of
	$\fru_P^-$ on $M$ is injective we see that this is not possible. Hence $H^0(\mathrm{Ad}(u^{-1})(\fru_P), M)=0$ since $uvu^{-1}$ acts injectively on $M.$ With the same proof as in Step 1 of \cite[Thm. 5.7]{OS3} one checks that $v_-$
	acts injectively on $M_r.$ As the weights for the action of $v_+$ on $M_r$ are different from those of $v_-$, we see that $uvu^{-1}$ acts injectively 
	on $M_r$ as well. Thus 
		$H^0(\mathrm{Ad}(u^{-1})(\fru_P), M_r)=0$.
		By repeating the arguments in Theorem \ref{thm_H0} we obtain an isomorphism of Fr\'echet spaces
  \begin{equation*}H^0(\fru_P, D( \cI ) \otimes_{U(\frg,P_0)} M )
    \simeq H^0(\fru_P, M).
  \end{equation*}
  
  The claim follows moreover for all $k\geq 1$ inductively since $\Ad(u^{-1})(\fru_P^k)=\Ad(u^{-1})(\fru_P)^k$. 
  %Indeed, set $x=\Ad(u^{-1})(\fru_P)$ and  suppose that there is an $m\in M$ with $x^km=0$ for all $x\in % %\Ad(u^{-1})(\fru_P).$ Then $x^{k-1}m=0$ by what we have shown above. By induction hypothesis it follows $m=0.$ 
 \end{proof}

 \begin{exam}
  Let $G=\GL_2$ and consider the exact sequence $0 \to M(s\cdot0) \to M(0) \to {\bf 1} \to 0.$ By applying 
  of $\cF^G_B$ and dualizing   we get an exact sequence
  $$0\to D(G) \otimes_{D(\frg,B)} M(s\cdot0) \to D(G) \otimes_{D(\frg,B)} M(0) \to (i^G_B({\bf 1}))' \to 0. $$
  Taking $H^0(\fru_B,-)$-invariants we get a left exact sequence
  $$0 \to K_{s\cdot 0} \to K_{s\cdot 0} \bigoplus K_{0} \to (i^G_B({\bf 1} ))'$$
  which is not exact.
 \end{exam}

% 
%\begin{rmk}
% The previous statement corroborates a conjecture of Kohlhaase made in %\cite[Remark 8.6]{K2}.
%\end{rmk}

\begin{rmk}\label{geht_auch}
 The same statement holds true (with the same proof) for objects $M\in \cO^\frp_\alg$ of the shape 
 $M=U(\frg) \otimes_{U(\frp) }W$ where
 $W$ is an arbitrary finite-dimensional algebraic $P$-representation. 
 In particular, it holds for objects $M$ which are  projective in the category $\cO_\alg^\frp$ since
 such an object it is free as a $U(\fru_P^-)$-module \cite{H1}.  
\end{rmk}

\vskip8pt

Next there is the following variant of the above proposition concerning the other parabolic  subgroups of type $P$ 
lying in the same apartment induced by $T$.
Let $P=P_I=L_PU_P$ and set for $w\in W^I$, $P^w=w^{-1}Pw ,L_P^w=w^{-1}L_Pw, U_P^w=w^{-1}U_Pw.$ Here for a $L_P^w$-module 
$V$, we let $V^{w}$ be the $L_P$-module twisted by $w,$ i.e., we consider the action
induced by composing the given action with the homomorphism $L_P \to w^{-1}L_Pw, g\mapsto w^{-1}gw$.

\begin{prop}\label{h0_verma_alle}
With the above notation, let $M\in \cO^{\frp^w}_\alg$ be a generalized Verma module  with respect to $P^w$ or a simple module 
such that $P^w$ is maximal for $M$. Let $V$ be a smooth admissible $L_P^w$-representation. 
Then $\overline{H}_0(U_P,\cF^G_{P^w}(M,V))=(H^0(\fru_P^w,M)')^{w}\otimes V^{w}$ and
$\overline{H}_0(\fru_P^k,\cF^G_{P^w}(M,V))=(H^0((\fru_P^w)^k,M)')^{w}\otimes V^{w}$ for all $k\geq 1.$
\end{prop}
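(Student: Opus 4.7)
The plan is to reduce the statement to Proposition \ref{h0_verma} via a right-translation-by-$w$ isomorphism. Fix a lift of $w$ to $N_G(T)$, again denoted $w$. For $f \in \Ind^G_{P^w}(W' \otimes V)$, set $\tilde f(g) := f(gw)$. A direct computation
$$\tilde f(gp) = f(gpw) = f(gw \cdot w^{-1}pw) = (w^{-1}pw)^{-1} \tilde f(g), \quad p \in P,$$
shows that $f \mapsto \tilde f$ is a continuous $G$-equivariant (for left translation) isomorphism $\Ind^G_{P^w}(W' \otimes V) \stackrel{\sim}{\to} \Ind^G_P((W' \otimes V)^{w})$. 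Under differentiation, the Lie algebra action is twisted by $\Ad(w^{-1})$: the $\frx$-action on $\tilde f$ corresponds to the $\Ad(w^{-1})\frx$-action on $f$. Consequently the $\frd$-condition defining $\cF^G_{P^w}(\uM, V)$ inside $\Ind^G_{P^w}(W' \otimes V)$ translates to the defining condition for $\cF^G_P(\uM^w, V^w)$, where $\uM^w \cong U(\frg) \otimes_{U(\frp)} W^w$ is a generalized Verma module with respect to $P$ (the $\Ad(w)$-identification $U(\frg) \otimes_{U(\frp)} W^w \cong U(\frg) \otimes_{U(\frp^w)} W$ makes this explicit).

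This yields an isomorphism $\cF^G_{P^w}(\uM, V) \cong \cF^G_P(\uM^w, V^w)$ of locally analytic $G$-representations. Applying Proposition \ref{h0_verma} to the right-hand side gives
$$\overline{H}_0(U_P, \cF^G_{P^w}(\uM, V)) = H^0(\fru_P, \omega(\uM^w))' \otimes V^w,$$
and since $\fru_P$-invariants on a $w$-twisted module compute $\Ad(w^{-1})\fru_P = \fru_P^w$-invariants on the original, twisted back by $w$, the right-hand side equals $(H^0(\fru_P^w, \omega(\uM))')^w \otimes V^w$, which is the claimed formula. The $\fru_P^k$ version follows identically from Proposition \ref{H0_uk}, using $\Ad(w^{-1})\fru_P^k = (\fru_P^w)^k$. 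For simple $\uM$ with $P^w$ maximal, the same reduction is combined with Theorem \ref{thm_H0} (resp.\ Corollary \ref{H0_not_equi} in the non-equimaximal case) in place of Proposition \ref{h0_verma}.

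The main technical obstacle lies in the first paragraph: verifying rigorously that right-translation by $w$ preserves the $\frd$-condition, i.e., that under the $\Ad(w)$-identification of Verma modules the defining ideal of $\uM$ inside $U(\frg) \otimes_{U(\frp^w)} W$ maps to the defining ideal of $\uM^w$ inside $U(\frg) \otimes_{U(\frp)} W^w$, and that the pairing $\langle \cdot , \cdot \rangle_{C^{an}(G,V)}$ transforms compatibly. Once this formal bookkeeping is done, the argument becomes a direct transcription of Proposition \ref{h0_verma}. As an alternative, one can avoid the translation and instead redo the Bruhat decomposition with respect to the non-standard parabolic $P^w$: the vanishing of the ``bad'' cells then goes through verbatim because $P$ is its own normalizer of $\fru_P$ in $G$, so the non-vanishing condition $\Ad((w'u)^{-1})\fru_P \subset \frp^w$ forces $w'uw^{-1} \in P$, which together with the Bruhat constraints singles out the cell corresponding to $w' = w$, $u = 1$.
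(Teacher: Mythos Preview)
Your primary approach---reducing to Proposition~\ref{h0_verma} via the right-translation isomorphism $f \mapsto \tilde f$, $\tilde f(g) = f(gw)$---is correct and is genuinely different from what the paper does. The paper simply reruns the Bruhat-cell computation of Theorem~\ref{thm_H0} and Proposition~\ref{h0_verma} with $P^w$ in place of $P$, observing that this time the unique surviving cell is the one indexed by $x=w$ rather than $x=1$; this is exactly your ``alternative'' paragraph. Your main route is more conceptual: once one checks (as you outline) that right translation by $w$ carries $\cF^G_{P^w}(\uM,V)$ isomorphically onto $\cF^G_P(\uM^w,V^w)$ with $\uM^w \cong U(\frg)\otimes_{U(\frp)}W^w$ a genuine object of $\cO^P$, the result is an immediate consequence of the already-proved standard case, with no need to reopen the cell-by-cell analysis. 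The paper's route avoids the bookkeeping of tracking the $\frd$-condition through the twist but pays for this by repeating the entire argument; your route front-loads that bookkeeping and then gets the answer for free.

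One small caution on the simple case: invoking Corollary~\ref{H0_not_equi} for a non-equimaximal simple $\uM$ would produce a formula involving $i^Q_P(\chi'\otimes V)$ rather than the clean tensor $(\,\cdot\,)\otimes V^w$ asserted in the proposition. The paper's own proof (``same as above'') tacitly relies on the equimaximal hypothesis present in Theorem~\ref{thm_H0}, so the statement should be read with that assumption in the simple case; your reduction then only needs Theorem~\ref{thm_H0}, not Corollary~\ref{H0_not_equi}.
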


\begin{proof}
 The proof is the same as above. The difference is that this time all contributions 
 
 $H^0(\mathrm{Ad}(x^{-1}) \fru_P^k, D(x^{-1}\cI x) \otimes_{U(\frg,
      x^{-1}\cI x \cap P_0^w)} \otimes M )$
 with $x\neq w$ vanish. 
\end{proof}

  Next we consider an analogue of the Casselman-Jacquet functor \cite{Ca2}, i.e., limits of the above  functors $H^0(\fru_P^k,-)$ (resp.  $\overline{H_0}(\fru_P^k,-)$ by duality) with 
  varying $k$.   For a locally analytic $G$-representation $U$, the expression $\varinjlim_k H^0(\fru_P^k,U')$
  is a $\frg \rtimes P$-module as the  same reasoning as in loc.cit. applies.
    We denote by $$\cG^G_P: \Rep_K(G)^{\rm loc.an.} \to {\rm Mod}_{\frg\rtimes P}$$
    the induced functor.
  As before let $M$ be an object of $\cO^\frp_\alg$ and let $V$ be a smooth admissible $L_P$-representation. Then the object 
  $\varinjlim_k H^0(\fru_P^k,\cF^G_P(M,V)')$ is even a $D(\frg, P)$-module 
  since $M$ is an inductive limit of finite-dimensional $P$-representations.
    In this way we get in some sense a right adjoint to the globalisation functor $\cF^G_P$. Moreover,
   it defines a "section" of it for some objects in $\cO^\frp_\alg \times \Rep^{\infty,a}_K(L_P)$ (i.e. $\cG^G_P(\cF^G_P(M,V))=M \otimes V')$ , cf. 
   Proposition \ref{h0_verma} and Theorem \ref{thm_H0}.
      In general we can deduce the following statement.
   
   \begin{prop}\label{G}
  Let $M\in \cO^\frp_\alg$ and let $V$ be a smooth admissible $L_P$-representation. Then there is a canonical (topological)
  identity
   $$\cG^G_P(\cF^G_P(M,V))=\bigoplus_{N \sub M} N \otimes S_N(V)'$$ 
    of $P$-representations where 
  $S_N(V)$ is a quotient of $i^{P_N}_P(V)_{|P}$ with $V \sub (S_N)(V)_{|P}$ for the uniquely standard parabolic subgroup $P_N\supset P$ which is maximal for $N.$ (Here the sum is over all simple constituents $N$ of $M$ with multiplicities.)
\end{prop}
   
   \begin{proof}
    This follows from Proposition \ref{H0_uk} by taking the inductive limit. Note that $\varinjlim_k H^0(\fru_p^k,M)=M.$
   \end{proof}

   \vskip8pt
   
     \begin{prop}\label{U_irr_subquotient}
      Let $U$ be some irreducible subquotient of some $\cF^G_P(M,V)$ with $M \in \cO^\frp_\alg.$  
      Then $\cG^G_P(U)$ is a simple $D(\frg,Q)$-module for some parabolic subgroup $Q\subset G$ with $P\subset Q$.
   \end{prop}
   
   \begin{proof}
   Since $U$ is simple it must coincide by the JH-theorem applied to $\cF^G_P(M,V)$
   with some object of the shape $\cF^G_Q(N,W)$ where $N$ is a simple subquotient of $M$, $Q$ is maximal for
   $N$ and $W$ is an irreducible subquotient of $i^Q_P(V).$
   But for these objects we deduce by Proposition \ref{G} that $\cG^G_Q(U)=N\otimes W'$.  On the other hand, we
   have $\cG^G_Q(U)=\cG^G_P(U)$ since any element of $N$ is killed by weight reasons by some $\fru_P^k$, $k \geq 1.$
  Hence we get the claim.
   \end{proof}

   As a by-product we get the following statement by applying the functor $\cG^G_P$ and
   Proposition \ref{H0_uk}. One part of it was already given by Breuil \cite[Cor. 2.5]{Br}.
   
   \begin{cor}\label{U_simple}
    Let $U$ be an irreducible subobject (quotient) of some $\cF^G_P(M,V)$ with $M \in \cO^\frp_\alg.$
    Then $U$ has the shape $\cF^G_Q(N,W)$ where $P \sub Q$ and $N$ is a simple quotient of $M$ (submodule) and 
    $W$ is an irreducible subrepresentation (quotient) of $i^Q_P(V)$. 
   \end{cor}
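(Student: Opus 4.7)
My plan combines the simplicity of $\cG^G_P(U)$ from the preceding proposition with the structural decomposition provided by Proposition \ref{H0_uk} in order to distinguish genuine subobjects (respectively quotients) from arbitrary subquotients in the Jordan--H\"older series of $\cF^G_P(M,V)$.

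The preceding proposition already yields an isomorphism $U \cong \cF^G_Q(N,W)$ in which $N$ is a simple subquotient of $M$ in $\cO^P_\alg$, the parabolic $Q \supset P$ is maximal for $N$, and $W$ is an irreducible subquotient of $i^Q_P(V)$; moreover $\cG^G_P(U) = \cG^G_Q(U) = N \otimes W'$ is simple as a $D(\frg,P)$-module. What remains is to promote ``subquotient of $M$'' to ``simple quotient of $M$'' and ``irreducible subquotient of $i^Q_P(V)$'' to ``subrepresentation of $i^Q_P(V)$'' when $U$ is a subobject; the statement for quotients is formally dual.

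For the subobject case, I would dualise the inclusion $U \hookrightarrow \cF^G_P(M,V)$ to obtain a $D(G)$-equivariant surjection $\cF^G_P(M,V)' \twoheadrightarrow U'$, apply $\varinjlim_k H^0(\fru_P^k,-)$, and produce a surjection
\begin{equation*}
\cG^G_P(\cF^G_P(M,V)) \twoheadrightarrow \cG^G_P(U) = N \otimes W'.
\end{equation*}
Using Lemma \ref{duality} and Proposition \ref{H0_uk}, the left-hand side is a direct sum whose Lie-algebra factors are controlled by the submodules $H^0(\fru_P^k,M) \subset M$ (which exhaust $M$ as a $U(\frg)$-module) and whose smooth factors embed into $i^{P_W}_P(V)$ as subrepresentations $S_W$. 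Since $N \otimes W'$ is simple, the surjection factors through a single summand, and reading off the two tensor factors realises $N$ as a simple $U(\frg)$-quotient of $M$ and $W$ as a smooth $L_Q$-subrepresentation of $i^Q_P(V)$. The quotient case is handled by reversing all arrows throughout, using the contravariance of $\cF^G_P$ in the first argument.

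The main technical obstacle I anticipate lies in justifying the surjection at the level of Casselman--Jacquet modules: left-exactness of $H^0(\fru_P^k,-)$ on its own does not guarantee right-exactness, so one must exploit the strong admissibility of $\cF^G_P(M,V)$ (at least when $V$ is of finite length) together with Corollary \ref{ncohomology} and Proposition \ref{non-trivial_intersection} to pass safely to the direct limit over $k$. A secondary technicality is the non-canonical nature of the smooth pieces $S_W$ flagged in Remark \ref{conjecture_2}; but since each $S_W$ is at least a subrepresentation of an induction $i^{P_W}_P(V)$, this is already enough to deduce the stated embedding of $W$ into $i^Q_P(V)$.
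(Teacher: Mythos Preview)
Your approach is essentially the same as the paper's: apply $\cG^G_P$, invoke the simplicity of $\cG^G_P(U)=N\otimes W'$ from the preceding proposition, and read off the structure via Proposition~\ref{H0_uk}. The paper also treats the two cases (subobject/quotient) separately and in the same way you do.

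The one point where you make life harder than necessary is the surjectivity in the subobject case. You try to establish the surjection $\cG^G_P(\cF^G_P(M,V))\twoheadrightarrow \cG^G_P(U)$ directly, and you correctly flag that left-exactness of $H^0(\fru_P^k,-)$ does not give this for free; you then propose bringing in strong admissibility, Corollary~\ref{ncohomology}, and Proposition~\ref{non-trivial_intersection}. The paper avoids this entirely: it only claims that the induced morphism $\cG^G_P(\cF^G_P(M,V))\to\cG^G_P(U)$ is \emph{nonzero}, and then uses that the target is simple as a $D(\frg,P)$-module (the content of the preceding proposition) to conclude surjectivity. So you already have in hand the key fact that dissolves your ``main technical obstacle''; you just use it one step too late (to factor through a summand) rather than one step earlier (to get the surjection). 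For the quotient case both you and the paper use left-exactness to obtain an honest injection $\cG^G_P(U)\hookrightarrow\cG^G_P(\cF^G_P(M,V))$, which needs no such trick.
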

   
   \begin{proof} In the proof of the foregoing proposition we saw that $U$ has the shape $\cF^G_Q(N,W)$ where $N$ is a simple subquotient of $M$, $Q$ is maximal for
   $N$ and $W$ is an irreducible subquotient of $i^Q_P(V).$
    If $U$ is a quotient then we get by the left exactness of the functor $\cG^G_P$ an injection
    $\cG^G_P(U) \hookrightarrow \cG^G_P(\cF^G_P(M,V))$ of $D(\frg,P)$-modules.
    In particular, we see that $N$ is a submodule of $M$
    since $\cG^G_P(U)=N\otimes W'$ . Further we see, e.g., by Proposition \ref{G} that  $W'\subset S_N(V)=i^Q_P(V)$. The claim follows.
    
    If $i:U \hookrightarrow \cF^G_P(M,V)$ is a subobject we get a morphism  $\cG^G_P(i):\cG^G_P(\cF^G_P(M,V)) \to \cG^G_P(U).$ 
   
    The dual of $i$ is a homomorphism $i':D(G)\otimes_{D(\frg,P)} M \hat{\otimes} V' \to D(G)\otimes_{D(\frg,Q)} N \hat{\otimes} W'$ 
    which gives rise by the very definition (taking $\varinjlim_k$ of $\fru_p^k$-invariants)
    to the morphism $\cG^G_P(i).$ Since $M\otimes V'=\varinjlim_k H^0(\fru_P^k,M)\otimes V'$ we see that the composite of the natural map
    $M\otimes V' \to \cG^G_P(\cF^G_P(M,V))$ with $\cG^G_P(i)$ is a map $M\otimes V' \to N\otimes W'.$ The latter one  
    induces by taking the composite of $D(G) \otimes_{ D(\frg,P)} -$ with the natural map  $D(G)\otimes_{D(\frg,P)} N \otimes W' \to D(G)\otimes_{D(\frg,Q)} N \otimes W'$  the map $i'$. In particular the map 
    $M\otimes V' \to N\otimes W'$ is non-trivial. By Frobenius reciprocity we get a non-trivial homomorphism 
    $M\otimes i^Q_P(V)' \to N\otimes W'$ of $\frg \times Q$-modules. As the latter module is simple we obtain surjections $M \to N$ and
    $i^Q_P(V)' \to W'.$ The claim follows.
   \end{proof}

\vspace{0.5cm}

\section{Are the functors \texorpdfstring{$\cF^G_P$}{} faithful?}\setcounter{subsection}{1}

In this section we want to address the question whether the functors $\cF^G_P$ are faithful resp. fully
faithful. This aspect was discussed for $G={\rm SL}_2$ already in the series of papers by Morita \cite{Mo1,Mo2,Mo3}.

\begin{thm}\label{fullyfaithful}
 Let $M_1, M_2\in \cO^\frp_\alg$. 
  %Suppose that we are in one of the following situations:
 % \vskip8pt
 % i) $M_2=M(W)$ is a generalized Verma module  for some finite-dimensional locally analytic 
 % $L$-representation $W$.  
 % ii) $M_1,M_2$ are contained in the subcategory $\cO_\alg^\frp.$
 Then the map
 \begin{eqnarray*} \Hom_{\cO^\frp_\alg}(M_1,M_2) & \to &  \Hom_G(\cF^G_P(M_2),\cF^G_P(M_1)) \\
 f & \mapsto &  \cF^G_P(f) 
 \end{eqnarray*}
 is bijective. 
\end{thm}

%{\bf auch folgendes behandeln???}
%$$\Hom_{\cO^P}(M_1,M_2) \otimes \Hom_{\Rep^\infty}(\ind^Q_P(V_1),\ind^Q_P(V_2)) =  
%\Hom_G(\cF^G_P(M_2,V_2),\cF^G_P(M_1,V_1))  $$
%if $M_? \in \cO^Q.$

\begin{proof}
%As for the injectivity we consider the dual map $(f^\ast)':D(G) \otimes_{U(P,\frg)} M_1 \to D(G) \otimes_{U(P,\frg)} M_2$
%which is by definition just by the map $\id \otimes f.$ {\bf weiter ???}
The proof is divided into several steps.
\vskip8pt
1) Let $M_1=M(Z)$ be a generalized  Verma module for some finite-dimensional algebraic
 $L_P$-representation $Z$. Then $\cF^G_P(M_1)=\Ind^G_P(Z')$ and $U_P$ acts trivially on $Z.$  
Now we have $\overline{H}_0(U_P,\cF^G_P(M_2))_\alg= H^0(\fru_P,M_2)'$ by Lemma \ref{H0_alg}.
We consider the  identities induced by Frobenius reciprocity and the previous observations 
\begin{eqnarray*}
 \Hom_G(\cF^G_P(M_2),\cF^G_P(M_1)) & = & \Hom_P(\cF^G_P(M_2),Z') \\  
 & = & \Hom_{L_P}(\overline{H}_0(U_P,\cF^G_P(M_2)),Z')\\
& = & \Hom_{L_P}(\overline{H}_0(U_P,\cF^G_P(M_2))_\alg,Z')\\
 & \cong & \Hom_{L_P}(H^0(\fru_P,M_2)',Z')\\
  & \cong & \Hom_{L_P}(Z, H^0(\fru_P,M_2)) \\
  &= & \Hom_{P}(Z,M_2) \\
  & = & \Hom_{\cO^\frp_\alg}(M_1,M_2). 
 \end{eqnarray*}
Here the third equality follows from that fact that $Z$ is algebraic and $\Hom_G(i^G_P,{\bf 1})=0$ for all parabolic
subgroups $P \subsetneq G.$ .
 
\vskip8pt
2) Let $M_1$ be a quotient of some generalized Verma module, i.e., there is a surjective homomorphism 
$M(Z) \to M_1$ for some finite-dimensional algebraic
 $L$-representation $Z$.  Let  $\frd$ be its kernel. Then by definition we have $\cF^G_P(M_1)=\cF^G_P(M(Z))^\frd$. 
We consider the commutative diagram

\begin{eqnarray*}
 \Hom_{D(\frg,P)}(M_1,M_2) & \hookrightarrow & \Hom_{D(\frg,P)}(M(Z),M_2) \\
 \downarrow & & \downarrow \\
  \Hom_G(\cF^G_P(M_2),\cF^G_P(M_1)) & \hookrightarrow  &  \Hom_G(\cF^G_P(M_2),\cF^G_P(M(Z))).
\end{eqnarray*}

\noindent By Step 1) the right vertical  map is an isomorphism. It follows that the left vertical map is injective.
To show surjectivity we consider the dual objects, i.e. the commutative diagram 

\begin{eqnarray*}
 \Hom_{D(\frg,P)}(M_1,M_2) & \hookrightarrow & \Hom_{D(\frg,P)}(M(Z),M_2) \\
 \downarrow & & \downarrow \\
  \Hom_{D(G)}(M_1^{D(G)}, M_2^{D(G)}) & \hookrightarrow  &  
  \Hom_{D(G)}(M(Z)^{D(G)}, M_2^{D(G)}).
\end{eqnarray*}

\vskip8pt
\noindent where we abbreviate $M^{D(G)}:=D(G)\otimes_{D(\frg,P)} M$ for $M \in \cO^\frp_\alg.$
Moreover the vertical maps are the obvious ones, i.e. induced by base change. For the surjectivity, let 
$f\in  \Hom_{D(G)}(M_1^{D(G)}, M_2^{D(G)})$ and consider it via the injection 
as an element
in the set $\Hom_{D(G)}(M(Z)^{D(G)}, M_2^{D(G)}).$ Hence there is some
morphism $\check{f}:M(Z) \to M_2$ with $\check{f} \otimes \id=f.$ We need to show
that $\check{f}(\frd)=0.$ By assumption we have $f(\frd)=0.$ 
But we proved in \cite[(3.7.6)]{OS3} that
if $M\in \cO_\alg, M\neq 0$ then $D(G) \otimes_{D(\frg,B)} M \neq 0.$ By applying this fact to $M=\check{f}(\frd)$  the claim
follows.

\vskip8pt
3) Let $M_1=U(\frg)\otimes_{U(\frp)} W$ for some finite 
dimensional algebraic $P$-representation $W$. We may view it as a successive extension of generalized Verma modules
considered in Step 1). 
The proof of the statement is by dimension on $\dim W.$ Here Step 1) serves as the start of induction.
Write down an exact sequence
$$0\to M(Z) \to M_1 \to M_1'  \to 0 $$
where $M_1'=U(\frg)\otimes_{U(\frp)} W'$ is a generalized Verma module for some algebraic $P$-representation $W'$ with  
$\dim W'<\dim W.$ We get an induced exact sequence
$$0\to \cF^G_P(M_1')\to \cF^G_P(M_1) \to \cF^G_P(M(Z))  \to 0 .$$
We consider the resulting diagram of long exact sequences 

\vskip8pt

{\scriptsize

$$\begin{array}{cccccc}
  0 \to & \Hom_{D(\frg,P)}(M_1',M_2) & \to & \Hom_{D(\frg,P)}(M_1,M_2) & \to 
 & \Hom_{D(\frg,P)}(M(Z),M_2) \\
 & \downarrow f' &  & \downarrow f & & \downarrow f_Z  \\
 0 \to  & \Hom_G(\cF^G_P(M_2),\cF^G_P(M_1')) & \to 
 &  \Hom_G(\cF^G_P(M_2),\cF^G_P(M_1))
  & \to &  \Hom_G(\cF^G_P(M_2),\cF^G_P(M(Z)))  \\ \\
    \stackrel{\delta}{\to}  & {\rm Ext}^1(M_1',M_2) & \to & {\rm Ext}^1(M_1,M_2) & \to 
 & {\rm Ext}^1(M(Z),M_2) \\
 & \downarrow & & \downarrow & & \downarrow  \\
  \stackrel{\delta_{\cF}}{\to} & {\rm Ext}^1(\cF^G_P(M_2),\cF^G_P(M_1')) & \to 
 &  {\rm Ext}^1(\cF^G_P(M_2),\cF^G_P(M_1))
  & \to &  {\rm Ext}^1(\cF^G_P(M_2),\cF^G_P(M(Z))) 
\end{array}$$}

\normalsize

\vskip8pt
\noindent Here we consider the Ext groups as Yoneda-Ext groups. The maps $f'$ and $f_Z$ are by induction isomorphisms of finite-dimensional vector spaces. 
By diagram chase, it suffices to check that $\delta(g)\neq 0$ if and only if $\delta_{\cF}(\cF^G_P(g)) \neq 0.$
Concretely we have to show that if $\delta(g) \neq 0$ then $\delta_{\cF}(\cF^G_P(g)) \neq 0$ since the other direction 
follows directly by diagram chase again. If $\delta_{\cF}(\cF^G_P(g))=0$, then the extension 
$$0\to \cF^G_P(M'_1) \to E_{\cF^G_P(g)} \to \cF^G_P(M_2) \to 0  $$
induced by $\cF^G_P(g)\in \Hom_G(\cF^G_P(M_2),\cF^G_P(M(Z))$ 
splits. Then we apply Remark \ref{geht_auch} to deduce that 
\begin{eqnarray*}
H^0(\fru,E_{\cF^G_P(g)}) & = & H^0(\fru,\cF^G_P(M'_1)) \oplus H^0(\fru,\cF^G_P(M_2))
\\ & = & H^0(\fru,M'_1) \oplus H^0(\fru,M_2).
\end{eqnarray*}
Since $H^0(\fru,E_g)\subset H^0(\fru,E_{\cF^G_P(g)})$ we deduce that $H^0(\fru,E_g)=  H^0(\fru,E_{\cF^G_P(g)})$
and  that the extension
$$0 \to M_2 \to E_g \to M_1' \to 0 $$
splits as well. Indeed suppose for simplicity that $W'$ is induced via inflation by a $L_P$-representation. Then $W'=H^0(\fru,M'_1)$
appears in $E_g$ so that we get a  section of $E_g \to M_1'.$

\begin{comment}
Let $f,f'\in \Hom_G(\cF^G_B(M_2),\cF^G_B(U(\frg)\otimes_{U(\frb)} W))$ and consider their images $p(f),p(f')$ in 
$\Hom_G(\cF^G_B(M_2),\cF^G_B(M(\mu)))$. The latter object is a one-dimensional vector space by Step 1).
Hence they differ at most by a scalar, i.e., $p(f)=\lambda p(f')$ or $p(f')=\lambda p(f)$ for some $\lambda \in E.$ Hence we deduce that $f-\lambda f'$
is induced by a homomorphism $g\in \Hom_G(\cF^G_B(M_2),\cF^G_B(U(\frg)\otimes_{U(\frb)} W')).$
\end{comment}

\vskip8pt
4) Let $M_1$ be arbitrary. Then there is a surjective homomorphism $M(Z) \to M$ for some finite 
dimensional algebraic $P$-representation $Z$. Then we proceed as in Step 2).

%\vskip8pt
%ii) Here we proceed as in the first case. In Step 1) and Step 3) we use Proposition \ref{H0_alg}
%$instead of the property that $M_2$ is a generalized Verma module as the smooth part does not matter. 
\end{proof}

Next we consider the situation where also smooth admissible representations are involved.

\begin{prop}\label{remark_conj_2}
Let $M_1,M_2\in \cO^\frp_\alg$ and let $V_1,V_2$ be smooth admissible $L_P$-represen\-tations. Assume that
$Z\sub M_1$ is a finite-dimensional algebraic $P$-representation which generates $M_1$ as a $U(\frg)$-module and $k \geq 1$ an integer such that $H^0(\fru_P^k,Z)=Z.$
Then the natural map
 \begin{eqnarray*} \Hom_{\cO^\frp_\alg}(M_1,M_2) \otimes \Hom_{L_P}(V_2,V_1) & \to &  \Hom_G(\cF^G_P(M_2,V_2),\cF^G_P(M_1,V_1)) 
 \end{eqnarray*}
 induced by the functor $\cF^G_P$
 is injective and extends to a bijection
$$ \bigoplus_{W \subset H^0(\fru_P^k,M_2)} \Hom_{\cO^\frp_\alg}(M_1,M_2)_W \otimes \Hom_P(S_W(V_2)_{|P},V_1)  \to  \Hom_G(\cF^G_P(M_2,V_2),\cF^G_P(M_1,V_1))$$
where $W$ ranges over all maximal indecomposable $P$-modules of $H^0(\fru_P^k,M_2)$ such that there is a non-trivial homomorphism $Z \to W$
and $\Hom_{\cO^\frp_\alg}(M_1,M_2)_W \subset 
\Hom_{\cO^\frp_\alg}(M_1,M_2)$ is just the subspace consisting of those maps which are induced by this homomorphism.
\end{prop}

\begin{proof}
Indeed we consider Steps 1) and  3) in the modified situation. Then we argue as in Steps 2) and  4) for the general case.
As for Steps 1) and 3) we choose this time a slightly different approach by way of variation.
So, let $M_1=U(\frg)\otimes_{U(\frp)} Z$ for some finite-dimensional $P$-module $Z$. 
Let $k\geq 1$ be an integer such that $H^0(\fru_P^k,Z)=Z.$ Then we apply Proposition \ref{H0_uk} to deduce that
\begin{eqnarray*}
 \Hom_G(\cF^G_P(M_2,V_2),\cF^G_P(M_1,V_1)) & = & \Hom_P(\cF^G_P(M_2,V_2),Z'\otimes V_1) \\  
 & = & \Hom_{D(P)}(Z\otimes V_1',\cF^G_P(M_2,V_2)')\\
 & \cong & \Hom_{D(P)}(Z\otimes V_1',H^0(\fru_P^k,\cF^G_P(M_2,V_2))')\\
  & \cong & \Hom_{D(P)}(Z\otimes V_1', \bigoplus_{W\sub H^0(\fru_P^k,M_2)} W \otimes S_W(V_2)_{|P}')  \\
  & = & \bigoplus_{W\sub H^0(\fru_P^k,M_2)} \Hom_{\cO^\frp_\alg}(M_1,M_2)_W \otimes \Hom_{P}(S_W(V_2)_{|P},V_1) . 
 \end{eqnarray*}
 \end{proof}

\begin{comment}
\begin{rmk}
 If the conjecture in Remark \ref{conjecture_2}  is satisfied, then we may replace $S_Z(V_2)$ by $i^{P_Z}_P(V_2)$ 
 and thus $\Hom_P(S_Z(V_2),V_1)$ by $\Hom_{P_Z}(i^{P_Z}_P(V_2),i^{P_Z}_P(V_1))$ in the above formula.
\end{rmk}
\end{comment}

\begin{rmk}\label{PQ_geht_auch}
 The statement above is also true (with the same proof) if we consider additionally a parabolic subgroup $Q\supset P$
 such that $M_2\in \cO^\frq_\alg$, $V_2 \in \Rep^{\infty,a}(L_Q)$  i.e. we have a bijection
 $$\bigoplus_{W\sub H^0(\fru_P^k,M_2)} \Hom_{\cO^\frp_\alg}(M_1,M_2)_W \otimes \Hom_P(S_W(V_2)_{|P},V_1)  \to  \Hom_G(\cF^G_Q(M_2,V_2),\cF^G_P(M_1,V_1)).$$
 \end{rmk}

%Es gilt ???

% $$\Hom_{?}(M_1\otimes V_1,M_2\otimes V_2)  =  \Hom_G(\cF^G_P(M_2,V_2),\cF^G_P(M_1,V_1)) $$

\vskip8pt

 The following example shows that in the general case of objects in $\cO_\alg^\frb$, the first map in 
 Proposition \ref{remark_conj_2} need not to be surjective.

\begin{exam}\label{exam}
Let $G ={\rm SL}_2$, $B\sub G$ the Borel subgroup of upper triangular matrices and let $T = \{\diag(a,a^{-1})\mid a \in L^\ast\}$ 
be the diagonal torus. We consider the smooth character $\chi$ of $T$ given by
$$\chi(\diag(a,a^{-1}) = |a|(-1)^{val_\pi(a)}$$
where $\pi$ is our fixed uniformizer of $O_L$ and $v$
is the normalized valuation, i.e.  $v(\pi) =1$.

Let $M$ be the one-dimensional trivial ${\rm Lie}(G)$-representation. 
Then the object $\cF^G_B(M,\chi)$ is just the smooth representation $i^G_B(\chi).$
But the character $\chi$ is chosen in such a way that $i^G_B(\chi)$ decomposes as a direct sum 
of two irreducible representations \cite[Cor. 9.4.6 (b)]{Cas}.
Hence $\Hom_G(\cF^G_B(M),\cF^G_B(M))$ is two-dimensional whereas $\Hom_{\cO_\alg}(M,M)\otimes \Hom_T(\chi,\chi)$
is one-dimensional.

%\vskip8pt
%ii) Let $G={\rm SL}_2$ and let $\delta$ be the non-trivial smooth character appearing
%in the Jacquet module of $i^G_B.$ Again let $M_1=M_2$ be the one-dimensional trivial module.  Then 
%$\cF^G_B(M_1)=\Ind^G_B(\delta^{-1})$ and 
%$\cF^G_B(M_2)=i^G_B$ so that
%$\Hom_{\cO^B}(M_1,M_2)=0$ whereas $\dim \Hom_G(\cF^G_B(M_2),\cF^G_B (M_1))=1.$
\end{exam}

 Recall the definitions before Proposition \ref{h0_verma_alle}. For $w\in W$, we denote by $P^w$ the conjugated parabolic subgroup $w^{-1}Pw.$ If $Z$ is
 a finite-dimensional locally analytic representation of $L$ we let $M_w(Z)$ be the corresponding
 generalized Verma module
 with respect to $P^w$, i.e. $M_w(Z)= U(\frg) \otimes_{U(\frp^w)} Z.$
 
 \begin{prop}\label{Hom_twisted_Verma}
   Let  $Z$ be a finite-dimensional algebraic $L_P$-representation and let $w \in W.$ Then
   for any finite-dimensional algebraic $L_P^w$-representation $Y$ there is an identity
 $${\rm Hom}_G\big(\Ind^G_{P^w}(Y'), \Ind^G_P(Z')\big)=
 \Hom_{\cO^{\frp^w}}(M_w(Z^{w^{-1}}),M_w(Y)).$$
 \end{prop}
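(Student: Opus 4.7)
The strategy mimics Step 1) of the proof of Theorem \ref{fullyfaithful}, adapted to handle the conjugation by $w$. First rewrite both sides using the functors: the left-hand side becomes $\Hom_G(\cF^G_{P^w}(\uM_w(Y)),\cF^G_P(\uM(Z)))$, and the plan is to connect it to $\Hom_{\cO^{P^w}}(\uM_w(Z^{w^{-1}}),\uM_w(Y))$ by a chain of natural bijections, reducing the $G$-Hom on the outside to a $P^w$-Hom on the inside.

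First, since $U_P$ acts trivially on $Z'$, Frobenius reciprocity for $\Ind^G_P$ together with the $L_P$-adjunction property of the naive Jacquet functor yields
\begin{equation*}
\Hom_G\bigl(\cF^G_{P^w}(\uM_w(Y)),\Ind^G_P(Z')\bigr) \;=\; \Hom_{L_P}\bigl(\overline{H}_0(U_P,\cF^G_{P^w}(\uM_w(Y))),\,Z'\bigr).
\end{equation*}
Proposition \ref{h0_verma_alle}, applied with trivial smooth part to the generalized Verma module $\uM_w(Y) \in \cO^{P^w}$, then computes this Jacquet quotient as $(H^0(\fru_{P^w},\uM_w(Y))')^w$ as an $L_P$-representation (the $H^0$-formulation being equivalent to the $H_0$-formulation of loc.\,cit.\ via the duality in Lemma \ref{duality}).

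Next I would untwist the $L_P$-action through the isomorphism $L_P \stackrel{\sim}{\longrightarrow} L_P^w$, $g \mapsto w^{-1}gw$. This converts $\Hom_{L_P}(V^w,Z')$ into $\Hom_{L_P^w}(V,(Z')^{w^{-1}})$, and a direct check on the contragredient action gives the identity $(Z')^{w^{-1}} = (Z^{w^{-1}})'$ of $L_P^w$-modules; combined with finite-dimensional duality (note $H^0(\fru_{P^w},\uM_w(Y))$ is finite-dimensional by Corollary \ref{ncohomology}), this brings the expression to
\begin{equation*}
\Hom_{L_P^w}\bigl(Z^{w^{-1}},\,H^0(\fru_{P^w},\uM_w(Y))\bigr).
\end{equation*}

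Finally, since $Z^{w^{-1}}$ is naturally a $P^w$-representation with $\fru_{P^w}$ acting trivially, any such $L_P^w$-map into $H^0(\fru_{P^w},\uM_w(Y))$ is the same as a $P^w$-map into $\uM_w(Y)$; and Frobenius reciprocity for the generalized Verma module then gives
\begin{equation*}
\Hom_{P^w}(Z^{w^{-1}},\uM_w(Y)) \;=\; \Hom_{\cO^{P^w}}(\uM_w(Z^{w^{-1}}),\uM_w(Y)),
\end{equation*}
completing the identification. The main obstacle is the careful bookkeeping of $w$-twists: making sure that the twist $V \mapsto V^w$ coming out of Proposition \ref{h0_verma_alle} on the $L_P$-side is absorbed correctly as the twist $Z \mapsto Z^{w^{-1}}$ on the $P^w$-side, and that dualization commutes with this twist as needed. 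Once these conventions are aligned, every remaining step is a routine application of adjunction and Frobenius reciprocity.
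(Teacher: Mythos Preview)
Your proposal is correct and follows essentially the same route as the paper's own proof: Frobenius reciprocity into $\Ind^G_P(Z')$, passage to the naive Jacquet module, application of Proposition \ref{h0_verma_alle} to identify it with $(H^0(\fru_{P^w},\uM_w(Y))')^{w}$, untwisting by $w$ and dualizing, and finally Frobenius reciprocity for the generalized Verma module. The bookkeeping you flag (that $(Z')^{w^{-1}} = (Z^{w^{-1}})'$ and that the $w$-twist on the $L_P$-side becomes the $w^{-1}$-twist on the $L_P^w$-side) is exactly the point the paper handles in its chain of equalities.
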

 
 \begin{proof}
  We argue as in Step 1) in the proof of Theorem \ref{fullyfaithful} and use additionally Proposition \ref{h0_verma_alle}: 
\begin{eqnarray*}
 {\rm Hom}_G\big(\Ind^G_{P^w}(Y'), \Ind^G_P(Z')\big) 
 & = & {\rm Hom}_P\big(\Ind^G_{P^w}(Y'), Z'\big) \\  
 & = & \Hom_L(\overline{H}_0(U_P,\Ind^G_{P^w}(Y')), Z')\\
 & \cong & \Hom_{L}(H_0(\fru_P^w,(M_w(Y)')^w),Z')\\
  & \cong & \Hom_{L}(Z, H^0(\fru_P^w,M_w(Y)^w) \\
  & = & \Hom_{L}(Z^{w^{-1}}, H^0(\fru_P^w,M_w(Y))) \\
  &= & \Hom_{P^w}(Z^{w^{-1}},M_w(Y)) \\
  & = & \Hom_{\cO^{\frp^w}}(M_w(Z^{w^{-1}}),M_w(Y)). 
 \end{eqnarray*}
 \end{proof}

\vspace{0.5cm}

\section{Applications}

In the remaining paper we discuss some applications of the material collected in the previous sections. 

\subsection{The category $\cF^P_\alg$}

We begin to recall a definition of \cite{OS3}.
Let $\lambda, \mu:T \to K^ \ast$ be two algebraic characters with derivatives $d\lambda$, $d\mu$, respectively. 
We write $\mu \uparrow_B \lambda$ if and only if  $d\mu \uparrow_{\frb} d\lambda$ in the sense of \cite{H1}.
%and 
%$\mu - \lambda \in X^\ast(T)$ is an  algebraic character. Then the natural homomorphism
%$M(d\mu) \to M(d\lambda)$ lifts to a morphism $M(\mu) \to M(\lambda)$.
Then one has  
\begin{equation}\label{dim1}
 \dim_K {\rm Hom}_{\cO^\frb_\alg}(M(\mu), M(\lambda)) = \left\{\begin{array}{cc} 1 & \mu \uparrow_B \lambda 
 \\ \\0 & otherwise 
\end{array}\right. .
\end{equation}

%Analogously to the above definition  we extend 
%the ``dot'' action of $W$ on $X^\ast(T)$ to all locally analytic characters.
%Let $\lambda$ be a locally analytic character and let $w\in W$. The difference between $w\cdot_B(d\lambda)$ and 
%$d\lambda$ is algebraic. Hence there is some algebraic  character $\chi \in X^\ast(T)$ such that
%$w\cdot_B(d\lambda) = d \lambda + d \chi.$ We set
%$$w\cdot_B \lambda := \lambda \cdot \chi. $$
For the remainder, we denote for an element $w \in W$ and an algebraic character by $w\cdot_B \lambda$ the usual 
``dot''-operation with respect to $B$.
If $\lambda$ is $B$-dominant, then $w\cdot_B \lambda \uparrow_B \lambda$ for all $w \in W.$

\begin{comment}
\begin{lemma}
 The above construction induces an action of $W$ on the space of locally analytic characters.
\end{lemma}

\begin{proof}
The proof is left as an exercise.
\end{proof}
\end{comment}

On the other hand, we let $\lambda^w:=w(\lambda)$ be the character given by the ordinary action of $W$.

\begin{cor}
 Let $P=B$ and let $\lambda,\mu: T \to K^\ast$ be algebraic characters. Then 
 $$\dim_K {\rm Hom}_G\big(\cF^G_B(M(\lambda)), \cF^G_B(M(\mu))\big)=\left\{\begin{array}{cc} 1 & \mu \uparrow_B \lambda  \\0 & otherwise 
\end{array}\right.$$
 \end{cor}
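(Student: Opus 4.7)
The plan is to combine Theorem \ref{fullyfaithful}(i) with the dimension formula (\ref{dim1}) collected at the start of this section. The key observation is that the Verma module $\uM(\lambda) = U(\frg) \otimes_{U(\frb)} K_\lambda$, where $K_\lambda$ denotes the one-dimensional locally analytic $B$-representation obtained by inflation of $\lambda$ from $T = L_B$, is a generalized Verma module in precisely the sense required by Theorem \ref{fullyfaithful}(i) (one takes the finite-dimensional locally analytic $L_B$-representation $W = K_\lambda$). The same of course applies to $\uM(\mu)$.

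Therefore, applying Theorem \ref{fullyfaithful}(i) with $\uM_1 = \uM(\mu)$ and $\uM_2 = \uM(\lambda)$ (the roles being fixed by the convention that $\uM_2$ is the module required to be a generalized Verma module), we obtain a bijection
\[
\Hom_{\cO^B}(\uM(\mu), \uM(\lambda)) \;\stackrel{\sim}{\lra}\; \Hom_G\big(\cF^G_B(\uM(\lambda)), \cF^G_B(\uM(\mu))\big),
\]
sending $f \mapsto \cF^G_B(f)$. Taking $K$-dimensions on both sides and invoking the formula (\ref{dim1}) delivers exactly the claimed dichotomy.

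There is no substantive obstacle here, since both of the ingredients used are already established in the paper; the entire content of the corollary is the observation that the full faithfulness of $\cF^G_B$ on ordinary Verma modules (a special case of Theorem \ref{fullyfaithful}(i)) transports the classical Hom-space computation in $\cO^B$ to a statement about the locally analytic $G$-representations $\cF^G_B(\uM(\lambda))$ and $\cF^G_B(\uM(\mu))$.
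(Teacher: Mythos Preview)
Your proof is correct and follows exactly the same route as the paper's own proof, which simply cites Theorem \ref{fullyfaithful} and identity (\ref{dim1}). Your identification of the roles $\uM_1=\uM(\mu)$, $\uM_2=\uM(\lambda)$ and the observation that $\uM(\lambda)$ is a generalized Verma module with $W=K_\lambda$ are precisely what is needed to invoke Theorem \ref{fullyfaithful}(i).
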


 \begin{proof}
  This follows from Theorem \ref{fullyfaithful} together with identity (\ref{dim1}). 
 \end{proof}

For a standard parabolic subgroup $P\sub G$, we let $\cF^P_\alg$  be the full subcategory of $\Rep_K^{\rm loc. an.}(G)$ 
consisting of locally analytic representations
which lie in the essential image of the functor  $\cF^G_P:\cO^\frp_\alg \to \Rep_K^{\rm loc.an.}(G)$.

\begin{cor}
i) The category $\cF^P_\alg$ is abelian and has enough injective and projective objects. For a morphism $f: N \to M$ we have
$\cF^G_P({\rm coker}(f))=\ker(\cF^G_P(f))$ and  $\cF^G_P({\rm ker}(f))={\rm coker}(\cF^G_P(f)).$ 

\vskip8pt
ii)   Let $M$ be a projective (resp. injective) object in $\cO^\frp_\alg$. Then $\cF^G_P(M)$ is 
  injective (resp. projective)
  in the category $\cF^P_\alg$.
\end{cor}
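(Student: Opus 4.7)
The plan is to leverage Theorem \ref{fullyfaithful}(ii) together with the exactness of $\cF^G_P$ (Proposition \ref{exact_in_both}) to realize the restriction
\[
\cF^G_P : \cO^\frp_\alg \lra \cF^P_\alg
\]
as a contravariant equivalence of categories. Indeed, full faithfulness is provided by Theorem \ref{fullyfaithful}(ii), and essential surjectivity holds by definition of $\cF^P_\alg$. Via this anti-equivalence the abelian structure and homological properties of $\cO^\frp_\alg$ can be transported to $\cF^P_\alg$.

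For part i), the kernel/cokernel identities are immediate from exactness and contravariance: applying $\cF^G_P$ to the exact sequences $N \to M \to \mathrm{coker}(f) \to 0$ and $0 \to \ker(f) \to N \to M$ yields
\[
0 \to \cF^G_P(\mathrm{coker}(f)) \to \cF^G_P(M) \to \cF^G_P(N), \quad \cF^G_P(M) \to \cF^G_P(N) \to \cF^G_P(\ker(f)) \to 0,
\]
giving $\cF^G_P(\mathrm{coker}(f)) = \ker(\cF^G_P(f))$ and $\cF^G_P(\ker(f)) = \mathrm{coker}(\cF^G_P(f))$. These formulas show that $\cF^P_\alg$ is stable under kernels and cokernels inside $\Rep_K^{\mathrm{loc.\,an.}}(G)$, hence is abelian. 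The existence of enough projectives and injectives will then follow from part ii) once we recall the classical fact (from the theory of the BGG category $\cO$) that $\cO^\frp_\alg$ has enough projectives and enough injectives (the latter via the duality functor $M \mapsto M^\vee$ on $\cO$, which preserves integral blocks).

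For part ii), I would argue directly using the anti-equivalence. Suppose $\uM \in \cO^\frp_\alg$ is projective and let $f: A \hookrightarrow B$ be a monomorphism in $\cF^P_\alg$ with $A = \cF^G_P(\uN_A)$, $B = \cF^G_P(\uN_B)$. By full faithfulness $f = \cF^G_P(\tilde f)$ for a unique $\tilde f : \uN_B \to \uN_A$, and by the kernel/cokernel formulas established in i), the fact that $f$ is mono translates into $\tilde f$ being epi in $\cO^\frp_\alg$. Given any morphism $g = \cF^G_P(\tilde g): A \to \cF^G_P(\uM)$, projectivity of $\uM$ produces a lift $\tilde h: \uM \to \uN_B$ with $\tilde f \circ \tilde h = \tilde g$, and then $\cF^G_P(\tilde h): B \to \cF^G_P(\uM)$ is the required extension, establishing injectivity of $\cF^G_P(\uM)$. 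The dual argument (starting from an epi $B \twoheadrightarrow A$) shows that if $\uM$ is injective then $\cF^G_P(\uM)$ is projective.

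The main obstacle is essentially bookkeeping rather than substance: one must be careful that Theorem \ref{fullyfaithful}(ii) genuinely produces a contravariant equivalence of \emph{abelian} categories, i.e.\ that monos and epis correspond correctly under $\cF^G_P$; this is precisely what the kernel/cokernel formulas in i) guarantee, so the two parts must be proved in the order stated. A minor secondary point is invoking the correct references for enough projectives and injectives in $\cO^\frp_\alg$ (one uses the standard construction of indecomposable projectives via Verma filtrations and the contravariant duality of $\cO$, both of which respect the integrality condition defining $\cO^\frp_\alg$).
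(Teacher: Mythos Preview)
Your proposal is correct and follows essentially the same route as the paper: both arguments rest on the observation that Theorem \ref{fullyfaithful}(ii) together with exactness makes $\cF^G_P$ a contravariant equivalence between $\cO^\frp_\alg$ and $\cF^P_\alg$, after which the abelian structure, the kernel/cokernel formulas, and the projective/injective correspondence are transported from $\cO^\frp_\alg$. Your write-up is somewhat more explicit than the paper's (which simply asserts the equivalence and invokes exactness), but the substance is the same.
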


\begin{proof}
The category $\cO^\frp_\alg$ is abelian and has enough projective and injective objects. This follows for $\cO^\frp$
 from \cite{H1}. But the proof shows that for an object $M\in \cO^\frp_\alg$ the construction of a projective cover $N$ of
 $M$, that  $N$ is again in the subcategory $\cO^\frp_\alg.$ hence the claim is true
 for the category $\cF^P_\alg$.
Since the functor $\cF^G_P$
induces an equivalence of categories between $\cO^\frp_\alg$ and $\cF^P_\alg$ we get the first part of i)
and ii).
The remaining statements follow directly be the exactness of $\cF^G_P.$
\end{proof}

 We define a dual object for objects lying in the functor. In light of Theorem \ref{fullyfaithful} it is well-defined.

\begin{dfn}
 Let $M\in \cO^\frp_\alg$ and let $M^\vee \in \cO^\frp_\alg$ be its dual object. Set
 $$\cF^G_P(M)^\vee:=\cF^G_P(M^\vee).$$ 
\end{dfn}

\vskip8pt
It follows from the previous corollary that for an object $M\in \cO^\frp_\alg$ the locally analytic $G$-representation 
$\cF^G_P(M)$ is projective (resp. injective) object in $\cF^P_\alg$ if and only if 
$\cF^G_P(M)^\vee$ is injective (resp. projective) object in $\cF^P_\alg$.

 %\begin{proof}
%Since $M$ is projective (resp. injective) if and only if $M^\vee$ is injective resp. projective
 % it suffices to prove the statement in the case that $M$ is projective. 
  
 %Let $M$ be projective. For showing that $\cF^G_P(M)$ is injective let $\cF^G_P(M_2) \hookrightarrow \cF^G_P(M_1)$
%be a topological injection and let $\cF^G_P(M_2) \to \cF^G_P(M)$ be a map. By Theorem \ref{} these are 
%induced by a surjection $M_1 \to M_2$ 
%{\bf diesen fakt ertc. noch als prop. einbauen!!!} and a map $M \to M_2$, respectively.

%Since $M$ is projective it lifts to a morphism $M \to M_1$. Hence $f$ lifts to a morphism 
%$\cF^G_P(M_2) \to \cF^G_P(N).$
%\end{proof}

\begin{dfn}
Let $V_1,V_2 \in \cF^P_\alg$ be two locally analytic representations. We denote by
$\Ext^i_{\cF^P_\alg}(V_1,V_2)$ the corresponding Ext-group in degree $i.$
\end{dfn}

 These Ext-groups are of course different from those considered more generally in the category of locally analytic 
$G$-representations, cf. \cite{K2}. This can be seen as an analogue of relating the groups $\Ext^i_{\frg}(M_1,M_2)$
and $\Ext^i_{\cO}(M_1,M_2)$ for two objects $M_1,M_2 \in \cO$ as the next statement confirms.

\begin{cor} Let $M_1,M_2 \in \cO^\frp_\alg.$
  The natural map
 \begin{eqnarray*} \Ext^i_{\cO^\frp_\alg}(M_1,M_2) & \to &  \Ext^i_{\cF^P_\alg}(\cF^G_P(M_2),\cF^G_P(M_1))
 \end{eqnarray*}
 is bijective.\qed
\end{cor}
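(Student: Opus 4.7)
The plan is to exploit the fact that, by the preceding corollary, $\cF^G_P$ restricted to $\cO^\frp_\alg$ is an exact, arrow-reversing equivalence onto $\cF^P_\alg$ which swaps projectives with injectives. The statement then becomes the standard comparison between derived functors on two abelian categories related by such an (anti-)equivalence, and it is enforced at the level of complexes by the full faithfulness proved in Theorem \ref{fullyfaithful}(ii).

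Concretely, I would first choose a projective resolution
\[
\cdots \to P_1 \to P_0 \to M_1 \to 0
\]
in $\cO^\frp_\alg$, which exists by the preceding corollary. Applying $\cF^G_P$ and using its exactness (Proposition \ref{exact_in_both}) together with the identities $\cF^G_P(\ker)={\rm coker}(\cF^G_P)$, $\cF^G_P({\rm coker})=\ker(\cF^G_P)$ from part (i) of the preceding corollary, we obtain an exact complex
\[
0 \to \cF^G_P(M_1) \to \cF^G_P(P_0) \to \cF^G_P(P_1) \to \cdots
\]
in $\cF^P_\alg$; by part (ii) of the same corollary each $\cF^G_P(P_i)$ is injective, so this is a genuine injective resolution of $\cF^G_P(M_1)$. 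Hence
\[
\Ext^i_{\cF^P_\alg}\bigl(\cF^G_P(M_2),\cF^G_P(M_1)\bigr) \;=\; H^i\bigl(\Hom_G(\cF^G_P(M_2),\cF^G_P(P_\bullet))\bigr),
\]
while by definition
\[
\Ext^i_{\cO^P}(M_1,M_2) \;=\; H^i\bigl(\Hom_{\cO^P}(P_\bullet, M_2)\bigr).
\]

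Now Theorem \ref{fullyfaithful}(ii) supplies natural bijections $\Hom_{\cO^P}(P_i,M_2) \xrightarrow{\sim} \Hom_G(\cF^G_P(M_2),\cF^G_P(P_i))$ for every $i$, and their naturality (they are simply $f \mapsto \cF^G_P(f)$) makes them compatible with the differentials of the two complexes. This yields an isomorphism of complexes, and passing to cohomology gives the desired identification.

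The main thing to be careful about is the bookkeeping associated with the arrow-reversing behaviour of $\cF^G_P$: one must verify that a projective resolution of $M_1$ really is transformed into an injective coresolution of $\cF^G_P(M_1)$ (rather than, say, a resolution of $\cF^G_P(M_2)$ by projectives), and that the resulting $\Hom$ complex genuinely computes the Ext group on the $\cF^P_\alg$ side. Both points are taken care of by parts (i) and (ii) of the preceding corollary, so no additional work beyond invoking Theorem \ref{fullyfaithful}(ii) is required.
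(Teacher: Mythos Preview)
Your argument is correct and is essentially the same as the paper's: the paper states this corollary without proof, treating it as an immediate consequence of the preceding corollary, which establishes that $\cF^G_P$ is an exact anti-equivalence $\cO^P_\alg \to \cF^P_\alg$ interchanging projectives and injectives. Your write-up simply spells out the standard homological argument underlying that implication.
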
 

At this point one can derive many consequences on the above defined Ext-groups. Here we exemplary mention only the following:

\begin{cor} Let $\lambda$ be a  dominant algebraic character and let $w,w'\in W.$

\vskip8pt
 a) Unless $w'\cdot \lambda \uparrow w\cdot\lambda$ we have for all $n>0$,
 $$\Ext^n_{\cF^B_\alg}(\cF^G_B(M(w\cdot \lambda)),\cF^G_B(M(w'\cdot \lambda))=0=\Ext^n_{\cF^B_\alg}
 (\cF^G_B(\uL(w\cdot \lambda)),\cF^G_B(M(w'\cdot \lambda)). $$
 
 b) If $w'\cdot \lambda \leq w\cdot \lambda$, then for all $n> \ell(w') -\ell(w)$ 
  $$\Ext^n_{\cF^B_\alg}(\cF^G_B(M(w\cdot \lambda)),\cF^G_B(M(w'\cdot \lambda))=0=
  \Ext^n_{\cF^B_\alg}(\cF^G_B(\uL(w\cdot \lambda)),\cF^G_B(M(w'\cdot \lambda)). $$
\end{cor}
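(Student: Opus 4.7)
The plan is to reduce the statement to the classical computation of Ext-groups between Verma modules (and between simple and Verma modules) in the BGG category $\cO$, which can then be invoked from \cite{H1}. The two intermediate categories we must pass through are $\cF^B_\alg$ and $\cO^B$ (restricted to its integral part $\cO^B_\alg$), and the preceding corollary already handles the first transition. The second transition, from $\cO^B_\alg$ to the ordinary BGG category $\cO^\frb$, is the step that needs the most attention; after that, both (a) and (b) should follow by citing standard results.

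First, I would apply the previous corollary to rewrite each Ext-group as
\[
\Ext^n_{\cF^B_\alg}\bigl(\cF^G_B(M(w\cdot\lambda)),\cF^G_B(M(w'\cdot\lambda))\bigr) \;\cong\; \Ext^n_{\cO^B}\bigl(M(w'\cdot\lambda),M(w\cdot\lambda)\bigr),
\]
and likewise with $\uL(w\cdot\lambda)$ in place of $M(w\cdot\lambda)$ in the second argument of $\cF^G_B$. Note that by the assumption $\lambda\in\Lambda^+$ both $w\cdot\lambda$ and $w'\cdot\lambda$ are integral, so the objects involved lie in $\cO^B_\alg$.

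Next I would argue that Ext-computations inside $\cO^B$ between objects of $\cO^B_\alg$ agree with the classical Ext-groups in $\cO^\frb$. For this I would invoke the fully faithful embedding $\cO^\frb_\alg\hookrightarrow\cO^B$ recalled in Section 2, the fact that extensions of integral-weight modules have integral weights (so $\cO^\frb_\alg$ is closed under extensions in $\cO^\frb$), and the canonical nature of the $B$-lift of an object in $\cO^\frb_\alg$ (which forces any short exact sequence of lifted objects to come, uniquely, from a short exact sequence in $\cO^\frb_\alg$). Running this argument at the level of Yoneda Ext yields
\[
\Ext^n_{\cO^B}\bigl(M(w'\cdot\lambda),M(w\cdot\lambda)\bigr) \;\cong\; \Ext^n_{\cO^\frb}\bigl(M(w'\cdot\lambda),M(w\cdot\lambda)\bigr),
\]
and similarly when $M(w\cdot\lambda)$ is replaced by its simple quotient $L(w\cdot\lambda)$. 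This identification is what I expect to be the main obstacle, since the categories $\cO^B$ and $\cO^\frb$ are a priori quite different and one has to be careful that the action of $B$ on an extension class is forced by the Lie-algebra action together with integrality.

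With these two reductions in hand, both assertions become classical statements in the BGG category. For (a), the linkage principle (see \cite[Ch.~6]{H1}) gives $\Ext^n_{\cO^\frb}(M(\mu),M(\nu))=0=\Ext^n_{\cO^\frb}(L(\mu),M(\nu))$ for all $n\geq 0$ unless $\nu\uparrow_\frb\mu$; applied to $\mu=w'\cdot\lambda$ and $\nu=w\cdot\lambda$, this produces precisely the vanishing in (a). For (b), the length estimate of Delorme (see \cite[Thm.~6.11]{H1}, together with the analogous bound for Ext from a simple module) says that under $w'\cdot\lambda\leq w\cdot\lambda$ one has $\Ext^n_{\cO^\frb}(M(w'\cdot\lambda),M(w\cdot\lambda))=0$ whenever $n>\ell(w')-\ell(w)$, and similarly with the first argument replaced by $L(w'\cdot\lambda)$; chasing this back through the two isomorphisms above gives (b).
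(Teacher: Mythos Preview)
Your approach coincides with the paper's: the proof there is the single sentence ``This is a consequence of \cite[Proposition 6.11]{H1}'', and you are simply making explicit the reduction through the preceding Ext-corollary. Your step~2, which you flag as the main obstacle, is in fact no obstacle at all. The preceding corollary rests on the equivalence $\cO^B_\alg\simeq\cF^B_\alg$, and by the paper's convention in Section~2 one has $\cO^B_\alg:=\cO^\frb_\alg$ literally; so the ``$\Ext^i_{\cO^B}$'' appearing there already means $\Ext^i_{\cO^\frb_\alg}$. The remaining passage to $\Ext^i_{\cO^\frb}$ is immediate from the block decomposition of $\cO$ (the integral blocks are direct summands, and the paper notes just before the Ext-corollary that projective covers of integral objects are again integral).

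There are two bookkeeping slips in your final paragraph. First, translating the second Ext-group of the corollary through the contravariant $\cF^G_B$ yields $\Ext^n_{\cO}(M(w'\cdot\lambda),L(w\cdot\lambda))$, i.e.\ Verma in the first slot and simple in the second; this is precisely the form treated in \cite[Prop.~6.11]{H1}, not $\Ext^n_{\cO^\frb}(L(\mu),M(\nu))$ as you wrote. Second, the linkage condition in the classical vanishing should read $\mu\uparrow\nu$ (consistent with the $\Hom$ case $\Hom(M(\mu),M(\nu))\neq 0\Leftrightarrow\mu\uparrow\nu$), not $\nu\uparrow\mu$; with your assignment $\mu=w'\cdot\lambda$, $\nu=w\cdot\lambda$, the corrected condition gives exactly $w'\cdot\lambda\uparrow w\cdot\lambda$ as in part~(a). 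Once these are fixed your argument is the paper's.
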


\begin{proof}
 This is a consequence of \cite[Proposition 6.11]{H1}.
\end{proof}

\subsection{The category $\overline{^\infty\cF^P_\alg}$}

\vskip8pt
Next we consider additionally smooth representations as arguments in the functor $\cF^G_P$.
%Here we shall extend the parameter space in the second entry to the category of smooth
%$L_P$-representations  $\Rep^\infty_K(L_P)$ since it has enough injective and projective objects. 
So let $V$ be a smooth $G$-representation. We supply $V$ with the finest locally convex topology.
This approach is compatible with the one of Schneider and Teitelbaum 
for admissible smooth representations. \cite[Section 2]{ST4}.
Equivalently, if we write $V=\bigcup_n V^{G_n}$ for a system of compact
open subgroups $G_n\sub G$ and supply each $V^{G_n}$ with the finest locally convex topology, then the topology on $V$ 
coincides with
the induced locally convex limit topology. 
It  is Hausdorff \cite[Prop. 5.5 ii)]{S1} and barreled 
\cite[Cor. 6.16, Examples iii)]{S1} (see also the construction in \cite[7.1]{Em1}). Moreover, 
for any $v\in V$ the orbit map $G\to V$ is locally constant and gives rise to an element of $C^{an}(G;V)$.
Hence we may and will consider $V$ with the structure of a locally analytic $G$-representation. 
Then $\cF^G_P$ extends with the same definition as in (\ref{display-defF}) to a bi-functor
$$\cF^G_P: \cO^\frp_\alg \times \Rep^\infty_K(L_P) \lra \Rep^{\rm loc. an.}_K(G).$$ 
where  $\Rep^\infty_K(L_P)$ is the category of smooth $L_P$-representations.

\begin{lemma}
 Let $V$ be of countable dimension. Then $\cF^G_P(M,V)$ is of compact type. 
\end{lemma}

\begin{proof}
 We have the following inclusions of closed subspaces
$$\cF^G_P(M,V)\subset \Ind^G_P(W' \otimes V) \cong C^{an}(H,W'\otimes V).$$
Here $H\subset G$ is a locally analytic section of the projection $G \to G/P$, i.e.   $ H \stackrel{\sim}{\longrightarrow} G/P$ and $W$ 
is as usual a finite-dimensional algebraic
$P$-module which generates $M.$
Since a closed subspace of a space of compact type is again of compact type. it suffices to show that this property holds true
for $C^{an}(G,W'\otimes V).$ Now we  may write $V=\varinjlim_n V_n$ as a locally convex limit with finite-dimensional
vector spaces. Hence $C^{an}(H,W'\otimes V)=\varinjlim_n C^{an}(H,W'\otimes V_n).$ But each subspace 
$C^{an}(H,W'\otimes V_n)$ is of compact type and the inductive limit of compact type spaces with injective transition maps
is of compact type again.
\end{proof}

\begin{rmk}
We stress that apart possible from Proposition \ref{U_irr_subquotient} and Corollary \ref{U_simple} (since the proofs do not apply) all results  of the previous sections are also valid for objects lying in the image of this 
enhanced functor. 
\end{rmk}

We denote by $\Rep_K^{\rm \infty,\infty}(G)$ the full subcategory of $\Rep_K^{\rm \infty}(G)$ whose objects are of countable
dimension. This is clearly an abelian subcategory of  $\Rep_K^{\rm \infty}(G)$ which is closed under (smooth) duals.

\begin{lemma}
 The category $\Rep_K^{\rm \infty,\infty}(G)$ has enough injective and projective objects.
 \end{lemma}

\begin{proof}
  Let $V$ be an object of $\Rep_K^{\rm \infty,\infty}(G)$. Since the smooth dual of an injective object
  is projective and vice versa, it suffices to check that $V$ has an embedding into an injective object of
  countable dimension.  For this we consider the injective object $\Ind^{\infty,G}_{\{e\}}(V_{|\{e\}})$ in the larger category
   $\Rep_K^{\rm \infty}(G)$   together with the embedding $V \hookrightarrow \Ind^{\infty,G}_{\{e\}}(V_{|\{e\}}), v \mapsto [g \mapsto gv]$.
  As $G$ is a second countable group and $V$ has a countable basis, we deduce that  
  $\Ind^{\infty,G}_{\{e\}}(V_{|\{e\}})$ is of countable dimension, hence the claim.
\end{proof}

We define $^\infty\cF^P_\alg$ to  be the full subcategory of $\Rep_K^{\rm loc. an.}(G)$ 
consisting of locally analytic representations
which lie in the essential image of the functor
$$\cF^G_P: \cO^\frp_\alg \times \Rep^{\infty,\infty}_K(L_P) \lra \Rep^{\rm loc. an.}_K(G).$$  
The category $^\infty\cF^P_\alg$ is not abelian as we saw for instance in 
Example \ref{exam}.  Concerning the latter aspect,  we consider the smallest abelian subcategory\footnote{which can be seen as
a kind of  Satake compactification of 
$^\infty\cF^P_\alg$} $\overline{^\infty\cF^P_\alg}$
containing all categories $^\infty\cF^Q_\alg$ where $Q\supset P$ is a parabolic subgroup.

   \begin{lemma}\label{intersection}
   Let $M_1,M_2,M \in \cO^\frp_\alg$ and $V_1,V_2,V \in \Rep^\infty_K(L_P)$ such that $M_1,M_2$  are quotients of $M$ and 
   $V_1,V_2$ are subrepresentations of $V$. Then
   $$\cF^G_P(M_1,V_1) \cap \cF^G_P(M_2,V_2)=\cF^G_P(M_1 \oplus_M M_2,V_1\cap V_2)$$
  \end{lemma}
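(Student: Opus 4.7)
The plan is to reduce the statement to an elementary computation inside the ambient representation $\cF^G_P(M,V) = \Ind^G_P(W' \otimes V)^{\frd}$, where $W \subset M$ is a finite-dimensional locally analytic $P$-stable subspace generating $M$ as a $U(\frg)$-module and $\frd \subset U(\frg) \otimes_{U(\frp)} W$ is the kernel of the canonical surjection. Since $M_1$ and $M_2$ are quotients of $M$, the images of $W$ under $M \twoheadrightarrow M_i$ are finite-dimensional $P$-stable generators, and their kernels $\frd_i \subset U(\frg) \otimes_{U(\frp)} W$ satisfy $\frd \subset \frd_i$. A direct check (already used implicitly in the paper) shows that since $M \to M_1$ and $M \to M_2$ are surjections, the pushout satisfies
\begin{equation*}
M_1 \oplus_M M_2 \;=\; M/\bigl(\ker(M\to M_1) + \ker(M\to M_2)\bigr),
\end{equation*}
so it is generated (via the image of $W$) by a $P$-stable subspace with kernel precisely $\frd_1 + \frd_2$.

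Next I would use the exactness of $\cF^G_P$ (Proposition \ref{exact_in_both}) in both arguments, together with $V_1 \cap V_2 \hookrightarrow V_i \hookrightarrow V$, to get a commutative diagram of inclusions
\begin{equation*}
\cF^G_P(M_i,V_i) \;\hookrightarrow\; \cF^G_P(M,V) \;=\; \Ind^G_P(W'\otimes V)^{\frd},
\end{equation*}
so that the intersection $\cF^G_P(M_1,V_1) \cap \cF^G_P(M_2,V_2)$ makes sense inside $\Ind^G_P(W' \otimes V)$. By the defining formula \eqref{defF}, a locally analytic function $f : G \to W' \otimes V$ in $\Ind^G_P(W' \otimes V)$ belongs to $\cF^G_P(M_i,V_i)$ iff (a) it is pointwise valued in $W' \otimes V_i$ and (b) it is annihilated by $\frd_i$ under the pairing $\langle\,\cdot,\cdot\rangle_{C^{an}(G,V)}$.

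The main (and essentially only) point to check is the compatibility of the two conditions under intersection. Since $W'$ is finite-dimensional, $W' \otimes V_1$ and $W' \otimes V_2$ intersect inside $W' \otimes V$ in $W' \otimes (V_1 \cap V_2)$; hence condition (a) for both simultaneously is equivalent to $f$ being valued in $W' \otimes (V_1 \cap V_2)$. Condition (b) for both is trivially equivalent to being annihilated by $\frd_1 + \frd_2$. Combining, we obtain
\begin{equation*}
\cF^G_P(M_1,V_1) \cap \cF^G_P(M_2,V_2) \;=\; \Ind^G_P\bigl(W'\otimes(V_1\cap V_2)\bigr)^{\frd_1+\frd_2} \;=\; \cF^G_P(M_1 \oplus_M M_2,\, V_1 \cap V_2),
\end{equation*}
which is the claim. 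The only mild subtlety is the pointwise characterization of ``valued in $W' \otimes V_i$'', which uses that $V_i \hookrightarrow V$ is a topological embedding when each $V^H$ carries its (unique finest) locally convex topology, as recalled in the paragraph preceding the extension of $\cF^G_P$ to $\Rep^\infty_K(L_P)$.
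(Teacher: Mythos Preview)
Your proof is correct and is in spirit the same as the paper's, only much more explicit. The paper's own argument is a two-line assertion: first $\cF^G_P(M_1,V_1)\cap\cF^G_P(M_2,V_2)=\cF^G_P(M_1,V_1\cap V_2)\cap\cF^G_P(M_2,V_1\cap V_2)$, then the latter equals $\cF^G_P(M_1\oplus_M M_2,V_1\cap V_2)$; no details are given for either step. Your approach unpacks both steps at once by working inside the ambient $\Ind^G_P(W'\otimes V)$ and characterising membership in $\cF^G_P(M_i,V_i)$ by the pair of conditions ``valued in $W'\otimes V_i$'' and ``annihilated by $\frd_i$''. The identification of $\frd_1+\frd_2$ with the kernel for the pushout $M_1\oplus_M M_2$ and of $(W'\otimes V_1)\cap(W'\otimes V_2)$ with $W'\otimes(V_1\cap V_2)$ is exactly what makes the paper's two asserted equalities true, so you have effectively supplied the missing justification.
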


\begin{proof}
 We have $$\cF^G_P(M_1,V_1) \cap \cF^G_P(M_2,V_2)=\cF^G_P(M_1,V_1\cap V_2) \cap \cF^G_P(M_2,V_1 \cap V_2)$$
 $$=\cF^G_P(M_1 \oplus_M M_2,V_1\cap V_2)$$
\end{proof}

\begin{lemma}\label{suboject_simple}
 Let $M\in \cO^\frp_\alg$ be a simple object such that $\frp$ is maximal for $M$  and let $V$ be a smooth $L_P$-representation. Then any subquotient
 of $\cF^G_P(M,V)$ has the shape $\cF^G_P(M,W)$ for some smooth subquotient $W$ of $V.$
\end{lemma}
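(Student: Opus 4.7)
The strategy is to combine the exactness of $\cF^G_P$ in its smooth argument with a full-faithfulness statement coming from the Hom-computation of Proposition \ref{remark_conj_2}, after reducing to the situation where $P$ is maximal for $M$ via the PQ-formula.

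First, using Proposition \ref{exact_in_both}(b), let $Q\supset P$ be the unique standard parabolic which is maximal for $M$, so that $M\in \cO^{\frq}_\alg$. The PQ-formula yields $\cF^G_P(M,V) = \cF^G_Q(M, i^Q_P V)$, which reduces the problem to treating the case in which the parabolic is already maximal for $M$. In this case I would prove that $\cF^G_P(M,-):\Rep^\infty_K(L_P)\to \Rep^{\rm loc.\,an.}_K(G)$ is exact and fully faithful. Exactness is Proposition \ref{exact_in_both}(a). For full faithfulness, apply the Hom-formula of Proposition \ref{remark_conj_2} with $M_1=M_2=M$: since $M$ is simple, one may take the indecomposable $L_P$-generator $Z := H^0(\fru_P,M)$, and since $P$ is maximal for $M$ the associated parabolic $P_Z$ coincides with $P$, hence $S_Z(V_2)=V_2$ and $i^{P_Z}_P(V_1)=V_1$. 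The formula then collapses to
\[
\Hom_G(\cF^G_P(M,V_2),\cF^G_P(M,V_1)) \;=\; \Hom_{\cO^P}(M,M)\otimes_K \Hom_{L_P}(V_2,V_1) \;=\; \Hom_{L_P}(V_2,V_1),
\]
which is precisely full faithfulness.

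Second, an exact fully faithful functor between abelian categories embeds the lattice of subrepresentations of $V$ into that of $\cF^G_P(M,V)$ (and similarly for quotients, hence for subquotients). To show the embedding is essentially surjective onto all subquotients, I would argue by induction on the length of a finite filtration of $V$: the base case of $V$ irreducible is handled by Theorem \ref{irredgeneral}, which gives that $\cF^G_P(M,V)$ is topologically irreducible, so its only subquotients are $0$ and itself. The inductive step applies the snake lemma to the short exact sequence $0\to \cF^G_P(M,V_1)\to \cF^G_P(M,V)\to \cF^G_P(M,V_2)\to 0$ coming from a length-reducing filtration $0\to V_1\to V\to V_2\to 0$ of $V$, and uses full faithfulness to lift both the subrepresentation and the quotient back to $V_1$ and $V_2$.

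The main obstacle lies in passing from the finite-length case to arbitrary smooth $V$: one needs either a transfinite induction on a well-ordered filtration of $V$, or a direct lattice-theoretic argument showing that the essential image of $\cF^G_P(M,-)$ is closed under arbitrary subrepresentations, invoking the topological structure on locally analytic representations. A secondary subtlety is the reduction step itself: translating a smooth subquotient $\widetilde W$ of $i^Q_P V$ back to a subquotient $W$ of $V$ requires that $\widetilde W$ be of the form $i^Q_P W$, which is not automatic, so the lemma is most naturally interpreted (and applied) in the case where $P$ is already maximal for $M$.
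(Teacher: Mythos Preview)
Your approach is genuinely different from the paper's, and the gaps you flag at the end are real obstacles, not merely technicalities. The paper does not go through full faithfulness, the PQ-reduction, or any induction on the length of $V$. Instead it argues directly and constructively for subobjects (quotients then follow by exactness). Given $U\subset\cF^G_P(M,V)$, the paper invokes the ``smoothing'' construction from \cite[Thm.~5.8]{OS2}:
\[
U_{sm}\;:=\;\varinjlim_{H}\Hom_H\big(\cF^G_P(M)\big|_H,\,U\big|_H\big),
\]
the colimit taken over compact open subgroups $H\subset G$. One has $U_{sm}\subset\cF^G_P(M,V)_{sm}\cong \ind^G_P(V)$, and the surjection $\cF^G_P(M)\otimes\ind^G_P(V)\twoheadrightarrow\cF^G_P(M,V)$ restricts to a surjection $\cF^G_P(M)\otimes U_{sm}\twoheadrightarrow U$. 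Setting $W:=\{f(1)\mid f\in U_{sm}\}\subset V$, this map factors through $\cF^G_P(M,W)$, and one concludes $U=\cF^G_P(M,W)$.

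This sidesteps both of your obstacles at once: the construction produces $W$ directly as a subrepresentation of $V$ without passing through $i^Q_P V$, and it makes no finiteness hypothesis on $V$ whatsoever. Your route via Proposition~\ref{remark_conj_2} and an induction on length is conceptually appealing and does establish the finite-length case when $P$ is maximal, but it does not close to a full proof: the transfinite extension you allude to would require knowing that the essential image of $\cF^G_P(M,-)$ is closed under arbitrary (not just finite) limits of subobjects, which is essentially the statement you are trying to prove. The smoothing construction is the missing idea.
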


\begin{proof}
By the bi-exactness of $\cF^G_P$ it suffices to prove this statement for subobjects.
 Let $U\sub \cF^G_P(M,V)$ be a subobject.
 We recall a construction of \cite[Thm. 5.8]{OS2} which uses the simplicity of $M$. Set
 $U_{sm}=\varinjlim_H \Hom_H(\cF^G_P(M)|_H,U|_H)$ where the limit is over all compact open subgroups $H$  of $G.$
  It is proved that $U_{sm}$ is a subrepresentation of $\cF^G_P(M,V)_{sm}$ and that the latter object identifies with
  the smooth induction $i^G_P(V)=\ind^G_P(V)$ (for $V$ irreducible, but this holds also true in this general
 setting). Moreover, the natural map $\cF^G_P(M) \otimes \ind^G_P(V) \to \cF^G_P(M,V)$ is surjective giving rise by the very definition
 of this map to a  surjection $\phi:\cF^G_P(M) \otimes U_{sm} \to U$. Considering $U_{sm}$ as a subrepresentation of $i^G_P(V)$ as above we set  $W:=\{f(1)\mid f\in U_{sm}\}$. This
 is a smooth $L_P$-representation and  the map $\phi$ factorizes over $\cF^G_P(M,W).$ It follows that the image of
 the map $\phi$ coincides with $\cF^G_P(M,W).$ Hence $U=\cF^G_P(M,W).$
\end{proof}

\smallskip
The next statement is clear by the Jordan-H\"older principle for representation of the shape
$\cF^G_P(M,V)$ where $V$ is admissible smooth.

   \begin{prop}\label{extension}
    Every object $U$ in $\overline{^\infty\cF^P_\alg}$ is a successive extension of objects of the shape $\cF^G_Q(N,W)$ with $P\sub Q.$
   \end{prop}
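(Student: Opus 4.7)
Let $\cC\sub\Rep_K^{\rm loc.an.}(G)$ denote the class of representations admitting a finite filtration whose successive quotients have the form $\cF^G_{Q_i}(N_i,W_i)$ with $P\sub Q_i$, $N_i\in\cO^{Q_i}_\alg$, and $W_i\in\Rep^\infty_K(L_{Q_i})$. My plan is to show that $\cC=\overline{{}^\infty\cF^P_\alg}$. The inclusion $\cC\sub\overline{{}^\infty\cF^P_\alg}$ is immediate: each block $\cF^G_{Q_i}(N_i,W_i)$ belongs to ${}^\infty\cF^{Q_i}_\alg\sub\overline{{}^\infty\cF^P_\alg}$, and the latter category is closed under extensions. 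For the reverse inclusion, since $\overline{{}^\infty\cF^P_\alg}$ is by construction the smallest abelian subcategory containing every ${}^\infty\cF^Q_\alg$ with $Q\supset P$, it is enough to verify that $\cC$ is itself an abelian subcategory.

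Closure of $\cC$ under extensions is built into its definition, so the substantive point is closure under subobjects and quotients. I would begin with a single block $\cF^G_Q(\uM,W)$ with $\uM\in\cO^Q_\alg$. Since $\cO^Q_\alg$ is artinian and noetherian, pick a Jordan--H\"older series $0=M_0\sub M_1\sub\cdots\sub M_m=\uM$ with simple subquotients $L_j\in\cO^Q_\alg$; the contravariant exactness of $\cF^G_Q$ in the first argument together with its exactness in the second then produces a filtration $\{F_j\}$ of $\cF^G_Q(\uM,W)$ whose associated graded pieces are the objects $\cF^G_Q(L_j,W)$. For an arbitrary subobject $U'\sub\cF^G_Q(\uM,W)$, intersecting with $\{F_j\}$ yields a filtration of $U'$ whose successive quotients embed into the various $\cF^G_Q(L_j,W)$; the preceding lemma (in the simple case) identifies each such subquotient with $\cF^G_Q(L_j,W'_j)$ for some smooth subrepresentation $W'_j\sub W$, so $U'\in\cC$. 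The symmetric argument handles quotients of $\cF^G_Q(\uM,W)$.

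Passing from a single block to arbitrary objects of $\cC$ is a straightforward induction on the filtration length. If $V\sub U$ and $U\in\cC$ carries a filtration $0=U_0\sub\cdots\sub U_n=U$ of length $n$, then $V\cap U_{n-1}$ is a subobject of the length-$(n-1)$ object $U_{n-1}$ and so lies in $\cC$ by induction, while $V/(V\cap U_{n-1})$ embeds into the single block $U/U_{n-1}$ and hence lies in $\cC$ by the previous paragraph; thus $V\in\cC$ as an extension. Quotients are treated the same way, and kernels and cokernels of arbitrary morphisms in $\cC$ then lie in $\cC$ as subobjects and quotients of objects in $\cC$. Together with closure under extensions this proves $\cC$ is abelian, giving $\overline{{}^\infty\cF^P_\alg}\sub\cC$ and completing the proof.

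The main point to be careful about is the single-block step: one needs the induced graded pieces of $U'$ to lie in some ${}^\infty\cF^Q_\alg$, and this uses the preceding lemma precisely in its ``subquotient'' formulation, applied to the inclusions of the pieces of $\{U'\cap F_j\}$ into the corresponding $\cF^G_Q(L_j,W)$. Once that is in place, the remaining induction and the formal passage from subobjects and quotients to kernels and cokernels are routine.
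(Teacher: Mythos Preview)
Your proof is correct and follows essentially the same approach as the paper: both reduce to showing that subquotients of a single $\cF^G_Q(\uM,W)$ lie in the desired class, both argue by induction on the length of $\uM$ (equivalently, on a Jordan--H\"older filtration), and both invoke the preceding lemma on subquotients of $\cF^G_Q(L,W)$ with $L$ simple. Your framing via the auxiliary class $\cC$ and the verification that it is abelian makes the logical reduction somewhat more explicit than the paper's terser ``we may suppose $U$ is a subquotient of $\cF^G_P(M,V)$'', but the substance is the same.
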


\begin{proof}
As the direct sum of two objects of the kind $\cF^G_{Q_i}(M_i,V_i)$, $i=1,2,$ is contained in such an object we may 
suppose that $U$ is  some subquotient of $\cF^G_P(M,V)$.
Indeed we have $\cF^G_{Q_i}(M_i,V_i)\sub \cF^G_P(M,(V_i)_{\mid P})$ so that it suffices to treat the case $Q_1=Q_2=P$. But then we see that
$\cF^G_{P}(M_1,V_1) \oplus \cF^G_P(M_2,V_2)\sub \cF^G_P(M_1 \oplus M_2,V_1\oplus V_2).$

The proof is by induction on the length on $M$. If $M$ is simple (where we may assume that  $P$ is maximal for $M$ 
by the PQ-formula) then the statement follows from the above lemma.
Otherwise, let $M_1\sub M$ be some proper submodule and consider the  exact sequence
$$0 \to \cF^G_P(M/M_1,V) \to \cF^G_P(M,V) \stackrel{p}{\to} \cF^G_P(M_1,V) \to 0 .$$
So let $U=U_1/U_2$ be some subquotient of $\cF^G_P(M,V).$ We consider the induced exact sequence
$$0 \to \cF^G_P(M/M_1,V) \cap U_1/\cF^G_P(M/M_1,V) \cap U_2 \to U_1/U_2 \to p(U_1)/p(U_2) \to 0 .$$

If $\cF^G_P(M/M_1,V) \cap U_1 /\cF^G_P(M/M_1,V) \cap U_2 \in \{(0),U_1/U_2\}$ we may apply induction hypothesis to prove the claim.
But also in the other case the inductive hypothesis applies.

\begin{comment}
 Let $U$ be some object of $\overline{^\infty\cF^P_\alg}.$ It is contained in some object $\cF^G_Q(M,V)$ or is a quotient
 of it.  If $U$ is simple, then the statement follows from Corollary \ref{U_simple}.
 If $U$ is not simple then let $U_1 \sub U$ be a 
 simple subobject and consider the exact sequence $0 \to U_1 \to U \to U/U_1\to 0.$ By induction on the length of $U$ we get the 
 claim.
 \end{comment}
\end{proof}

\begin{prop}\label{contain}
Let $U\in \overline{^\infty\cF^P_\alg} $ and suppose that there exist $M\in \cO^\frp_\alg$ and $V \in \Rep^{\infty,\infty}_K(L_P)$ such that $U\subset \cF^G_P(M,V).$ Suppose further that $M$ is minimal, i.e. there is no proper quotient $N$ of $M$ such that $U\subset \cF^G_P(N,V).$ Then we have $\cG^G_P(U)=\bigoplus_{L} L \otimes W_L'$ as $P$-representations  where $L$ goes through all simple constituents
of $M$ and $W_L$ is a quotient of $i^{Q_L}_P(V)$  for some parabolic subgroup $Q_L$ depending on $L.$ 
\end{prop}

\begin{proof}
 The proof is by induction on the length of $M.$ If $M$ is simple
 then we use Lemma \ref{suboject_simple}  and  the claim follows from Proposition \ref{H0_uk}. In general let $M_1 \subset M$ be a proper submodule such that $M/M_1$ is simple. We consider as above the induced exact sequence
 $$0 \to \cF^G_P(M/M_1,V) \to \cF^G_P(M,V) \stackrel{p}{\to} \cF^G_P(M_1,V) \to 0 .$$
 If $p(U)=0$ we get a contradiction to the minimality of $M.$ If $U \cap  
 \cF^G_P(M/M_1,V) =0$, then $U\cong p(U)\subset \cF^G_P(M_1,V).$ By the induction hypothesis we see that $M_1$ appears in $\cG^G_P(U)$ as described above. 
 By considering the composition
 $U \hookrightarrow \cF^G_P(M,V) \to \cF^G_P(M_1,V)$ we get by applying of 
 $\cG^G_P$ a splitting  of the inclusion $M_1 \hookrightarrow M$ and thus a surjection $M \to M_1$ giving rise to a contradiction by the minimality of $M.$ 
 
 Hence we obtain a non-trivial exact sequence
 $$0 \to \cF^G_P(M/M_1,V) \cap U \to U \to p(U) \to 0 .$$
Thus the claim follows by induction once we have proved that
 $\cF^G_P(M_1,V)$ satisfies again the minimality condition with respect to $p(U).$ 
 But  if $N_1\subset M_1$ is a proper submodule with $p(U) \subset \cF^G_P(M_1/N_1,V)$ then it would follow that $U\subset \cF^G_P(M/N_1,V)$. This is again a contradiction to the minimality of $M.$ 
 \end{proof}

%\begin{rmk}
%From the proof we get the following byproduct. In the situation above we have
%$\cG^G_P(U)=\bigoplus_{L} L \otimes W_L'$ as $P$-representations  where $L$ %goes through all simple constituents
%of $M$ and $W_L \subset V$ is a subrepresentation of $V$ depending on $L.$ If %$\cF^G_P(M,V)$
%is minimal containing $U$, then $W_L \neq (0)$ for all $L.$
%\end{rmk}

 \begin{prop}\label{injective}
  Let $M\in \cO^\frp_\alg$ be projective (resp. injective) and let $V\in \Rep^{\infty, \infty}_K(L_P) $ be an injective (resp. projective) object.
  Then $\cF^G_P(M,V)$ is  injective (resp. projective)
  in the category $\overline{^\infty\cF^P_\alg}$.
 \end{prop}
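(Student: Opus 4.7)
By the dualities $M\mapsto M^\vee$ on $\cO^P_\alg$ and the smooth duality on $\Rep^\infty_K(L_P)$, each of which interchanges projectives and injectives, combined with the fact that $\cF^G_P$ is contravariant in its first argument and covariant in its second (cf.\ Theorem \ref{fullyfaithful}), the two cases of the proposition are equivalent. I therefore treat only the injective case: assume $M\in\cO^P_\alg$ is projective and $V\in\Rep^\infty_K(L_P)$ is injective, and show that $\cF^G_P(M,V)$ is injective in $\overline{^\infty\cF^P_\alg}$.

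The first step is a devissage. By Proposition \ref{extension}, every object of $\overline{^\infty\cF^P_\alg}$ admits a finite filtration whose graded pieces are atoms of the form $\cF^G_Q(N,W)$ with $Q\supset P$, $N\in\cO^Q_\alg$ and $W\in\Rep^\infty_K(L_Q)$. Using the long exact Ext-sequence in the abelian category $\overline{^\infty\cF^P_\alg}$ and induction on the length of this filtration, the injectivity claim reduces to the vanishing
$$\Ext^1_{\overline{^\infty\cF^P_\alg}}\bigl(\cF^G_Q(N,W),\,\cF^G_P(M,V)\bigr)=0$$
for every such atom $\cF^G_Q(N,W)$.

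Given an extension $0\to\cF^G_P(M,V)\to E\to\cF^G_Q(N,W)\to 0$ in $\overline{^\infty\cF^P_\alg}$, the task is to produce a $G$-equivariant splitting. The middle term $E$ itself carries an atomic filtration by Proposition \ref{extension}, and I will build a section inductively along this filtration. The key computational input is the morphism formula of Proposition \ref{remark_conj_2} together with Remark \ref{PQ_geht_auch}: for any atom $\cF^G_{Q'}(N',W')$ and a fixed generating subspace $Z\subset M$,
$$\Hom_G\bigl(\cF^G_{Q'}(N',W'),\,\cF^G_P(M,V)\bigr)\;\cong\;\Hom_{\cO^P}(M,N')\otimes_K\Hom_P\bigl(S_Z(W'),\,i^{P_Z}_P(V)\bigr).$$
The first tensor factor is exact in $N'$ because $M$ is projective in $\cO^P_\alg$; the second is exact in $W'$ because $V$ is injective in $\Rep^\infty_K(L_P)$ and smooth induction is exact (so that $i^{P_Z}_P(V)$ is still injective in the relevant sense, or at least $\Hom_P(-,i^{P_Z}_P(V))$ is exact). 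Running these exactness properties through the atomic filtration of $E$ produces at each stage a partial lift of the identity of $\cF^G_Q(N,W)$, and these lifts can be glued to yield the desired section.

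The main obstacle is the coherent pasting in this last step: the two lifting properties (one Lie-algebraic, one smooth) act on different tensor factors of the Hom formula and must be coordinated along the atomic filtration of $E$. This compatibility is guaranteed by the tensor-product shape of the formula above, which ultimately reflects the commutativity of the $\frg$- and $L_P$-actions on the generating subspace $Z\otimes V^\vee$ inside $\cF^G_P(M,V)'\cong D(G)\otimes_{D(\frg,P)}(M\otimes_K V^\vee)$; once this is unwound, the induction goes through and the splitting of $E$ is produced.
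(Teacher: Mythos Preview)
Your reduction to showing $\Ext^1_{\overline{^\infty\cF^P_\alg}}(\cF^G_Q(N,W),\cF^G_P(M,V))=0$ for every atom is fine, but the argument you give for this vanishing is circular. The Hom formula of Proposition~\ref{remark_conj_2} (and Remark~\ref{PQ_geht_auch}) computes $\Hom_G$ from an atom into $\cF^G_P(M,V)$; it says nothing about $\Hom_G(E,\cF^G_P(M,V))$ when $E$ is merely an extension of two atoms. To split $0\to\cF^G_P(M,V)\to E\to\cF^G_Q(N,W)\to 0$ you need the map $\Hom_G(E,\cF^G_P(M,V))\to\Hom_G(\cF^G_P(M,V),\cF^G_P(M,V))$ to be surjective, and this is precisely the injectivity statement you are trying to prove. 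The ``exactness in each tensor factor'' you invoke is exactness with respect to short exact sequences in $\cO^P$ and in $\Rep^\infty_K(L_P)$; there is no mechanism in your argument that forces the extension $E$ (an object of $\overline{^\infty\cF^P_\alg}$, not of $\cO^P\times\Rep^\infty_K(L_P)$) to arise from such data. Invoking an atomic filtration of $E$ does not help: for this two-term extension the filtration is $\cF^G_P(M,V)\subset E$ itself, so there is nothing to induct on.

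The paper closes this gap by a structural reduction that your sketch is missing. It shows directly that the lifting problem $Z_1\hookrightarrow Z_2$, $Z_1\to\cF^G_P(M,V)$ can be reduced to the case where both $Z_1$ and $Z_2$ are atoms (using Lemma~\ref{intersection} and the functor $\cG^G_Q$ to replace $Z_1$ by the smallest atom containing it, and enlarging $Z_2$), and then proves that any embedding of atoms $\cF^G_Q(M_1,V_1)\hookrightarrow\cF^G_P(M_2,V_2)$ is necessarily induced by a pair consisting of a surjection $M_2\twoheadrightarrow M_1$ and an injection $(V_1)_{|P}\hookrightarrow V_2$. This last point is the real content: once you know that morphisms between atoms in $\overline{^\infty\cF^P_\alg}$ come from morphisms in $\cO^P$ and $\Rep^\infty_K(L_P)$, the projectivity of $M$ and injectivity of $V$ produce the extension immediately. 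Your proposal never establishes (or even states) this structural fact, and without it the projectivity/injectivity hypotheses have nothing to act on.
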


 \begin{proof} 
 We consider here the case of injective objects. The case of projective objects is treated in a dual way.  We consider 
 thus a monomorphism $Z_1 \hookrightarrow Z_2$ in our category $\overline{^\infty\cF^P_\alg}$ together
 with a morphism $Z_1 \to \cF^G_P(M,V)$. Since any object in $\overline{^\infty\cF^P_\alg}$ is a subquotient
 of an object lying in the image of a functor $\cF^G_Q$ with $P\subset Q$ we may suppose by enlarging $Z_2$ that
  is has for simplicity the shape $\cF^G_Q(N,W)$. Indeed if $Z_2$ is a submodule of $\cF^G_Q(N,W)$ this is clear.
  If on the other hand, $Z_2$ is a quotient of $\cF^G_Q(N,W)$ then we consider the preimage 
  $\tilde{Z_1}\hookrightarrow \tilde{Z_2}$ of
  $Z_1 \hookrightarrow Z_2$ in $\cF^G_Q(N,W)$. We get an induced map $f:\tilde{Z_1} \to \cF^G_P(M,V)$ and if this extends to $\tilde{Z_2}$
  then also to $Z_2$ since $\ker (\tilde{Z_1} \to Z_1)= \ker( \tilde{Z_2} \to Z_2)$ is mapped to zero under $f.$ 
    By the PQ-formula we see that
  $\cF^G_Q(N,W) \hookrightarrow \cF^G_Q(N,i^Q_P(W|L_P))=\cF^G_P(N,W|L_P).$ Hence we may even suppose that $P=Q.$
  Thus we arrived at the situation where we assume that $Z_2=\cF^G_P(N_2,W_2)$ for $N_2 \in \cO^\frp_\alg$ and $W_2 \in \Rep_K^{\infty,\infty}(G).$
  
  On the other hand, we may also suppose that  $Z_1$ has also the shape $\cF^G_Q(N,W).$ Indeed, by dividing out the kernel of the morphism  $Z_1 \to \cF^G_P(M,V)$  (from the very beginning) we may assume that it is injective as well.  By 
using Lemma \ref{intersection} we see that there are $N \in \cO^\frq_\alg$ and $W \in \Rep^{\infty,\infty}_K(L_Q)$
such that $\cF^G_Q(N,W) \subset \cF^G_Q(N_2,W_2)$ is a minimal object containing $Z_1.$ By Proposition \ref{contain} we deduce that $N$ and $W$ appear in $\cG^G_Q(Z_1)$. Hence the morphism $Z_1 \to \cF^G_P(M,V)$
extends automatically to a morphism $ \cF^G_Q(N,W) \to \cF^G_P(M,V)$.
 
 Hence we may think that our embedding  $Z_1 \hookrightarrow Z_2$ is of the shape 
 $\cF^G_Q(N_1,W_1) \hookrightarrow \cF^G_P(N_2,W_2)$. It follows by Proposition \ref{remark_conj_2}, the bi-exactness
 of $\cF^G_P$ and the exactness of the induction functor for smooth representations that it is
 induced by a surjection $N_2 \to N_1$ and a monomorphism $(W_1)_{\mid P} \hookrightarrow W_2.$
For this note that $\cF^G_Q(N_1,W_1) = \cF^G_Q(N_1) \hat{\otimes} W_1$ and  $\cF^G_P(N_2,W_2) = \cF^G_Q(N_2) \hat{\otimes} W_2$  . 

\begin{comment}
Indeed we consider first the morphism  $\cF^G_P(N_1,(W_1)_{\mid P}) \hookrightarrow \cF^G_P(N_2,W_2)$
and the embedding $\cF^G_QN_1,W_1) \hookrightarrow \cF^G_P(N_1,(W_1)_{\mid P})=\cF^G_Q(N_1,i^Q_P(W_1))$ given by the projection formula $i^Q_P(W_1)=W_1 \otimes i^Q_P(\bf 1)$ and the obvious inclusion
$W_1=W_1 \otimes i^Q_Q ({\bf 1}) \hookrightarrow W_1 \otimes i^Q_P({\bf 1})$. Since
any morphism $i^{P_Z}_P((W_1)_{\mid P}) \to i^{P_Z}_P(W_2)$ which is injective for some parabolic
subgroup $P_Z\supset P$ has to be induced by
an injection $(W_1)_{\mid P} \to W_2$ the claim stated  above follows.
 \end{comment}

 So for proving that $\cF^G_P(M,V)$ is injective let $\cF^G_Q(N_1,W_1) \to \cF^G_P(M,V)$ be any morphism. Again it corresponds to a tuple of
 morphisms $M \twoheadrightarrow N_1$ and $(W_1)_{\mid P}  \hookrightarrow V.$
 Since $V$ is injective 
 %and the induction  functor respects injectives (as the Jacquet functor is left adjoint and exact), 
 we see that there
 is an extension $W_2 \to V.$ Further as $M$ is projective we have a lift
 $M \to N_2.$ The claim follows easily.
 \end{proof}

 \vskip8pt
 \begin{cor}
 The category $\overline{^\infty\cF^P_\alg}$ has enough injective and projective objects.
\end{cor}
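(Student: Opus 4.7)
My plan is to bootstrap the result from Proposition \ref{injective} and Proposition \ref{extension}. Let me focus on the injective case; the projective case is strictly dual, using enough injectives in $\cO^P_\alg$ together with enough projectives in $\Rep^\infty_K(L_P)$, and invoking the projective half of Proposition \ref{injective}.

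First I would observe that it suffices to embed each \emph{building block} $\cF^G_Q(N,W)$ (with $Q \supset P$, $N \in \cO^Q_\alg$, $W \in \Rep^\infty_K(L_Q)$) into an injective of $\overline{^\infty\cF^P_\alg}$. Indeed, given such an embedding, the general case follows by induction on the length of an extension series provided by Proposition \ref{extension}: if $0 \to U_1 \to U \to U_2 \to 0$ is exact with injective hulls $U_i \hookrightarrow I_i$, then by injectivity of $I_1$ the map $U_1 \hookrightarrow I_1$ extends to a morphism $U \to I_1$, which together with the composite $U \to U_2 \hookrightarrow I_2$ yields a monomorphism $U \hookrightarrow I_1 \oplus I_2$, and the direct sum of injectives is injective.

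To embed the building block, I would argue as in the last paragraph of the proof of Proposition \ref{injective}: the PQ-formula together with the natural inclusion $W \hookrightarrow i^Q_P(W|_{L_P})$ gives
\[
\cF^G_Q(N,W) \;\hookrightarrow\; \cF^G_Q\bigl(N, i^Q_P(W|_{L_P})\bigr) \;=\; \cF^G_P(N, W|_{L_P}) \,.
\]
Now $N$ is in particular an object of $\cO^P_\alg$ (via Lemma \ref{faithful}), so using that $\cO^P_\alg$ has enough projectives I pick a surjection $M \twoheadrightarrow N$ from a projective $M \in \cO^P_\alg$; using that $\Rep^\infty_K(L_P)$ has enough injectives I pick an injection $W|_{L_P} \hookrightarrow V$ into an injective smooth $L_P$-representation $V$. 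By the exactness of $\cF^G_P$ in both arguments (Proposition \ref{exact_in_both}) we get a chain of monomorphisms
\[
\cF^G_Q(N,W) \;\hookrightarrow\; \cF^G_P(N, W|_{L_P}) \;\hookrightarrow\; \cF^G_P(M,V) \,,
\]
and Proposition \ref{injective} guarantees that $\cF^G_P(M,V)$ is injective in $\overline{^\infty\cF^P_\alg}$.

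The main obstacle I anticipate is verifying that the two inductive/embedding manipulations genuinely produce injectives in the ambient category $\overline{^\infty\cF^P_\alg}$ (rather than in some smaller subcategory like $\overline{^\infty\cF^Q_\alg}$). This is handled by the reduction to $\cF^G_P(M,V)$ via the PQ-formula, which is precisely what allows us to appeal to Proposition \ref{injective} as stated for the parabolic $P$. Everything else is a standard homological algebra argument, and the dual statement for projectives is obtained formally by reversing the roles of $\cO^P_\alg$ and $\Rep^\infty_K(L_P)$.
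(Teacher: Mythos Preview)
Your proof is correct and follows essentially the same strategy as the paper: embed each building block $\cF^G_Q(N,W)$ into an object $\cF^G_P(M,V)$ with $M$ projective in $\cO^P_\alg$ and $V$ injective in $\Rep^\infty_K(L_P)$, invoke Proposition \ref{injective}, and then handle general objects via Proposition \ref{extension} by the standard inductive embedding into a direct sum of injectives. You are in fact slightly more careful than the paper, which tacitly treats the building blocks as if they were already of the form $\cF^G_P(M,V)$; your use of the PQ-formula to reduce from $Q$ to $P$ makes this step explicit.
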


\begin{proof}
As above we  consider here only the case of injectives. 
Let $U\in\overline{^\infty\cF^P_\alg}$. Suppose first that it has the shape $\cF^G_P(M,V).$
We choose a projective cover $N$
 of $M$ and an embedding $V \hookrightarrow W$ into a smooth injective $L_P$-representation $W$ of countable dimension. Then
 we have a topological embedding $\cF^G_P(M,V) \hookrightarrow \cF^G_P(N,W)$ and by
 the result above the object $\cF^G_P(N,W)$ is injective.
 
 In general we know by Proposition \ref{extension} that it is a successive extension of such objects.
 As such it has an injective envelope, as well (Indeed, suppose that $0 \to A_1 \to U \to A_2\to 0$ is exact and
 that $A_i \to I_i,i=1,2$ are monomorphism into injective objects. Then we get an exact sequence
 $0 \to I_1 \to I_1  \oplus_{A_1} U \to A_2\to 0$ and the middle term is isomorphic to $I_1 \oplus A_2$ by injectivity of 
 $I_1.$ But $I_1 \oplus A_2$ embeds into the injective object $I_1 \oplus I_2$. Therefore $U$ embeds into $I_1 \oplus I_2,$ as well.).
\end{proof}

\begin{comment}
\begin{rmk}
 If the situation described in Remark \ref{remark_conj_2} is true, then one can deduce
 that if $V$ is injective and $M$ is projective, then $\cF^G_P(M,V)$ is injective, as well. 
\end{rmk}
\end{comment}

\subsection{Extensions of generalized locally analytic Steinberg representations}

For a parabolic subgroup $P\sub G$, we abbreviate $I^G_P:=\Ind^G_P({\bf 1})$
and denote by $i^G_P$ the subspace of smooth vectors.
The attached Steinberg representation is given by the quotient 
$V^G_P= \Ind^G_P(\bf 1) / \sum_{Q \supsetneq P} \Ind^G_Q(\bf 1).$ 
We shall determine the Ext-groups of these objects in our compactified categories.

\vskip8pt
We recall a result from \cite{O1}. Here we denote by $^\infty\Ext^\ast$ the corresponding Ext-groups in the category
of smooth representations.

\begin{prop}\label{recall}
Let $I\sub \Delta.$ Then we have
$$^\infty\Ext^\ast_{L_I}({\bf 1}, {\bf 1})=\Lambda^\ast (X^\ast({\bf L}_I)).$$ \qed
\end{prop}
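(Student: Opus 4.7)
The plan is to reduce the computation to a torus via the abelianization. Write $L_I = Z^0 \cdot L_I^{\mathrm{der}}$ where $Z^0$ is the connected center and $L_I^{\mathrm{der}}$ is the semisimple derived group; equivalently, consider the short exact sequence
\begin{equation*}
1 \longrightarrow L_I^{\mathrm{der}} \longrightarrow L_I \longrightarrow T_I \longrightarrow 1,
\end{equation*}
where $T_I := L_I/L_I^{\mathrm{der}}$ is a split torus. Since any character of $L_I$ is trivial on $L_I^{\mathrm{der}}$, we have $X^\ast({\bf L}_I) = X^\ast({\bf T}_I)$, so it suffices to prove $^\infty\Ext^\ast_{L_I}(\mathbf{1},\mathbf{1}) = \Lambda^\ast(X^\ast({\bf T}_I))$.

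The first step is the vanishing statement $^\infty\Ext^i_{L_I^{\mathrm{der}}}(\mathbf{1},\mathbf{1}) = 0$ for $i > 0$. This is the standard cohomological vanishing for semisimple $p$-adic groups and is obtained from the contractibility of the Bruhat--Tits building $\cB$ of $L_I^{\mathrm{der}}$: the augmented oriented chain complex of $\cB$ furnishes a resolution of the trivial smooth representation by direct sums of compactly induced modules $\mathrm{c\text{-}ind}_{K_\sigma}^{L_I^{\mathrm{der}}}\mathbf{1}$ with $K_\sigma$ a compact-open stabilizer, and these are projective in the smooth category. Applying $\Hom_{L_I^{\mathrm{der}}}(-,\mathbf{1})$ and Frobenius reciprocity computes the Ext groups as the cellular cohomology of $\cB$, which vanishes in positive degree.

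Next, the Hochschild--Serre spectral sequence for the above short exact sequence degenerates at $E_2$ by Step~1, yielding an isomorphism $^\infty\Ext^\ast_{L_I}(\mathbf{1},\mathbf{1}) \cong {}^\infty\Ext^\ast_{T_I}(\mathbf{1},\mathbf{1})$. It remains to compute the right-hand side. Writing $T_I^0$ for the maximal compact subgroup (so $T_I/T_I^0 \cong X_\ast({\bf T}_I) \cong \bbZ^r$ upon choosing a uniformizer), the compactness of $T_I^0$ gives $^\infty\Ext^\ast_{T_I^0}(\mathbf{1},\mathbf{1}) = K$ in degree $0$ and zero otherwise, so another application of Hochschild--Serre reduces to the group cohomology of the free abelian group $X_\ast({\bf T}_I)$ with trivial coefficients. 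By a standard Koszul resolution this equals $\Lambda^\ast_K\bigl(\Hom(X_\ast({\bf T}_I),K)\bigr) = \Lambda^\ast_K\bigl(X^\ast({\bf T}_I)\otimes_\bbZ K\bigr) = \Lambda^\ast(X^\ast({\bf L}_I))$.

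The main obstacle is Step~1, i.e.\ the vanishing of smooth cohomology for the semisimple group $L_I^{\mathrm{der}}$; the torus computation and the Hochschild--Serre reduction are then formal. This is exactly the content of the result established in \cite{O1}, so in the end one simply invokes loc.\,cit.
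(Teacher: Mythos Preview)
The paper gives no proof of this proposition: it is introduced with the sentence ``We recall a result from \cite{O1}'' and is simply quoted from that reference. Your proposal is therefore strictly more informative than what the paper does, and since you too conclude by invoking \cite{O1}, it is entirely consistent with the paper's treatment.

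Your sketch is correct. One point that might look like a gap but is not: the sequence $1 \to L_I^{\mathrm{der}} \to L_I \to T_I \to 1$ is a priori only exact as a sequence of algebraic groups, and one needs exactness on $L$-points to run Hochschild--Serre for smooth representations. This holds here because the maximal split torus $\bT \subset \bL_I$ already surjects onto $\bT_I$ (the map $\bT \to \bT_I$ is a surjection of split tori, hence admits a section), so $L_I(L) \to T_I(L)$ is surjective. With that observation, the building resolution for the semisimple part, the two Hochschild--Serre reductions, and the Koszul computation for $\bbZ^r$ go through exactly as you describe.
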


The next statement is contained in \cite[Thm. 9.8]{H2}.

\begin{lemma}\label{projective}
 For a parabolic subgroup $Q$ of $G$, let $M=M_Q(0)=U(\frg) \otimes_{U(\frq)} K$ be the generalized Verma module
 with respect to the trivial $Q$-module. Then $M$ is projective in $\cO^\frq_\alg.$ \qed
\end{lemma}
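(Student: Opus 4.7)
The plan is to reduce the claim to its analogue in the classical BGG parabolic category $\cO^\frq$, where the cited result of Humphreys applies directly, and then to transfer projectivity to the full subcategory $\cO^\frq_\alg = \cO^Q_\alg$.

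First, I observe that all weights of $M_Q(0) = U(\frg) \otimes_{U(\frq)} K$ lie in $-\sum_{\alpha \in \Phi^+(\frg) \setminus \Phi^+(\frl_Q)} \bbZ_{\geq 0}\,\alpha \subset X^\ast(T)$, so $M_Q(0)$ is integral, i.e.\ $M_Q(0) \in \cO^\frq_\alg$. Under the identification $\cO^Q_\alg = \cO^\frq_\alg$ recalled in \S2 (given by the fully faithful section $\cO^\frq_\alg \hookrightarrow \cO^Q$ of the forgetful functor $\omega$), this is the same generalized Verma module as in the statement.

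Second, I would invoke \cite[Thm.~9.8]{H2}, which asserts projectivity of generalized Verma modules $M_Q(\lambda)$ in $\cO^\frq$ under a maximality condition on $\lambda$ in its dot-Weyl-orbit intersected with $\Lambda^+_I$, where $I$ indexes the simple roots of the Levi $L_Q$. For $\lambda = 0$ this is automatic: for any $w \neq e$ in $W$, one has
\[
w \cdot 0 \;=\; w(\rho) - \rho \;=\; -\!\!\sum_{\alpha > 0,\, w^{-1}\alpha < 0} \alpha,
\]
a strictly negative sum of positive roots, so $0$ is manifestly maximal in $(W \cdot 0) \cap \Lambda^+_I$.

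Third, since $\cO^\frq_\alg \subset \cO^\frq$ is a \emph{full} subcategory and $M_Q(0)$ lies in it, projectivity in $\cO^\frq$ descends automatically to projectivity in $\cO^\frq_\alg$: given an epimorphism $N_1 \twoheadrightarrow N_2$ in $\cO^\frq_\alg$ and a morphism $M_Q(0) \to N_2$, the lift $M_Q(0) \to N_1$ produced in $\cO^\frq$ is by fullness already a morphism in $\cO^\frq_\alg$. I do not foresee a real obstacle here: the argument is essentially a routine combination of Humphreys' theorem with this full-subcategory observation, and the only point requiring mild care is isolating the precise form of the hypothesis in \cite[Thm.~9.8]{H2} and verifying it at $\lambda = 0$.
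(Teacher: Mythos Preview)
Your proposal is correct and follows exactly the route the paper takes: the paper gives no argument at all beyond the citation ``contained in \cite[Thm.~9.8]{H2}'', and your three steps (integrality of the weights, verification of the dominance/maximality hypothesis at $\lambda=0$, and descent of projectivity to the full Serre subcategory $\cO^\frq_\alg$) are precisely the routine unpacking of that reference. The only minor quibble is that the hypothesis in Humphreys' Theorem~9.8 is usually stated as $\langle \lambda+\rho,\alpha^\vee\rangle>0$ for all $\alpha\in\Phi^+\setminus\Phi^+_I$ rather than as maximality in the dot-orbit, but for $\lambda=0$ both formulations are immediate.
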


\begin{prop}\label{Ext_principal} Let $G$ be semi-simple and let $I,J \sub \Delta.$ Then we have
$$Ext^\ast_{^\infty\overline{\cF^B_\alg}}(I^G_{P_I},I^G_{P_J}) =\left\{
  \begin{array}{r@{\quad:\quad}l}\Lambda^\ast ( X^\ast({\bf L}_J))  & \mbox{ if } J\sub I \\ 0 & \mbox{ otherwise } \end{array} \right. .$$
\end{prop}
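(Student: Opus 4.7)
My plan is to compute these Ext groups via an injective resolution of $I^G_{P_J}$ in $^\infty\overline{\cF^B_\alg}$, then apply locally analytic Frobenius reciprocity and the naive Jacquet functor to reduce to smooth Ext groups treated by Proposition \ref{recall}. Write $I^G_{P_J} = \cF^G_{P_J}(M_{P_J}(0),\mathbf{1})$, where $M_{P_J}(0) = U(\frg)\otimes_{U(\frp_J)}K$ is projective in $\cO^{P_J}_\alg$ by Lemma \ref{projective}. Choose an injective resolution $\mathbf{1}\to W^\bullet$ in $\Rep^\infty_K(L_{P_J})$; by the bi-exactness of $\cF^G_{P_J}$ together with Proposition \ref{injective}, $\Ind^G_{P_J}(W^\bullet) = \cF^G_{P_J}(M_{P_J}(0),W^\bullet)$ is an injective resolution of $I^G_{P_J}$ in $^\infty\overline{\cF^B_\alg}$. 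Applying $\Hom_G(I^G_{P_I},-)$ term by term, the locally analytic Frobenius reciprocity used in Step 1 of the proof of Theorem \ref{fullyfaithful} combined with the factorization through the naive Jacquet quotient (valid since each $W^n$ is smooth) gives
\[
\Hom_G\bigl(\Ind^G_{P_I}(\mathbf{1}),\,\Ind^G_{P_J}(W^n)\bigr) \;=\; \Hom_{L_{P_J}}\bigl(\overline{H}_0(U_{P_J},\Ind^G_{P_I}(\mathbf{1})),\,W^n\bigr).
\]

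The central step is then the computation of $\overline{H}_0(U_{P_J},I^G_{P_I})$. If $J \subset I$, then $\fru_{P_J}^- \subset \fru_{P_I}^-$ and PBW shows that $M_{P_I}(0)$ is free over $U(\fru_{P_J}^-)$, so the Bruhat-cell argument of Proposition \ref{h0_verma} applies verbatim and yields $\overline{H}_0(U_{P_J},I^G_{P_I}) = H^0(\fru_{P_J},M_{P_I}(0))' = K$ with trivial $L_{P_J}$-action, the latter because $L_{P_J}\subset L_{P_I}$ acts trivially on the highest weight vector. The complex thus reduces to $(W^\bullet)^{L_{P_J}}$, whose $n$-th cohomology is ${}^\infty\Ext^n_{L_{P_J}}(\mathbf{1},\mathbf{1}) = \Lambda^n(X^\ast(\mathbf{L}_J))$ by Proposition \ref{recall}. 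If $J\not\subset I$, an analogous Bruhat analysis shows that the identity cell contributes an $L_{P_J}$-module that is no longer trivial (since $L_{P_J}\not\subset L_{P_I}$ moves the highest weight vector into lower weight spaces of $M_{P_I}(0)$), and the remaining non-trivial Weyl cells produce a locally analytic $L_{P_J}$-representation of principal-series type; neither piece admits a nonzero morphism to a smooth $L_{P_J}$-representation, so all Hom terms vanish and $\Ext^\ast = 0$.

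The hard part will be the case $J\not\subset I$: Proposition \ref{h0_verma} does not apply directly since $M_{P_I}(0)$ is no longer free over $U(\fru_{P_J}^-)$, so a careful Bruhat-cell decomposition analogous to Proposition \ref{h0_verma_alle} is needed to pin down $\overline{H}_0(U_{P_J},I^G_{P_I})$, after which one must verify that the resulting locally analytic $L_{P_J}$-representation contains the trivial representation neither as a smooth subrepresentation nor as a smooth quotient, so that $\Hom_{L_{P_J}}(\overline{H}_0(U_{P_J},I^G_{P_I}),W^n) = 0$ for every term of the smooth injective resolution.
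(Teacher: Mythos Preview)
Your overall strategy (build an injective resolution of $I^G_{P_J}$ as $\cF^G_{P_J}(M_{P_J}(0),W^\bullet)$ and reduce via Frobenius to a statement about $\overline{H}_0(U_{P_J},I^G_{P_I})$) is exactly the paper's approach in the case $J\subset I$. However, there are two concrete errors.

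\medskip
\textbf{Case $J\subset I$.} The inclusion goes the other way: $J\subset I$ gives $P_J\subset P_I$, hence $\fru_{P_J}\supset\fru_{P_I}$ and $\fru_{P_J}^-\supset\fru_{P_I}^-$. Consequently $M_{P_I}(0)\cong U(\fru_{P_I}^-)$ is \emph{not} free over $U(\fru_{P_J}^-)$; indeed any $f_\alpha$ with $\alpha\in I\setminus J$ lies in $\fru_{P_J}^-\cap\frl_{P_I}$ and kills the generator $1\otimes 1$. Your conclusion $H^0(\fru_{P_J},M_{P_I}(0))=K$ is therefore false in general: already for $G=\mathrm{SL}_3$, $I=\{\alpha_1\}$, $J=\emptyset$, the vector $f_{\alpha_2}\otimes 1$ is a second $\fru_B$-invariant. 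What one actually gets (and what the paper uses) is that $\overline{H}_0(U_{P_J},I^G_{P_I})=H^0(\fru_{P_J},M_{P_I}(0))'$ is a finite direct sum of \emph{algebraic} $L_{P_J}$-representations, among which the trivial one occurs exactly once. Since the target $W^n$ is smooth, only the trivial summand contributes, and one recovers the complex computing ${}^\infty\Ext^\ast_{L_{P_J}}(\mathbf{1},\mathbf{1})$. So your argument is repairable here, but the freeness claim and the assertion that the Jacquet module is one-dimensional are both wrong.

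\medskip
\textbf{Case $J\not\subset I$.} Here your plan diverges from the paper and has a genuine gap. You assert that after a Bruhat analysis the resulting $L_{P_J}$-representation ``admits no nonzero morphism to a smooth $L_{P_J}$-representation''; but you give no argument, and this is not obvious. (Consider already $J=\Delta$: then $U_{P_J}=\{1\}$, so you would need to show directly that $I^G_{P_I}$ has no nonzero smooth $G$-quotient, which is not a one-line statement.) The paper avoids this difficulty entirely: it enlarges the injective resolution by embedding $\cF^G_{P_J}(M_{P_J}(0),I^\bullet)$ into $\cF^G_B(M_{P_J}(0),J^\bullet)$ via a map of smooth resolutions $I^\bullet\hookrightarrow i^{P_J}_B(J^\bullet)$, and then invokes the Hom formula of Remark~\ref{PQ_geht_auch} to see that any map $I^G_{P_I}\to\cF^G_B(M_{P_J}(0),J^i)$ is governed on the Lie-algebra side by a map $M_{P_J}(0)\to M_{P_I}(0)$. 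The latter is the space of $\frp_J$-invariants in $M_{P_I}(0)$, which is zero precisely when $J\not\subset I$. This bypasses any direct computation of $\overline{H}_0(U_{P_J},I^G_{P_I})$.
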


\begin{proof}
We set $P=P_I$ and $Q=P_J.$ 

 {\it 1. Case}. Suppose that $J \not\sub I.$ 
 Let $I^\bullet$ be an injective resolution  of the trivial $L_Q$-representation in the category $\Rep_K^{\rm \infty,\infty}(L_Q)$.
 Then by Lemma \ref{projective} and Proposition \ref{injective},  $\cF^G_Q(M_Q(0),I^\bullet)$ is an injective resolution of 
$I^G_Q.$
Let $J^\bullet$ be an injective resolution of the trivial $T$-representation in the category $\Rep_K^{\rm \infty,\infty}(T)$. 
Then $i^Q_B(J^\bullet)$ is an injective resolution of $i^Q_B$ (in the category of smooth representations) since the induction functor
is exact and has with the Jacquet functor an exact left adjoint.
Hence the embedding ${\bf 1}_Q \to i^Q_B$
extends to a morphism of complexes $I^\bullet \to i^Q_B(J^\bullet)$. Here we may suppose by standard arguments  that 
the maps in each degree  are injective.
We consider
the induced (injective) maps $\cF^G_Q(M_Q(0),I^\bullet) \to \cF^G_Q(M_Q(0),i^Q_B(J^\bullet))=\cF^G_B(M_Q(0),J^\bullet).$
We shall see that any map $I^G_P \to \cF^G_B(M_Q(0),J^i), i\geq 0,$ vanishes which is enough for our claim. Indeed by Remark \ref{PQ_geht_auch}
it is induced on the Lie algebra part by a map $M_Q(0) \to M_P(0)$. Any such map vanishes if $Q \not\sub P.$

\vskip8pt
{\it 2. Case}. Suppose that $J \sub I.$ 
  Then by applying Frobenius reciprocity any map $I^G_P \to \cF^G_Q(M_Q(0),I^i)=\Ind^G_Q(I^i)$ is given by
 a map $(I^G_P)_{U_Q}=H^0(U_Q, I^G_P)' \to I^i.$ The left hand side coincides by Proposition  \ref{h0_verma} with
 $H^0(\fru_Q,M_P(0))'$ which is a sum of algebraic representations and which
 contains the trivial representation. Since any map between an algebraic representation different
 from the trivial one and a smooth representation vanishes we see that 
 any map $(I^G_P)_{U_Q} \to I^i$ corresponds to a map ${\bf 1} \to I^i$. Hence  the series of maps determines 
 $^\infty\Ext^\ast_{L_J}({\bf 1}, {\bf 1})$ which coincides with $\Lambda^\ast ( X^\ast({\bf L}_J))$ by 
 Proposition \ref{recall}.
\end{proof}

\begin{thm} Let $G$ be semi-simple. {\it Let $I,J \sub \Delta$. Then

$$\Ext_{^\infty\overline{\cF^B}}^i(V^G_{P_I}, V^G_{P_J})=\left\{ \begin{array}{cc}
                                   K & | I \cup J \setminus I\cap J| =i \\
                                  (0) & \mbox{otherwise}
                                \end{array} \right. .
 $$}
 \end{thm}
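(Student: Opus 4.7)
The plan is to reduce the computation to Proposition \ref{Ext_principal} via two hyperext spectral sequences arising from the standard Čech-type resolutions of the two generalized Steinberg representations. For $I \subseteq \Delta$, let $C^I_\bullet \to V^G_{P_I}$ be the locally analytic analogue of the classical resolution, with $C^I_p = \bigoplus_{T \subseteq \Delta \setminus I,\, |T|=p} I^G_{P_{I \cup T}}$ and differentials assembled from the alternating sums of the natural inclusions $I^G_{P_K} \hookrightarrow I^G_{P_{K \setminus \{\alpha\}}}$ (which exist because $P_K \supseteq P_{K\setminus\{\alpha\}}$). Exactness in $^\infty\overline{\cF^B}$ follows from the smooth analogue together with the definition of $V^G_{P_I}$ as a quotient and the exactness properties of the functors $\cF^G_P$.

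\medskip
The first step is to compute $\Ext^n_{^\infty\overline{\cF^B}}(V^G_{P_I}, I^G_{P_K})$ for arbitrary $K$ using $C^I_\bullet$. The spectral sequence $E_1^{p,q} = \Ext^q(C^I_p, I^G_{P_K}) \Rightarrow \Ext^{p+q}(V^G_{P_I}, I^G_{P_K})$ has, by Proposition \ref{Ext_principal}, $E_1^{p,q} = \bigoplus_{S \supseteq K \setminus I,\, |S|=p,\, S \subseteq \Delta \setminus I} \Lambda^q(X^\ast(\bL_K))$. Reparameterising via $S = (K \setminus I) \sqcup S'$ with $S' \subseteq \Delta \setminus (I \cup K)$, the $d_1$-differential is the simplicial coboundary (adding an element) on subsets of $\Delta \setminus (I \cup K)$, tensored with the fixed coefficient $\Lambda^q(X^\ast(\bL_K))$. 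This cochain complex is acyclic whenever $\Delta \setminus (I \cup K) \neq \emptyset$, and otherwise reduces to $K$ concentrated at $p = |K \setminus I|$. Since the surviving $E_2$ lies in a single column, the sequence degenerates to
$$\Ext^n(V^G_{P_I}, I^G_{P_K}) = \begin{cases} \Lambda^{n - |K \setminus I|}(X^\ast(\bL_K)) & \text{if } I \cup K = \Delta, \\ 0 & \text{otherwise.} \end{cases}$$

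\medskip
Feeding this into the spectral sequence $E_1^{p,q} = \Ext^q(V^G_{P_I}, C^J_p) \Rightarrow \Ext^{q-p}(V^G_{P_I}, V^G_{P_J})$ coming from $C^J_\bullet \simeq V^G_{P_J}$, set $A := \Delta \setminus (I \cup J)$, $C := I \setminus J$, and $a := |A|$, $c := |C|$, $d := |J \setminus I|$. Only summands indexed by $T \supseteq A$ survive; writing such $T = A \sqcup T_C$ with $T_C \subseteq C$, a direct computation gives $|(J \cup T) \setminus I| = a + d$, independent of $T_C$. With $p = a + t$ and $q = a + d + s$ the page reads
$$E_1^{a+t,\,a+d+s} = \bigoplus_{T_C \subseteq C,\ |T_C|=t} \Lambda^s\bigl(X^\ast(\bL_{J \cup A \cup T_C})\bigr),$$
and $d_1$ goes in the direction of decreasing $t$.

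\medskip
The last step is the cohomology of this page. Because $G$ is semisimple, $X^\ast(\bL_{J \cup A \cup T_C})$ is the codimension-$|T_C|$ subspace $U_{T_C} := \bigcap_{\alpha \in T_C} \ker(\alpha^\vee)$ of the $c$-dimensional space $U := X^\ast(\bL_{J \cup A})$. Choosing a basis of $U$ dual to the restricted coroots $\{\alpha^\vee\}_{\alpha \in C}$ (so $U_{T_C}$ is spanned by the basis elements indexed by $C \setminus T_C$), the decomposition $\Lambda^s U = \bigoplus_{|I'|=s} K \cdot e_{I'}$ splits the $t$-complex at fixed $s$ into a direct sum over $s$-subsets $I' \subseteq C$ of the simplicial cochain complex on $C \setminus I'$. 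Each summand is acyclic unless $I' = C$, forcing $s = c$, where the surviving cohomology is $K$ concentrated at $t = 0$. Hence $E_2$ consists of a single copy of $K$ at $(p,q) = (a, a+c+d)$; with no other nonzero entries, $E_\infty = E_2$ and the spectral sequence produces $\Ext^{c+d}(V^G_{P_I}, V^G_{P_J}) = K$ with all other Ext-groups vanishing. Since $c + d = |(I \cup J) \setminus (I \cap J)|$, the theorem follows. The main obstacle I anticipate is the Koszul-type cohomology of the last step --- specifically, the decomposition requires the restricted coroots $\{\alpha^\vee|_U\}_{\alpha \in C}$ to form a basis of $U^\ast$, which holds for semisimple $G$ --- together with confirming exactness of $C^J_\bullet$ in the enlarged category.
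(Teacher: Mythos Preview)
Your argument is correct and follows the same route the paper takes: use the acyclic resolution $C^I_\bullet \to V^G_{P_I}$ (the one the paper cites from \cite{OSch}) together with Proposition \ref{Ext_principal} to reduce everything to the combinatorics already carried out in the smooth setting in \cite{O1}. The paper's own proof simply records the resolution and then says ``the rest of the proof is the same as in loc.cit.'', so what you have written is precisely a fleshed-out version of that deferred argument --- first computing $\Ext^{\ast}(V^G_{P_I}, I^G_{P_K})$ via the hyperext spectral sequence for $C^I_\bullet$, and then feeding this into the spectral sequence attached to $C^J_\bullet$, ending with the Koszul-type identification of $X^\ast(\bL_{J\cup A\cup T_C})$ inside $X^\ast(\bL_{J\cup A})$. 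Two small points worth tightening if you turn this into a full write-up: (i) the exactness of $C^I_\bullet$ in $^\infty\overline{\cF^B}$ is exactly what the paper invokes from \cite{OSch}, so you may cite that rather than flag it as an obstacle; (ii) the functoriality of the isomorphism in Proposition \ref{Ext_principal} (that the $d_1$-maps really are the identity, respectively the natural inclusion, on $\Lambda^\ast(X^\ast(\bL_K))$) is used implicitly in both spectral sequences and deserves one sentence of justification via the Frobenius-reciprocity description in the proof of that proposition.
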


\begin{proof}
In \cite{OSch} we proved that the following complex is an acyclic resolution of $V^G_{P_I}$ by locally analytic $G$-representations,
\begin{equation}
0 \rightarrow  I^G_G \rightarrow \bigoplus_{I\sub K \sub \Delta \atop |\Delta\setminus K|=1}I^G_{P_K} \rightarrow \bigoplus_{I\sub K \sub \Delta \atop |\Delta\setminus K|=2}I^G_{P_K}
\rightarrow \dots \rightarrow \bigoplus_{I\sub K \sub \Delta
  \atop |K\setminus I|=1}I^G_{P_K}\rightarrow I^G_{P_I} \rightarrow V^G_{P_I}\rightarrow 0.
\end{equation}
The smooth version of this complex was used in \cite{O1} together with the smooth version of Proposition \ref{Ext_principal}
to get by formal arguments the smooth version of our theorem. Hence
 the rest of the proof is the same as in loc.cit.
\end{proof}

If $G$ is not necessarily semi-simple, then we have as in the smooth case a contribution of the center $Z(G)$.
By using a Hochschild-Serre argument (cf. loc.cit.) we conclude:
\begin{cor}
Let $G$ be reductive with center $Z(G)$ of rank $d.$  Let $I,J \sub \Delta.$ Then we have
$$Ext_{^\infty\overline{\cF^B}}^i(V^G_{P_I},V^G_{P_J})=\left\{ \begin{array}{r@{\quad:\quad}l}
  K^{d \choose j }  & i = |I\cup J| -  |I\cap J| +j,\; j=0,\ldots,d\\ 0 & otherwise \end{array} \right. $$
\end{cor}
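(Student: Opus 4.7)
The plan is to reduce to the semi-simple case already established, via a Hochschild--Serre argument along the central subgroup, exactly as suggested by the parenthetical reference to \cite{O1}.

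First, I would observe that $Z(G)$ is contained in every parabolic $P \supset B$, hence acts trivially on $I^G_{P_I} = \Ind^G_{P_I}(\mathbf{1})$ and therefore on each Steinberg quotient $V^G_{P_I}$. Consequently, both arguments of the Ext descend to representations of $G' := G/Z(G)$, which is semi-simple with root system $\Delta$, and (writing $\bar{P}_I = P_I/Z(G)$) one has $V^G_{P_I} \cong V^{G'}_{\bar{P}_I}$ as locally analytic representations, the inflated class lying in the corresponding category $^\infty\overline{\cF^{\bar B}}$ for $G'$.

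Next, I would set up a Hochschild--Serre spectral sequence for $1 \to Z(G) \to G \to G' \to 1$ in the category $^\infty\overline{\cF^B}$,
\begin{equation*}
E_2^{p,q} \;=\; \Ext^p_{Z(G)}\!\bigl(\mathbf{1},\, \Ext^q_{G'}(V^G_{P_I}, V^G_{P_J})\bigr) \;\Longrightarrow\; \Ext^{p+q}_{^\infty\overline{\cF^B}}(V^G_{P_I}, V^G_{P_J}).
\end{equation*}
Because $Z(G)$ acts trivially on the coefficient module, the $E_2$-page factors as the tensor product $\Ext^p_{Z(G)}(\mathbf{1},\mathbf{1}) \otimes_K \Ext^q_{G'}(V^{G'}_{\bar{P}_I}, V^{G'}_{\bar{P}_J})$. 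For the second factor, the previous theorem applied to $G'$ yields $K$ in the single degree $q_0 = |I \cup J| - |I \cap J|$ and $0$ otherwise. For the first factor, since $Z(G)$ is a split $L$-torus of rank $d$, a direct Künneth argument (as in Proposition \ref{recall}, reducing to rank one via $Z(G) \cong \bG_m^d$) gives $\Ext^p_{Z(G)}(\mathbf{1},\mathbf{1}) \cong \Lambda^p X^\ast(Z(G))_K$, of dimension $\binom{d}{p}$. Because $E_2^{p,q}$ is then concentrated on the single row $q = q_0$, all differentials $d_r$ vanish for $r \geq 2$ and the spectral sequence degenerates at $E_2$; reading off the total degree $i = p + q_0$ yields exactly the claimed formula.

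The main obstacle is to make the Hochschild--Serre spectral sequence rigorous inside $^\infty\overline{\cF^B}$ rather than in a larger category of locally analytic (or smooth) representations, i.e.\ to verify that the requisite inflation and invariants functors have the right derived-functorial behaviour and that sufficiently many injectives of the appropriate form exist (here one uses that the objects $V^G_{P_I}$ and the injective resolutions constructed in Proposition \ref{injective} are built from inflations along $G \to G'$, so restricting to $Z(G)$ yields trivial modules). Once this compatibility is in place, together with the rank-one computation $\Ext^\ast_{L^\times}(\mathbf{1},\mathbf{1}) = \Lambda^\ast X^\ast(\bG_m)$, the remainder of the argument is formal and mirrors the smooth case in \cite{O1}.
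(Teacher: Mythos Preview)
Your approach is essentially the one the paper indicates: the paper's entire proof is the sentence ``By using a Hochschild--Serre argument (cf.\ loc.cit.) we conclude,'' referring to \cite{O1}, and you have supplied precisely such an argument. The substantive content---reducing to the semi-simple quotient, identifying the central contribution with $\Lambda^\ast X^\ast(Z(G))$, and reading off the answer from a spectral sequence that degenerates because one factor is concentrated in a single degree---matches the intended proof.

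One cosmetic point: for the short exact sequence $1 \to Z(G) \to G \to G' \to 1$ you wrote down, the standard Hochschild--Serre (Grothendieck) spectral sequence has the form
\[
E_2^{p,q} \;=\; \Ext^p_{G'}\bigl(V^{G'}_{\bar P_I},\, H^q(Z(G), V^{G'}_{\bar P_J})\bigr) \;\Longrightarrow\; \Ext^{p+q}_{^\infty\overline{\cF^B}}(V^G_{P_I}, V^G_{P_J}),
\]
i.e.\ with $G'$ on the outside and the normal subgroup $Z(G)$ on the inside, the reverse of what you wrote. Since $Z(G)$ acts trivially, $H^q(Z(G),V^{G'}_{\bar P_J}) \cong \Lambda^q X^\ast(Z(G))_K \otimes V^{G'}_{\bar P_J}$ with trivial $G'$-action on the first factor, so the $E_2$-page is in either ordering the same tensor product $\Ext^\ast_{G'}\otimes \Lambda^\ast$ and the conclusion is unaffected. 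Your acknowledgement of the need to justify the spectral sequence inside $^\infty\overline{\cF^B}$ (rather than in all locally analytic representations) is well placed; this is exactly the point that has to be imported from \cite{O1}.
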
\qed

\vspace{0.5cm}

\subsection{Adjunction}

As a last application we want to discuss some  adjunction formulas. For this we need some preparations.

\begin{lemma}\label{Lemma_cdot}
 Let $x,w\in W$ and let $\chi:T \to K^\ast$ be an  algebraic character. Then
 $$(x \cdot_{B} \chi)^{w}={\rm Ad}(w)(x)\cdot_{B^{w^-1}} \chi^{w}.$$
\end{lemma}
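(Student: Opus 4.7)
The plan is to reduce the identity to a computation of derivatives on $\mathfrak t^*$ and then invoke the classical intertwining formula between the dot action and the ordinary $W$-action.

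First I would unwind the definitions. By construction $x \cdot_B \chi = \chi \cdot \psi_x$ for an algebraic character $\psi_x \in X^*(T)$ characterized by $d\psi_x = x \cdot_B(d\chi) - d\chi$, where the right-hand side uses the classical dot action $x \cdot_B \mu = x(\mu + \rho_B) - \rho_B$ on $\mathfrak{t}^*_K$. Similarly, $\mathrm{Ad}(w)(x) \cdot_{B^{w^{-1}}} \chi^w = \chi^w \cdot \psi'$ for some algebraic $\psi'$. Applying the ordinary $W$-action to the first equation, we see the left-hand side of the lemma is $\chi^w \cdot w(\psi_x)$, with $w(\psi_x) \in X^*(T)$. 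Since the algebraic character group $X^*(T)$ injects into $\mathfrak{t}^*_K$ via the derivative, it suffices to prove $w(d\psi_x) = d\psi'$, i.e.
\[
w\bigl(x\cdot_B (d\chi) - d\chi\bigr) \;=\; \mathrm{Ad}(w)(x)\cdot_{B^{w^{-1}}} w(d\chi) - w(d\chi) .
\]

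Next I would handle the Lie algebra identity. The Borel $B^{w^{-1}} = wBw^{-1}$ has positive roots equal to the $w$-translates of the positive roots of $B$, so $\rho_{B^{w^{-1}}} = w(\rho_B)$. Using linearity of the $W$-action on $\mathfrak{t}^*_K$ and the relation $(wxw^{-1})(w\mu) = w(x\mu)$, I would compute
\[
\mathrm{Ad}(w)(x)\cdot_{B^{w^{-1}}} w(d\chi) = (wxw^{-1})\bigl(w(d\chi) + w(\rho_B)\bigr) - w(\rho_B) = w\bigl(x(d\chi+\rho_B) - \rho_B\bigr) = w\bigl(x\cdot_B(d\chi)\bigr).
\]
Subtracting $w(d\chi)$ from both sides yields exactly the identity displayed above, so the two algebraic characters $w(\psi_x)$ and $\psi'$ coincide, and the lemma follows.

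The only real point of care is bookkeeping of conventions: that $\chi^w$ denotes the ordinary Weyl action with $d(\chi^w) = w(d\chi)$, that $\mathrm{Ad}(w)(x) = wxw^{-1}$ in $W$, and that the paper's convention $P^w = w^{-1}Pw$ gives $B^{w^{-1}} = wBw^{-1}$ with $\rho$-shift $w(\rho_B)$. Once these are aligned the argument is a one-line consequence of the classical fact that $w$ intertwines $x \cdot_B$ with $(wxw^{-1})\cdot_{wBw^{-1}}$ on $\mathfrak{t}^*$; so I do not expect any genuine obstacle beyond keeping the conventions straight.
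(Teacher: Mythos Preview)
Your proof is correct and follows essentially the same approach as the paper: both reduce to the identity $w(x\cdot_B \mu) = \mathrm{Ad}(w)(x)\cdot_{B^{w^{-1}}} w(\mu)$ on $\mathfrak t^\ast$, established via $\rho_{B^{w^{-1}}} = w(\rho_B)$, and then lift to locally analytic characters using that the algebraic correction term is determined by its derivative. The only organizational difference is that the paper first carries out the computation for algebraic $\chi$ in full and then bootstraps, whereas you pass to derivatives immediately; the content is the same.
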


\begin{proof}
%First let $\chi \in X^\ast(T)$ be an algebraic character. 
We compute
\begin{eqnarray*}
          (x \cdot_{B} \chi)^{w} & =  & w(x(\chi+\rho_{B})-\rho_{B}) \\
          & = & {{\rm Ad}(w)(x)}(w(\chi +\rho_{B})-w\rho_{B})\\
          & = & {{\rm Ad}(w)(x)}((\chi^{w} + \rho_{B^{w^{-1}}})-\rho_{B^{w^{-1}}}) \\
          & = & {{\rm Ad}(w)(x)}\cdot_{B^{w^{-1}}} \chi^{w} \;\; .
         \end{eqnarray*}
%If $\chi$ is arbitrary, then at least the above computations holds also true for its derivative $d\chi.$
%So let $x\cdot_B d\chi=d\chi + d\mu$ for  some algebraic character $\mu$, so that $x\cdot_B \chi=\chi\cdot \mu.$
%Then $(x\cdot_B d\chi)^w=d\chi^w + d\mu^w$. But $(x\cdot_B d\chi)^w={\rm Ad}(w)(x)\cdot_{B^{w^-1}} d\chi^{w}$
%by the first case. Hence ${\rm Ad}(w)(x)\cdot_{B^{w^-1}} \chi^{w}=\chi^w \cdot \mu^w=(\chi\cdot \mu)^w =(x\cdot_B \chi)^w.$
         \end{proof}

Let $M=M_{\overline{B}}(\chi)\in \cO^{\overline{\frb}}_\alg$ be a Verma module with respect to the opposite Borel subgroup $\overline{B}.$ Then 
$\cF^G_{\overline{B}}(M)=\Ind^G_{\overline{B}}(\chi^{-1})$ and 
$\overline{H}_0(U_B,\cF^G_{\overline{B}}(M))=(H^0(\fru_{\overline{B}},M)')^{w_0}$
by Lemma \ref{h0_verma_alle}. Hence there is a natural homomorphism 
$(\chi^{-1})^{w_0} \to \overline{H}_0(U_B,\cF^G_{\overline{B}}(M)).$ If further $\chi$ is $\overline{B}$-dominant, then we have
moreover a natural homomorphism 
$$((w_0\cdot_{\overline{B}} \chi)^{-1})^{w_0} \to \overline{H}_0(U_B,\cF^G_{\overline{B}}(M)).$$
These maps lead by composing with the functor $V \mapsto V_{U_B}=\overline{H}_0(U_B,V)$ to the following statements.

\begin{thm}
  Let $\chi$ be a  $\overline{B}$-dominant algebraic character.
 Then for any $w\in W$ and any highest weight module $M\in \cO^{\frb^w}_\alg$ one has the identity
 $$\Hom_G(\Ind^G_{\overline{B}}(\chi^{-1}),\cF^G_{B^w}(M))=
 \Hom_T(((w_0\cdot_{\overline{B}}\chi)^{-1})^{w_0},\cF^G_{B^w}(M)_{U_B})$$
\end{thm}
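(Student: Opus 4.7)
The plan is to apply the naive topological Jacquet functor $V \mapsto V_{U_B} = \overline{H}_0(U_B, V)$ to a $G$-intertwiner and exploit the explicit description of Jacquet modules of Verma inductions from Proposition \ref{h0_verma_alle}, together with the reduction $\Hom_{\cO}(M, M(\chi)) = \Hom_{\cO}(M_v, M(\chi))$ from Lemma \ref{Verma_Quotient}. After reducing to indecomposable $M$, I would begin by identifying $I^G_{\overline{B}}(\chi^{-1}) = \cF^G_{\overline{B}}(\uM_{\overline{B}}(\chi))$, apply Proposition \ref{h0_verma_alle} with $P = B$ and $w = w_0$ to describe $I^G_{\overline{B}}(\chi^{-1})_{U_B}$, and then use Lemma \ref{Lemma_cdot} to pin down inside it the canonical subspace isomorphic to $((w_0 \cdot_{\overline{B}} \chi)^{-1})^{w_0}$ that arises from the $\overline{B}$-dominance of $\chi$.

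To define the forward map, given $f \in \Hom_G(I^G_{\overline{B}}(\chi^{-1}), \cF^G_{B^w}(M))$, I would form $f_{U_B}$ (using right exactness of the naive Jacquet functor) and precompose with the inclusion $((w_0 \cdot_{\overline{B}} \chi)^{-1})^{w_0} \hookrightarrow I^G_{\overline{B}}(\chi^{-1})_{U_B}$. The key claim is that the resulting map factors through the subspace $\cF^G_{B^w}(M_v)_{U_B} \subset \cF^G_{B^w}(M)_{U_B}$, coming from the embedding $\cF^G_{B^w}(M_v) \hookrightarrow \cF^G_{B^w}(M)$ induced by the quotient $M \twoheadrightarrow M_v$ (which is a well-defined embedding by exactness of $\cF^G_{B^w}$, see Proposition \ref{exact_in_both}). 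For the inverse map, I would reduce to the Verma case $M = M_v = \uM_{B^w}(\mu)$ via Lemma \ref{Verma_Quotient} and Theorem \ref{fullyfaithful}, and then invoke Proposition \ref{Hom_twisted_Verma} to identify $\Hom_G(I^G_{\overline{B}}(\chi^{-1}), I^G_{B^w}(\mu^{-1}))$ with a space of $\frg$-module maps between twisted Verma modules, matching (via Lemma \ref{Lemma_cdot}) the RHS of the theorem.

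The hardest step will be the factoring claim in the forward direction: verifying that the image of $f_{U_B}$ restricted to $((w_0 \cdot_{\overline{B}} \chi)^{-1})^{w_0}$ actually lies in $\cF^G_{B^w}(M_v)_{U_B}$ and not merely in the larger space $\cF^G_{B^w}(M)_{U_B}$. This requires examining, via Proposition \ref{h0_verma_alle} applied to the JH-subquotients of $M$ in $\cO^{B^w}$, which characters can occur in the $U_B$-coinvariants of $\cF^G_{B^w}(N)$ for $N$ a non-Verma subquotient, and using the $\overline{B}$-dominance of $\chi$ to exclude all but the Verma-quotient contribution.
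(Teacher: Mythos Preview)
Your toolkit is the paper's toolkit (Propositions \ref{h0_verma_alle} and \ref{Hom_twisted_Verma}, Lemmas \ref{Lemma_cdot} and \ref{Verma_Quotient}), and the forward map you describe is exactly the map the paper has in mind. But the paper does \emph{not} attempt to prove your ``factoring claim'' directly, and I think your proposed argument for it has a gap. You want to analyze, via Proposition \ref{h0_verma_alle}, the characters occurring in $\cF^G_{B^w}(N)_{U_B}$ for non-Verma subquotients $N$ of $M$; but that proposition only computes the Jacquet module when $N$ is a Verma module or a simple module with $B^w$ maximal, not for arbitrary (sub)quotients, and the naive Jacquet functor is only right exact, so you cannot assemble $\cF^G_{B^w}(M)_{U_B}$ from the pieces. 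In the same vein, the inclusion $\cF^G_{B^w}(M_v)_{U_B} \subset \cF^G_{B^w}(M)_{U_B}$ that you invoke is not obvious: the embedding of representations induces a map on coinvariants, but injectivity there needs an argument.

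The paper sidesteps all of this by a dimension count, stratified into four cases. For $M=\uM_{B^w}(\lambda)$ a Verma module it shows both sides are at most one-dimensional and then, using Proposition \ref{Hom_twisted_Verma} for the left side and Proposition \ref{h0_verma_alle} for the right, checks with Lemma \ref{Lemma_cdot} that the non-vanishing conditions match. For $M$ a proper quotient of a Verma module (so $M_v=0$) it writes the obvious commutative square comparing with the Verma case; the top row is an isomorphism and the left vertical is injective, hence the bottom row is injective, and since the right-hand side is zero the left-hand side must be too. The remaining two cases ($M=\uM_{B^w}(W)$ for a finite-dimensional $W$, and quotients thereof) are reduced to the first two via Lemma \ref{Verma_Quotient}, after translating the left-hand side to the Lie-algebra level through Proposition \ref{Hom_twisted_Verma}. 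Note that Theorem \ref{fullyfaithful}, which you invoke, is stated for a single parabolic $P$; the relevant bridge between the two different Borels $\overline{B}$ and $B^w$ is Proposition \ref{Hom_twisted_Verma}.
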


\begin{proof}
\begin{comment}
i) By Corollary \ref{dim1} the left hand side is at most one-dimensional and does not vanish if and only if 
$\lambda \uparrow_{\overline{B}} \chi.$
As for the right hand side we know by Proposition \ref{h0_verma_alle} that $I^G_{\overline{B}}(\lambda^{-1})_U=
(H^0(\overline{U},M_{\overline{B}}(\lambda))')^{w_0}.$ 
Hence the right hand side is one-dimensional (???) and does not vanish if and only if $w_0\cdot_{\overline{B}} \chi \uparrow_{\overline{B}} \lambda.$
Since $\chi$ is $\overline{B}$-dominant this is equivalent to the condition $\lambda \uparrow_{\overline{B}} \chi.$

ii) Since $\chi$ is $\overline{B}$-dominant we see
that $\chi^{w_0}$ is $B$-dominant. The left hand side coincides by Frobenius reciprocity with $\Hom_T(I^G_{\overline{B}}(\chi^{-1})_U,\lambda^{-1}).$
By Proposition \ref{h0_verma_alle} it does not vanish if and only if $\lambda^{w_0} \uparrow_{\overline{B}} \chi.$
But this is equivalent to $\lambda \uparrow_{B} \chi^{w_0}$ by Lemma \ref{Lemma_cdot}. Since 
$\chi^{w_0}$ is $B$-dominant the latter condition is equivalent to 
$w_0\cdot_B (\chi^{w_0}) \uparrow_{B} \lambda$. 
But $w_0\cdot_B (\chi^{w_0})=(w_0\cdot_{\overline{B}} \chi)^{w_0}$ by Lemma \ref{Lemma_cdot} and  
the RHS does not vanish iff $(w_0\cdot_{\overline{B}}\chi)^{w_0} \uparrow_{B} \lambda.$ 
\end{comment}

\vskip8pt
i) First let $M=M_{B^w}(\lambda)$ be a Verma module for some algebraic character $\lambda$ of $T.$
We start with the observation that both sides are at most one-dimensional. Indeed as for the LHS this
follows from Proposition \ref{Hom_twisted_Verma}. 
As for the RHS we can identify it (see below) with the anti-dominant eigenspace in  $H^0(\fru_{B^w},M)$.
This eigenspace is one-dimensional, as well.

\smallskip
Now we check, that the LHS does not vanish iff the RHS does.
Since $\chi$ is $\overline{B}$-dominant we see
that $\chi^{w^{-1}w_0}$ is $B^w$-dominant. The LHS does not vanish by Proposition 
\ref{Hom_twisted_Verma} if and only if 
$\lambda^{w_0w}  \uparrow_{\overline{B}} \chi.$
This is equivalent to $\lambda \uparrow_{B^w} \chi^{w^{-1}w_0}$ by Lemma \ref{Lemma_cdot}.
Since 
$\chi^{w^{-1}w_0}$ is $B^w$-dominant and $w^{-1}w_0w$ is the longest Weyl group element in $W$ with respect to
$B^w$, we see that the latter condition is equivalent to 
$(w^{-1}w_0w) \cdot_{B^w} (\chi^{w^{-1}w_0}) \uparrow_{B^w} \lambda$.

On the other hand, the Jacquet module $\Ind^G_{B^w}(\lambda^{-1})_U$ coincides by Proposition \ref{h0_verma_alle} with $(H^0(\fru_{B^w},M)')^w.$ Its weights are given by the characters $(\mu^{-1})^{w}$ with $\mu \uparrow_{B^w} \lambda.$
Moreover, $(w_0\cdot_{\overline{B}} \chi)^{w_0}= ww^{-1}w_0(w_0\cdot_{\overline{B}} \chi)=
w(w^{-1}w_0w\cdot_{B^w} \chi^{w^{-1}w_0})$ by Lemma \ref{Lemma_cdot}. Thus
the RHS does not vanish iff the LHS does not vanish. 
To see that the natural map between these one-dimensional spaces is an isomorphism follows in principal from Theorem \ref{fullyfaithful} .

\vskip8pt
ii) Now let $M$ be a quotient of $M_{B^w}(\lambda).$ Then $\cF^G_{B^w}(M)\sub \Ind^G_{B^w}(\lambda^{-1})$ so that
both vector spaces in the above stated formula are at most one-dimensional. Moreover, we have a commutative diagram

\begin{eqnarray*}
\Hom_G(\Ind^G_{\overline{B}}(\chi^{-1}),\Ind^G_{B^w}(\lambda^{-1})) & =  &
 \Hom_T(((w_0\cdot_{\overline{B}}\chi)^{-1})^{w_0},\Ind^G_{B^w}(\lambda^{-1})_{U_B}) \\
  \uparrow & & \uparrow \\  
  \Hom_G(\Ind^G_{\overline{B}}(\chi^{-1}),\cF^G_{B^w}(M)) & \to &
 \Hom_T(((w_0\cdot_{\overline{B}}\chi)^{-1})^{w_0},\cF^G_{B^w}(M)_{U_B})
\end{eqnarray*}
The upper line is an isomorphism by the first case. The LHS is an injection. In particular the lower line
is an injection, as well. Since the spaces in question are at most one-dimensional the statement follows
easily in this case. Note that if $M$ is a proper quotient of $M_{B^w}(\lambda)$, then the objects
in the lower line vanishes and the claim is trivial. 

\begin{comment}

\vskip8pt
iii) Let $W$ be a finite-dimensional $B$-representation and $M=M_{B^w}(W).$ We may
suppose that $M$ is indecomposable. Here we proceed as in Step i).
The left hand side coincides by Lemma \ref{Verma_Quotient} 
with $\Hom_G(I^G_{\overline{B}}(\chi^{-1}),\cF^G_{B^w}(M_v)).$
Then we proceed as in Step 1).

\vskip8pt
iv) Let $M$ be a quotient of some $M_{B^w}(W).$ Then we proceed as in Step ii).

\end{comment}
\end{proof}

\begin{rmk}
 In \cite{Br} and \cite{BC} are presented adjunction formulas which use on the RHS Emerton Jacquet functor
 and which have a different style.
\end{rmk}

\bibliographystyle{plain}
\bibliography{functorial_new_bib}

\def\cprime{$'$}
\begin{thebibliography}{10}

\bibitem{BC}
John Bergdall and Przemyslaw Chojecki.
\newblock An adjunction formula for the {E}merton-{J}acquet functor.
\newblock Preprint, to appear in Israel J. Math. 2015.

\bibitem{Br}
Christophe Breuil.
\newblock Vers le socle localement analytique pour {${\rm GL}_n$} {I}.
\newblock {\em Annales de l'Institut Fourier}, 66:633--685, 2016.

\bibitem{Cas}
W.~Casselman.
\newblock {\em preprint}, Introduction to admissible representations of p-adic
  groups, 1974.

\bibitem{Ca2}
W.~Casselman.
\newblock Jacquet modules for real reductive groups.
\newblock {\em Proceedings of the ICM in Helsinki}, pages 557--563, 1978.

\bibitem{Da}
Jean~Fran{\c{c}}ois Dat.
\newblock Espaces sym\'etriques de {D}rinfeld et correspondance de {L}anglands
  locale.
\newblock {\em Ann. Sci. \'Ecole Norm. Sup. (4)}, 39(1):1--74, 2006.

\bibitem{Em1}
M.~Emerton.
\newblock Locally analytic vectors in representations of locally $p$-adic
  analytic groups.
\newblock {M}emoirs of the {A}{M}{S}., 2017.

\bibitem{F}
Christian~Tobias F{\'e}aux~de Lacroix.
\newblock Einige {R}esultate \"uber die topologischen {D}arstellungen
  {$p$}-adischer {L}iegruppen auf unendlich dimensionalen {V}ektorr\"aumen
  \"uber einem {$p$}-adischen {K}\"orper.
\newblock In {\em Schriftenreihe des {M}athematischen {I}nstituts der
  {U}niversit\"at {M}\"unster. 3. {S}erie, {H}eft 23}, volume~23 of {\em
  Schriftenreihe Math. Inst. Univ. M\"unster 3. Ser.}, pages x+111. Univ.
  M\"unster, M\"unster, 1999.

\bibitem{HT2}
Henryk Hecht and Joseph~L. Taylor.
\newblock A comparison theorem for {$\mathfrak n$}-homology.
\newblock {\em Compositio Math.}, 86(2):189--207, 1993.

\bibitem{HT1}
Henryk Hecht and Joseph~L. Taylor.
\newblock A remark on {C}asselman's comparison theorem.
\newblock In {\em Geometry and representation theory of real and {$p$}-adic
  groups ({C}\'ordoba, 1995)}, volume 158 of {\em Progr. Math.}, pages
  139--146. Birkh\"auser Boston, Boston, MA, 1998.

\bibitem{H2}
James~E. Humphreys.
\newblock {\em Introduction to {L}ie algebras and representation theory},
  volume~9 of {\em Graduate Texts in Mathematics}.
\newblock Springer-Verlag, New York, 1978.
\newblock Second printing, revised.

\bibitem{H1}
James~E. Humphreys.
\newblock {\em Representations of semisimple {L}ie algebras in the {BGG}
  category {$\mathscr{O}$}}, volume~94 of {\em Graduate Studies in
  Mathematics}.
\newblock American Mathematical Society, Providence, RI, 2008.

\bibitem{Ja}
Jens~Carsten Jantzen.
\newblock {\em Representations of algebraic groups}, volume 107 of {\em
  Mathematical Surveys and Monographs}.
\newblock American Mathematical Society, Providence, RI, second edition, 2003.

\bibitem{K1}
Jan Kohlhaase.
\newblock Invariant distributions on {$p$}-adic analytic groups.
\newblock {\em Duke Math. J.}, 137(1):19--62, 2007.

\bibitem{K2}
Jan Kohlhaase.
\newblock The cohomology of locally analytic representations.
\newblock {\em J. Reine Angew. Math.}, 651:187--240, 2011.

\bibitem{Mo1}
Yasuo Morita.
\newblock A {$p$}-adic theory of hyperfunctions. {I}.
\newblock {\em Publ. Res. Inst. Math. Sci.}, 17(1):1--24, 1981.

\bibitem{Mo2}
Yasuo Morita.
\newblock Analytic representations of {${\rm SL}_2$} over a {${\mathfrak
  p}$}-adic number field. {II}.
\newblock In {\em Automorphic forms of several variables ({K}atata, 1983)},
  volume~46 of {\em Progr. Math.}, pages 282--297. Birkh\"auser Boston, Boston,
  MA, 1984.

\bibitem{Mo3}
Yasuo Morita.
\newblock Analytic representations of {${\rm SL}\sb 2$} over a {${\mathfrak
  p}$}-adic number field. {III}.
\newblock In {\em Automorphic forms and number theory ({S}endai, 1983)},
  volume~7 of {\em Adv. Stud. Pure Math.}, pages 185--222. North-Holland,
  Amsterdam, 1985.

\bibitem{Mo4}
Yasuo Morita.
\newblock A {$p$}-adic theory of hyperfunctions. {II}.
\newblock In {\em Algebraic analysis, {V}ol.\ {I}}, pages 457--472. Academic
  Press, Boston, MA, 1988.

\bibitem{O1}
Sascha Orlik.
\newblock On extensions of generalized {S}teinberg representations.
\newblock {\em J. Algebra}, 293(2):611--630, 2005.

\bibitem{OSch}
Sascha Orlik and Benjamin Schraen.
\newblock The {J}ordan-{H}\"older series of the locally analytic {S}teinberg
  representation.
\newblock {\em Doc. Math.}, 19:647--671, 2014.

\bibitem{OS3}
Sascha Orlik and Matthias Strauch.
\newblock Category $\mathcal{O}$ and locally analytic representations.
\newblock {\em Preprint}, 2015.

\bibitem{OS2}
Sascha Orlik and Matthias Strauch.
\newblock On {J}ordan-{H}\"older series of some locally analytic
  representations.
\newblock {\em J. Amer. Math. Soc.}, 28(1):99--157, 2015.

\bibitem{ST4}
P.~Schneider and J.~Teitelbaum.
\newblock {$U({\mathfrak g})$}-finite locally analytic representations.
\newblock {\em Represent. Theory}, 5:111--128 (electronic), 2001.
\newblock With an appendix by Dipendra Prasad.

\bibitem{S1}
Peter Schneider.
\newblock {\em Nonarchimedean functional analysis}.
\newblock Springer Monographs in Mathematics. Springer-Verlag, Berlin, 2002.

\bibitem{ST1}
Peter Schneider and Jeremy Teitelbaum.
\newblock Locally analytic distributions and {$p$}-adic representation theory,
  with applications to {${\rm GL}\sb 2$}.
\newblock {\em J. Amer. Math. Soc.}, 15(2):443--468 (electronic), 2002.

\bibitem{ST2}
Peter Schneider and Jeremy Teitelbaum.
\newblock Algebras of {$p$}-adic distributions and admissible representations.
\newblock {\em Invent. Math.}, 153(1):145--196, 2003.

\end{thebibliography}

\end{document}